 \newcommand{\david}[1]{}
\newcommand{\marg}[1]{}
\newcommand{\note}[1]{}
\newcommand{\defi}[1]{\textsf{#1}} 
\newcommand{\kbar}{{\overline{k}}}
\newcommand{\mm}{{\mathfrak m}}
\newcommand{\calC}{{\mathcal C}}
\newcommand{\calF}{{\mathcal F}}
\newcommand{\calM}{{\mathcal M}}
\newcommand{\calO}{{\mathcal O}}
\newcommand{\calS}{{\mathcal S}}
\newcommand{\calV}{{\mathcal V}}
\def\P{\mathbb{P}}
\def\A{\mathbb{A}}
 \DeclareMathOperator{\Hom}{Hom}
\DeclareMathOperator{\Spec}{Spec}
\DeclareMathOperator{\Mod}{Mod}
\DeclareMathOperator{\Open}{Open}
\DeclareMathOperator{\Coh}{Coh}
\DeclareMathOperator{\id}{id}
\DeclareMathOperator{\Th}{th}
\newcommand{\Sch}{\operatorname{\bf Sch}}
\newcommand{\Ab}{\operatorname{Ab}}
\newcommand{\op}{{\operatorname{op}}}
\newcommand{\an}{{\operatorname{an}}}
\DeclareMathOperator{\cosk}{cosk}
\DeclareMathOperator{\sk}{sk}
\DeclareMathOperator{\Simp}{Simp}
\DeclareMathOperator{\AN}{AN} 
\DeclareMathOperator{\Spf}{Spf}
\DeclareMathOperator{\Isoc}{Isoc}
\DeclareMathOperator{\QCoh}{QCoh}
\DeclareMathOperator{\fp}{fp}
\DeclareMathOperator{\g}{g}
\DeclareMathOperator{\Cris}{Cris}
\DeclareMathOperator{\Tot}{Tot}
\DeclareMathOperator{\Mor}{Mor}
\DeclareMathOperator{\Ob}{Ob}
\DeclareMathOperator{\Cosimp}{Cosimp}
\DeclareMathOperator{\ch}{ch}
\DeclareMathOperator{\Ch}{Ch}
\numberwithin{equation}{subsection}
\newtheorem{theorem}[subsection]{Theorem}
\newtheorem{lemma}[subsection]{Lemma}
\newtheorem{corollary}[subsection]{Corollary}
\newtheorem{proposition}[subsection]{Proposition}
\theoremstyle{definition}
\newtheorem{definition}[subsection]{Definition}
\newtheorem{example}[subsection]{Example}
\theoremstyle{remark}
\newtheorem{remark}[subsection]{Remark}
\newcommand{\labelpar}[1]{\refstepcounter{subsection}\label{#1}\thesubsection.}
\newwrite\refs
\renewcommand\@setref[3]{%
        \ifx#1\relax
                \write\refs{'#3' \thepage\space undefined}%
                \protect \G@refundefinedtrue
                \nfss@text{\reset@font\bfseries ??}%
                \@latex@warning{Reference `#3' on page \thepage\space
                                undefined}%
        \else
                \write\refs{'#3' \thepage\space
                            \expandafter\@secondoftwo#1}%
                \expandafter#2#1\null
        \fi
}
\begin{document}

\title[Cohomological Descent on the Overconvergent site]{Cohomological Descent on the Overconvergent site}

\author{David Zureick-Brown}
\address{Dept. of Mathematics and Computer science, Emory University,
Atlanta, GA 30322 USA}
\thanks{This work was partially supported by a National Defense Science and
Engineering Graduate Fellowship and by a National Security Agency Young Investigator grant.}
\urladdr{http://mathcs.emory.edu/\~{}dzb/}
\date{\today}

\begin{abstract}
We prove that cohomological descent holds for finitely presented crystals on the overconvergent site
with respect to proper or fppf hypercovers.
\end{abstract}

\maketitle


\section{Introduction}
\label{S:introduction}

Cohomological descent is a robust computational and theoretical tool, central to $p$-adic cohomology and its applications. On one hand, it facilitates explicit calculations (analogous to the computation of coherent cohomology in scheme theory via \v{C}ech cohomology); on another, it allows one to deduce results about singular schemes (e.g., finiteness of the cohomology of overconvergent isocrystals on singular schemes \cite{kedlaya:finitenessCoefficients}) from results about smooth schemes, and, in a pinch, sometimes allows one to bootstrap global definitions from local ones (for example, for a scheme $X$ which fails to embed into a formal scheme smooth near $X$, one actually \emph{defines} rigid cohomology via cohomological descent; see \cite{leStum:rigidBook}*{comment after Proposition 8.2.17}).
\vspace{4pt}

The main result of the series of papers \cite{ChiarellottoT:etaleRigidDescent, Tsuzuki:properRigidDescent, Tsuzuki:rigidDescent} is that cohomological descent for the rigid cohomology of overconvergent isocrystals holds with respect to both flat and proper hypercovers. The barrage of choices in the definition of rigid cohomology is burdensome and makes their proofs of cohomological descent very difficult, totaling to over 200 pages. Even after the main cohomological descent theorems \cite{ChiarellottoT:etaleRigidDescent}*{Theorems 7.3.1 and 7.4.1} are proved one still has to work a bit to get a spectral sequence \cite{ChiarellottoT:etaleRigidDescent}*{Theorem 11.7.1}. Actually, even to state what one means by cohomological descent (without a site) is subtle.
\vspace{5pt}

The situation is now more favorable. More than 25 years after Berthelot's seminal papers \cite{Berthelot:rigidFirst, berthelot:finitude, berthelot:dualite}, key foundational aspects have now been worked out. Le Stum's recent advance \cite{leStum:site} is the construction of an `overconvergent site' \cite{leStum:site} which gives an alternative, equivalent definition of rigid cohomology as the cohomology of the structure sheaf of a ringed site $(X_{\text{AN}^{\dagger}}, \calO_X^{\dagger})$ (and also of course an equivalence between the category of overconvergent isocrystals on $X$ and the category of finitely presented $\calO_{X}^{\dagger}$-modules). This formalism is the correct setting for many problems; for instance, \cite{leStum:constructibleNabla} uses the overconvergent site to develop a theory of constructible $\nabla$-modules on curves.
\vspace{4pt}

More applications are expected. And indeed, the main result of this paper
is the application of the abstract machinery of \cite{SGA4:II}*{Expos\'e Vbis and VI} to the overconvergent site to give a short proof of the following.

\begin{theorem}
\label{T:mainCohDescentTheoremPrelude}

Cohomological descent for locally finitely presented modules on the overconvergent site holds with respect to
\begin{itemize}
\item [(i)] fppf hypercovers of varieties, and

\item [(ii)] proper hypercovers of varieties.
\end{itemize}

\end{theorem}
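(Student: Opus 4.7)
The strategy is to apply the abstract descent machinery of SGA 4 Expos\'es Vbis and VI to the ringed overconvergent site $(\An^\dagger_X, \calO_X^\dagger)$ of \cite{leStum:site}. The relevant principle is that, given a class of morphisms of topoi that is stable under base change and of (effective) cohomological descent, any hypercover whose face maps lie in this class automatically yields cohomological descent. The problem therefore reduces to showing that for a single fppf surjection (respectively a single proper surjection) $a: Y \to X$ of varieties, the induced morphism of topoi $a_{\An}: \An^\dagger_Y \to \An^\dagger_X$ is of universal effective cohomological descent on locally finitely presented $\calO^\dagger$-modules -- that is, that for every base change $T \to X$ the adjunction unit $\id \to R(a_T)_* (a_T)^*$ is an isomorphism on the appropriate category.

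For part (i), I would verify directly from the definitions in \cite{leStum:site} that fppf surjections of varieties are already covering morphisms in the overconvergent topology. Once this is established, an fppf hypercover is a hypercover in the site, and the general hypercover-in-a-site principle of SGA 4 Vbis yields cohomological descent for any abelian sheaf of $\calO^\dagger$-modules. Restricting the conclusion to the subcategory of locally finitely presented modules is then essentially formal, since fppf descent data for a finitely presented module defines a finitely presented module.

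For part (ii), proper morphisms are generally not covers in the overconvergent topology, so genuine work is required. My plan is to verify universal effective cohomological descent for a single proper surjection $a: Y \to X$ directly: invoke the equivalence of \cite{leStum:site} between locally finitely presented $\calO_X^\dagger$-modules and overconvergent isocrystals on $X$, translate the statement $\calF \isomto R a_{\An*} a_{\An}^* \calF$ (and its base changes) into a statement about rigid cohomology, and then import the classical proper base change theorem for overconvergent isocrystals. The main technical obstacle is precisely this translation step; the fppf case, by contrast, is essentially automatic once the topology of $\An^\dagger$ is correctly understood. Granted proper base change for a single proper surjection, the SGA 4 formalism handles the bookkeeping needed to promote it to cohomological descent for an arbitrary proper hypercover, and similarly produces the associated spectral sequence as a formal consequence.
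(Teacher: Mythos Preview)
Your plan for part (i) rests on a false premise: fppf surjections of varieties are \emph{not} covering morphisms in the overconvergent site. The topology on $\AN^{\dagger}(\calV)$ (Definition \ref{D:overconvergentSite}) is generated by families $\{(X,V_i)\to(X,V)\}$ where the map on schemes is the identity and the $V_i$ are analytic opens of $V$ covering a neighborhood of the tube. A morphism of varieties $Y\to X$ does not produce such a covering; indeed the paper remarks explicitly (Remark~1.2) that even \'etale surjections are not coverings and that \v{C}ech theory does not apply. So the sentence ``an fppf hypercover is a hypercover in the site'' is simply wrong, and the SGA~4 machinery cannot be invoked off the shelf. The paper instead works hard: it reduces to monogenic finite locally free maps, constructs compatible embeddings into formal schemes, applies Raynaud--Gruson flattening to produce a finite flat morphism of formal schemes, and then proves a hands-on finite descent result (Proposition~\ref{P:finiteDescent}) at the level of overconvergent varieties using Berkovich's flat-quasi-finite acyclicity.

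Your plan for part (ii) has a different problem: it is essentially circular relative to the paper's aims. The ``classical proper base change theorem for overconvergent isocrystals'' you want to import is, in the form you need, exactly the content of the Chiarellotto--Tsuzuki papers that the present paper is written to bypass. Translating $\calF\to \mathbb{R}a_{\An*}a_{\An}^*\calF$ back through le Stum's comparison to rigid cohomology and then citing those 200 pages defeats the purpose; and there is no shorter off-the-shelf proper base change statement that does the job. The paper's actual route is to reduce a proper surjection, via Chow's lemma and Raynaud--Gruson flattening, to a composition of a modification and a flat map, handle modifications by an explicit blow-up argument (Lemma~\ref{L:CDblowUp} and Proposition~\ref{P:CDmodification}), and handle the flat map by part (i). That reduction is where the new content lies.
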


\begin{remark}
The proof of Theorem \ref{T:mainCohDescentTheoremPrelude} is not merely a formal consequence of the techniques of \cite{SGA4:II}*{Expos\'e Vbis and VI}. Cohomological descent for abelian sheaves on the \'etale site with respect to smooth hypercovers is simply \v{C}ech theory; see Theorem \ref{T:CDexamples} (i). In the overconvergent setting, an \'etale surjection is not a covering, and hence \v{C}ech theory does not apply. Another technical difficulty is that one cannot check triviality of an overconvergent sheaf $\calF \in \AN^{\dagger} X$ by restricting to points of $X$, so that the template of the proof of proper cohomological descent for \'etale cohomology therefore does not apply to overconvergent cohomology and a new argument is needed.
\end{remark}

\begin{remark}
We emphasize that, while similar looking results appear in the literature (see e.g. Section \ref{S:etaleCohDescent} and Lemma \ref{L:CDblowUp}), it takes additional work to deduce from these corresponding theorems on the overconvergent site. Moreover, new ideas -- for instance, the use of Raynaud-Gruson's theorem on `flattening stratifications' \cite{GrusonR:flatification}*{Th\'eor\`em 5.2.2}, and le Stum's main theorems (Proposition \ref{P:coverings}) -- greatly simplify and extend the generality of our proof.  
\end{remark}

Finally, in light of the central role that one expects le Stum's work to play in the future development of rigid cohomology, we note that various ingredients of our proofs are useful lemmas which facilitate computations on the overconvergent site; see for instance Lemma \ref{L:tubeIsoDescent}.

\subsection{Applications}
\label{SS:applications}

We highlight a few direct applications of our main theorem.

\begin{enumerate}

\item (Spectral sequence.) By le Stum's comparison theorems between rigid and overconvergent cohomology \cite{leStum:site}*{Corollary 3.5.9}, we obtain a spectral sequence (see Remark \ref{R:augmentedCohomology}) computing rigid cohomology; this gives a shorter proof of Theorem 11.7.1 of \cite{ChiarellottoT:etaleRigidDescent}. While this corollary of our work and \cite{ChiarellottoT:etaleRigidDescent, Tsuzuki:properRigidDescent} are similar, the main results are independent and cannot be deduced from one another. 

\item (Overconvergent de Rham-Witt cohomology.) \cite{DavisLZ:Rham}  proves directly that overconvergent de Rham-Witt cohomology agrees with classical rigid cohomology for smooth affine varieties, and a long argument with dagger spaces is needed to deduce agreement with general rigid cohomology. Use of the overconvergent site and  Theorem \ref{T:mainCohDescentTheoremPrelude} simplifies the globalization argument (this will appear in future work \cite{MeD:RhamWittStacks}).

\item (Rigid cohomology for stacks.) Motivated by applications to geometric Langlands, Kedlaya proposed the problem of generalizing rigid cohomology to stacks. There are three approaches -- le Stum's site gives a direct approach realized in the author's thesis \cite{Brown:RigidStacks}); \cite{DavisLZ:Rham} overconvergent de Rham-Witt complex gives an alternative, explicit and direct construction. Theorem \ref{T:mainCohDescentTheoremPrelude} gives a third approach and a direct comparison of the first two approaches; moreover theorem \ref{T:mainCohDescentTheoremPrelude}  also gives a direct proof that the rigid cohomology of a stack is finite dimensional, and allows one to make various constructions (e.g., to define a Gysin map).

\end{enumerate}

\subsection{Acknowledgements}
\label{SS:acknowledgements}

We would like to thank Bjorn Poonen, Brian Conrad, Arthur Ogus, Bernard le Stum, Bruno Chiarellotto, and Anton Geraschenko for many useful conversations and encouragement. We also remark that the \'etale case was part of the author's thesis \cite{Brown:RigidStacks}.

\subsection{Organization of the paper}
\label{SS:organization}

This paper is organized as follows. In section \ref{S:conventions} we recall notation. In section \ref{S:background} we review the machinery of cohomological descent. In section \ref{S:definitions} we recall the construction of the overconvergent site of \cite{leStum:site}. In section \ref{S:overCohDescent} we prove Theorem \ref{T:mainCohDescentTheoremPrelude}, first in the case of Zariski hypercovers, then in the case of fppf hypercovers, and finally for proper hypercovers.

\section{Notation and conventions}
\label{S:conventions}

Throughout $K$ will denote a field of characteristic 0 that is complete with respect to a non-trivial non-archimedean valuation with valuation ring $\calV$, whose maximal ideal and residue field we denote by $\mm$ and $k$. We denote the category of schemes over $k$ by $\Sch_k$. We define an \defi{algebraic variety over} $k$ to be a scheme such that there exists a \emph{locally finite} cover by schemes of finite type over $k$ (recall that a collection $\calS$ of subsets of a topological space $X$ is said to be locally finite if every point of $X$ has a neighborhood which only intersects finitely many subsets $X \in \calS$). Note that we do not require an algebraic variety to be reduced, quasi-compact, or separated.
\vspace{3pt}

\textbf{Formal Schemes}: As in \cite{leStum:site}*{1.1} we define a formal $\calV$-scheme to be a locally topologically finitely presented formal scheme $P$ over $\calV$, i.e., a formal scheme $P$ with a locally finite covering by formal affine schemes $\Spf A$, with $A$ topologically of finite type (i.e., a quotient of the ring $\calV\{T_1,\cdots,T_n\}$ of convergent power series by an ideal $I + \mathfrak{a}\calV\{T_1,\cdots,T_n\}$, with $I$ an ideal of $\calV\{T_1,\cdots,T_n\}$ of finite type and $\mathfrak{a}$ an ideal of $\calV$). This finiteness property is necessary to define the `generic fiber' of a formal scheme (see \cite{Berkovich:vanishingFormalI}*{Section 1}).

We refer to \cite{EGAI}*{1.10} for basic properties of formal schemes. The first section of \cite{Berkovich:contractiblity} is another good reference; a short alternative is \cite{leStum:site}*{Section 1}, which contains everything we will need.
\vspace{3pt}

\textbf{$K$-analytic spaces}: We refer to \cite{Berkovich:nonArchEtaleCoh} (as well as the brief discussion in \cite{leStum:site}*{4.2}) for definitions regarding $K$-analytic spaces. As in \cite{leStum:site}*{4.2}, we define an \defi{analytic variety} over $K$ to be a locally Hausdorff topological space $V$
together with a maximal affinoid atlas $\tau$ which is locally defined by \emph{strictly} affinoid algebras (i.e., an algebra $A$ is strict if it is a quotient of a Tate algebra $K\{T_1,\cdots, T_n\}$) and denote by $\calM(A) $ the \defi{Gelfand spectrum} of an affinoid algebra $A$. Moreover, recall that a $K$-analytic space is said to be \defi{good} if every point has an open affinoid neighborhood.
\vspace{3pt}

\textbf{Topoi}: We follow the conventions of \cite{SGA4:I} (exposited in \cite{leStum:site}*{4.1}) regarding sites, topologies, topoi, and localization. When there is no confusion we will identify an object $X$ of a category with its associated presheaf $h_X\colon Y \mapsto \Hom(Y,X)$. For an object $X$ of category $C$ we denote by $C_{/X}$ the \defi{comma category}; objects of $C_{/X}$ are morphisms $Y \to X$, and morphisms are commutative diagrams. For a topos $T$ we denote by $\mathbb{D}_+(T)$ the derived category of bounded below complexes of objects of $\Ab T$. Often (in this paper) a morphism $(f^{-1},f_*)\colon (T,\calO_T) \to (T',\calO_{T'})$ of ringed topoi will satisfy $f^{-1}\calO_{T'} = \calO_T$, so that there is no distinction between the functors $f^{-1}$ and $f^*$; in this case, we will write $f^*$ for both. Finally, we note that the category $\Mod_{\fp} \calO_T$ of $\calO_T$-modules which locally admit a finite presentation $\bigoplus_{i = 1}^n \calO_T \to \bigoplus_{i = 1}^m \calO_T \to M$, is generally \emph{larger} than $\Coh \calO_T$, since in general the sheaf of rings $\calO_T$ is not itself coherent.

\section{Background on cohomological descent}
\label{S:background}

Here we recall the definitions and facts about cohomological descent that we will need. The standard reference is \cite{SGA4:II}*{Expos\'e Vbis and VI}; some alternatives are Deligne's paper \cite{Deligne:Hodge3} and Brian Conrad's notes \cite{Conrad:cohDescent}; the latter has a lengthy introduction with a lot of motivation and gives more detailed proofs of some theorems of \cite{SGA4:II} and \cite{Deligne:Hodge3}.
\vspace{3pt}

\labelpar{}
We denote by $\Delta$ the simplicial category whose objects the are the sets $[n] := \{0,1,\ldots,n\}$, $n \geq 0$, and whose morphisms are monotonic maps of sets $\phi\colon [n] \to [m]$ (i.e., for $i \leq j$, $\phi(i) \leq \phi(j)$). We define the augmented simplicial category to be $\Delta^{+} := \Delta \cup \{\varnothing\}$. A \defi{simplicial} (resp. \defi{augmented simplicial}) object $X_{\bullet}$ of a category $C$ is a functor $X_{\bullet}\colon \Delta^{\op} \to C$ (resp. $X_{\bullet}\colon (\Delta^+)^{\op} \to C$); one denotes by $X_n$ the image of $n$ under $X_{\bullet}$. We will typically write an augmented simplicial object as $X_{\bullet} \to X_{-1}$, where $X_{\bullet}$ is the associated simplicial object. A morphism between two simplicial or augmented simplicial objects is simply a natural transformation of functors. We denote these two categories by $\Simp C$ and $\Simp^+ C$.

Similarly, we define the truncated simplicial categories $\Delta_{\leq n} \subset \Delta$ and $\Delta_{\leq n}^+ \subset \Delta^+$ to be the full subcategories consisting of objects $[m]$ with $m \leq n$ (with the convention that $[-1] = \varnothing$). We define the category $\Simp_n C$ of $n$-\defi{truncated simplicial objects} of $C$ to be the category of functors $X_{\bullet}\colon \Delta_{\leq n}^{\op} \to C$ (and define $\Simp_n^+ C$ analogously).
\vspace{7pt}

\labelpar{}
Any morphism $p_0 \colon X \to Y$ in a category $C$ gives rise to an augmented simplicial object $p\colon X_{\bullet} \to Y$ with $X_n$ the fiber product of $n+1$ many copies of the morphism $p_0$; in this case we denote by $p_n$ the morphism $X_n \to Y$ and by $p_i^j$ the $j^{\text{th}}$ projection map $X_{i} \to X_{i-1}$ which forgets the $j^{\text{th}}$
component.
\vspace{7pt}

\labelpar{}
This last construction is right adjoint to the forgetful functor $X_{\bullet} \mapsto (X_0 \to X_{-1})$ from $\Simp^+ C \to \Simp_{\leq 0}^+ C$. We can generalize this point of view to construct an augmented simplicial object out of an $n$-truncated simplicial object as follows. We first define the $n$-skeleton functor
\[
\sk_n\colon \Simp C \to \Simp_{\leq n} C
\]
by sending $X_{\bullet}\colon \Delta^{\op} \to C$ to the composition $\sk_n(X_{\bullet})\colon \Delta_{\leq n}^{\op} \subset \Delta^{\op} \to C$. We define an augmented variant
\[
\sk_n\colon \Simp^+ C \to \Simp^+_{\leq n} C
\]
similarly, which we also denote by $\sk_n$. When $C$ admits finite limits the functor $\sk_n$ has a right adjoint $\cosk_n$ \cite{Conrad:cohDescent}*{Theorem 3.9}, which we call the $n$-\defi{coskeleton}. When we denote a truncated augmented simplicial object as $X_{\bullet} \to Y$, we may also write $\cosk_n(X_{\bullet}/Y) \to Y$ to denote $\cosk_n(X_{\bullet} \to Y)$ (so that $\cosk_n(X_{\bullet}/Y)$ is a simplicial object).
\vspace{7pt}

\labelpar{}
\label{p:fiberedSite}
When $C$ is a site we promote these notions a bit. The codomain fibration, i.e., the fibered category $\pi\colon \Mor C \to C$ which sends a morphism $ X \to Y \in \Ob \left( \Mor C \right)$ to its target $Y$ is a prestack if and only if $C$ is subcanonical (i.e., representable objects are sheaves), and a stack if every $F \in \widetilde{C}$ is representable (equivalently, if the Yoneda embedding $C \to \widehat{C}$ induces an isomorphism $C \to \widetilde{C}$). The fibers are the comma categories $C_{/X}$, and the site structure induced by the projection $C_{/X} \to C$ makes $\pi$ into a \defi{fibered site} (i.e., a fibered category with sites as fibers such that for any arrow $f\colon X \to Y$ in the base, any cartesian arrow over $f$ induces a functor $C_{/X} \to C_{/Y}$ which is a continuous morphism of sites; see \cite{SGA4:II}*{Expos\'e VI}). For a simplicial object $X_{\bullet}$ of $C$, the 2-categorical fiber product $\Delta^{\op} \times_C \Mor C \to \Delta^{\op}$ also is a fibered site; to abusively notate this fiber product as $X_{\bullet}$ will cause no confusion. We will call a site fibered over $\Delta^{\op}$ a \defi{simplicial site}. We define a morphism of fibered sites below \ref{p:fiberedTopos}.
\vspace{7pt}

\labelpar{}
\label{p:fiberedTopos}
Associated to any fibered site is a \defi{fibered topos}; we explicate this for the fibered site $X_{\bullet} \to \Delta^{\op}$ associated to a simplicial object $X_{\bullet}$ of a site $C$. We define first the \defi{total site} $\Tot X_{\bullet}$ to be the category $X_{\bullet}$ together with the smallest topology such that for every $n$, the inclusion of the fiber $X_n$ into $X_{\bullet}$ is continuous. The \defi{total topos} of $X_{\bullet}$ is then defined to be the category $\widetilde{X_{\bullet}}$ of sheaves on $\Tot X_{\bullet}$. We can define a morphism of fibered sites to be a morphism of fibered categories which induces a continuous morphism of total sites.

For $F_{\bullet} \in \widetilde{X_{\bullet}}$ denote by $F_n$ the restriction of $F_{\bullet}$ to $X_n$; as usual for any cartesian arrow $f$ over a map $d' \to d$ in $\Delta^{\op}$ one has an induced map $f^*F_d \to F_{d'}$ and as one varies $d' \to d$, these maps enjoy a cocycle compatibility. The total topos $\widetilde{X_{\bullet}}$ is equivalent to the category of such data. One can package this data as sections of a fibered topos $T_{\bullet} \to \Delta^{\op}$ (with fibers $T_n = \widetilde{C_n}$), i.e., a fibered category whose fibers are topoi such that cartesian arrows induce morphisms of topoi (or rather, the pullback functor of a morphism of topoi) on fibers. The total topos $\widetilde{X_{\bullet}}$ is then equivalent to the category of sections of $T_{\bullet} \to \Delta^{\op}$. When the topology on each fiber $X_n$ is subcanonical, the topology on $\Tot X_{\bullet}$ also is subcanonical and the inclusion ${X_{\bullet}} \subset T_{\bullet}$ of fibered sites (where one endows each fiber $T_n$ of the fibered topos $T_{\bullet}$ with its canonical topology) induces an equivalence of categories
of total topoi.
\vspace{7pt}

\labelpar{}
Let $p_0\colon X \to Y$ be a morphism of presheaves on a site $C$. As before, this gives rise to an augmented simplicial presheaf $p\colon X_{\bullet} \to Y$. Denoting by $\widehat{C}$ the category of presheaves on $C$, we may again promote $X_{\bullet}$ to a fibered site and study its fibered topos as in \ref{p:fiberedSite} above. Indeed, Yoneda's lemma permits one to consider the fibered site $\Mor' \widehat{C} \to \widehat{C}$ (where $\Mor' \widehat{C}$ is the subcategory of $\Mor \widehat{C}$ whose objects are arrows with source in $C$ and target in $\widehat{C}$), and again the 2-categorical fiber product $\Delta^{\op} \times_{\widehat{C}} \Mor' \widehat{C}$ is a fibered site. We also remark that passing to the presheaf category allows one to augment any simplicial object in $C$ by sending $\varnothing$ to the final object of $\widehat{C}$ (which is represented by the punctual sheaf).
\vspace{7pt}

\labelpar{}
\label{p:morphismTotalTopoi}
A morphism $f\colon X_{\bullet} \to Y_{\bullet}$ of simplicial sites induces a morphism $(f^*,f_*)\colon \widetilde{X_{\bullet}} \to \widetilde{Y_{\bullet}}$ of their total topoi; concretely, the morphisms of topoi $(f_{n}^*,f_{n*})\colon \widetilde{X_n} \to \widetilde{Y_n}$ induce for instance a map $\{F_n\} \mapsto \{f_{n*}F_n\}$ which respects the cocycle compatibilities.

To an augmented simplicial site $p\colon X_{\bullet} \to S$ one associates a morphism $(p^*, p_*)\colon \widetilde{X_{\bullet}} \to \widetilde{S}$ of topoi as follows. The pullback functor $p^*$ sends a sheaf of sets $\calF$ on $S$ to the collection $\{p_n^*\calF\}$ together with the canonical isomorphisms $p_{n+1}^{j*}p_n^*\calF_n \cong p_{n+1}^*\calF$ induced by the canonical isomorphism of functors $p_{n+1}^{j*}\circ p_n^* \cong p_{n+1}^*$ associated to the equality $p_{n+1} = p_n \circ p_{n+1}^j$. Its right adjoint $p_*$ sends the collection $\{\calF_n\}$ to the equalizer of the cosimplicial sheaf
\begin{equation}
\label{e:cosimplicial}
\xymatrix{
\cdots p_{(n-1)*}\calF_{n-1}
\ar@{=>}[r]
& p_{n*}\calF_{n}
\ar@3{->}[r]
& p_{(n+1)*}\calF_{n+1} \cdots
}
\end{equation}
where the $n+2$ maps between $p_{n*}\calF_n$ and $p_{n+1*}\calF_{n+1}$ are the pushforwards $p_{n*}$ of the adjoints $\calF_n \to p_{n+1*}^j\calF_{n+1}$ to $p_{n+1}^{j*}\calF_n \to \calF_{n+1}$ (using the equality $p_{(n+1)*} = p_{n*} \circ p_{n+1*}^j $). It follows from an elementary manipulation of the simplicial relations that the equalizer of \ref{e:cosimplicial} only depends on the first two terms; i.e., it is equal to the equalizer of
\[
\xymatrix{
p_{0*}\calF_0
\ar@<3pt>[r]\ar@<-3pt>[r] &
p_{1*}\calF_{1}
}.
\]

One can of course derive these functors, and we remark that while, for an augmented simplicial site $p\colon X_{\bullet} \to S$ and an abelian sheaf $\calF \in \Ab X_{\bullet}$, the sheaf $p_*\calF$ only depends on the first two terms of the cosimplicial sheaf of \ref{e:cosimplicial}, the cohomology $\mathbb{R}p_*\calF$ depends on the entire cosimplicial sheaf. Finally, we note the standard indexing convention that for a complex $\calF_{\bullet,\bullet}$ of sheaves on $X_{\bullet}$, for any $i$ we have that $\calF_{\bullet,i} \in \Ab X_{\bullet}$.

\begin{example}[\cite{Conrad:cohDescent}*{Examples 2.9 and 6.7}]
\label{E:constantSimplicial}

Let $S \in C$ be an object of a site and let $q\colon S_{\bullet} \to S$ be the constant augmented simplicial site associated to the identity morphism $\id\colon S \to S$. The total topos $\widetilde{S_{\bullet}}$ is then equivalent to the category $\Cosimp \widetilde{S} = \Hom(\Delta,\widetilde{S})$ of co-simplicial sheaves on $S$ and $\Ab(S_{\bullet})$ is equivalent to $\Cosimp \Ab(S)$.

(i) It is useful to consider the functor
\[
\ch\colon \Cosimp \Ab(S) \to \Ch_{\geq 0}(\Ab(S))
\]
to the category of chain complexes concentrated in non-negative degree which sends a cosimplicial sheaf to the chain complex whose morphisms are given by alternating sums of the simplicial maps. The direct image functor $q_*$ is then given by
\[
\calF_{\bullet} \mapsto H^0(\ch \calF_{\bullet}) = \ker(\calF_0 \to \calF_1).
\]

Let $I_{\bullet} \in \Ab S_{\bullet}$. Then $I_{\bullet}$ is injective if and only if $\ch I_{\bullet}$ is a split exact complex of injectives (this is a mild correction of \cite{Conrad:cohDescent}*{Corollary 2.13}). Furthermore, for $I_{\bullet} \in \Ab S_{\bullet}$ injective, the natural map
\[
\mathbb{R}q_* I_{\bullet} := q_* I_{\bullet} \to \ch I_{\bullet}
\]
is a quasi-isomorphism and thus $\mathbb{R}^iq_*I_{\bullet} = H^i(\ch I_{\bullet})$. One concludes by \cite{Hartshorne:AG}*{Theorem 1.3A} that the collection of functors $\calF_{\bullet} \mapsto H^i(\ch \calF_{\bullet})$ (the $i^{\Th}$ homology of the complex $\ch \calF_{\bullet}$) forms a universal $\delta$ functor and thus that $\mathbb{R}^iq_* \calF_{\bullet} \cong H^i(\ch(\calF_{\bullet}))$.

(ii) Actually, a mildly stronger statement is true: for an injective resolution $\calF_{\bullet} \to I_{\bullet, \bullet}$ (where $I_{\bullet, i} \in \Cosimp \Ab(S)$), one can show that the map $\ch \calF_{\bullet} \to \ch I_{\bullet, \bullet}$ induces a quasi-isomorphism $\ch \calF_{\bullet} \to \Tot \ch I_{\bullet, \bullet}$, where $\Tot$ is the total complex constructed by collapsing the double complex $\ch I_{\bullet, \bullet}$ along the diagonals. On the other hand the natural map $\mathbb{R}q_* \calF_{\bullet} := q_* I_{\bullet, \bullet} \to \Tot \ch I_{\bullet, \bullet}$ is an isomorphism.
Putting this together we see that the natural map $\mathbb{R}q_* \calF_{\bullet} \to \ch \calF_{\bullet}$ is a quasi-isomorphism. Moreover, the diagram
\[
\xymatrix{
\ch \calF_{\bullet} \ar[r] & \Tot \ch r_* I_{\bullet, \bullet}&\\
q_*\calF_{\bullet} \ar[r] \ar[u] & \mathbb{R}q_* \calF_{\bullet} \ar[u] \ar@{=}[r]& q_*I_{\bullet, \bullet}
}
\]
commutes, so that the natural map $q_* \calF_{\bullet} \to \mathbb{R}q_* \calF_{\bullet}$ is an isomorphism if and only if $q_* \calF_{\bullet} \to \ch \calF_{\bullet}$ is an isomorphism.

(iii) We note a final useful computation. Let $I_{\bullet,\bullet} \in \mathbb{D}_+(S_{\bullet})$ a complex of injective sheaves. Define $I_{-1,n} = \ker \left(\ch I_{\bullet, n}\right)$; by \cite{stacks-project}*{\href{http://math.columbia.edu/algebraic_geometry/stacks-git/locate.php?tag=015Z}{015Z}} (noting that since $q$ is a morphism of topoi, $q^*$ is an exact left adjoint to $q_*$) this is an injective sheaf. Then the hypercohomology of $I_{\bullet,\bullet}$ is simply (by definition) $\mathbb{R}q_*\left(I_{\bullet,\bullet}\right):= q_*\left(I_{\bullet,\bullet}\right) = I_{-1,\bullet}$.

\end{example}

\begin{remark}
\label{R:augmentedCohomology}

Let $p\colon X_{\bullet} \to S$ be an augmented simplicial site, and let $\calF_{\bullet} \in \widetilde{X_{\bullet}}$ be a sheaf of abelian groups. Using Example \ref{E:constantSimplicial}, we can clarify the computation of the cohomology $\mathbb{R}p_*\calF_{\bullet}$ via the observation that the associated map of topoi factors as
\[
\widetilde{X_{\bullet}} \xrightarrow{r} \widetilde{S_{\bullet}} \xrightarrow{q} \widetilde{S},
\]
where $r_*\calF_{\bullet}$ is the cosimplicial sheaf given by Equation \ref{e:cosimplicial}. Therefore, to compute $\mathbb{R}p_*\calF_{\bullet}$ we first study $\mathbb{R}r_*\calF_{\bullet}$.

Set $\calF_{-1} = p_*\calF_{\bullet} = \ker \ch r_* \calF_{\bullet}$. Viewing $\calF_{-1}$ as a complex concentrated in degree 0, we can consider the morphism of complexes $\calF_{-1} \to \ch r_* \calF_{\bullet}$. When $\calF_{\bullet}$ is injective, $r_*\calF_{\bullet}$ also is injective by \cite{stacks-project}*{\href{http://math.columbia.edu/algebraic_geometry/stacks-git/locate.php?tag=015Z}{015Z}}; applying the description of injective objects of $\Ab(S_{\bullet})$ of Example \ref{E:constantSimplicial} (i) to $\ch r_*\calF_{\bullet}$ we conclude that the map of complexes $\calF_{-1} \to \ch r_*\calF_{\bullet}$ is a quasi-isomorphism when $\calF_{\bullet}$ is injective.

Let $\calF_{\bullet} \to I_{\bullet, \bullet}$ be an injective resolution of $\calF_{\bullet}$. Then one gets a commutative diagram of chain complexes
\begin{equation}
\label{M:simplicialCohomologyDiagram}
\xymatrix{
I_{-1,\bullet} \ar[r] & \ch r_* I_{\bullet, \bullet}\\
\calF_{-1} \ar[r] \ar[u] & \ch r_* \calF_{\bullet} \ar[u]
};
\end{equation}
we can alternatively view Diagram \ref{M:simplicialCohomologyDiagram} as a double complex, indexed so that the sheaf $\calF_{-1}$ lives in bi-degree $(-1,-1)$. By the remark at the end of the preceding paragraph, all rows of Diagram \ref{M:simplicialCohomologyDiagram} except the bottom are quasi-isomorphisms;
the columns are generally not quasi-isomorphisms (since $\ch r_* \calF_{\bullet}$ is not exact in positive degree). Now we compute that
\begin{equation}
\label{eq:simpCohcomputation}
\mathbb{R}p_* \calF_\bullet := p_* I_{\bullet,\bullet} = I_{-1,\bullet}.
\end{equation}
The output $I_{-1,\bullet}$ is quasi-isomorphic to the total complex $\Tot \ch r_* I_{\bullet, \bullet}$
(given by collapsing the diagonals); since the $i^{\, \Th}$ column of the double complex $\ch r_* I_{\bullet,\bullet}$ computes $\mathbb{R}p_{i*}\calF_i$, there is an $E_1$-spectral sequence
\begin{equation}
\label{spectralSequence}
\mathbb{R}^jp_{i*}\calF_i = H^j(p_{i*} I_{\bullet, \bullet} ) \Rightarrow H^{i+j}(\Tot \ch r_* I_{\bullet, \bullet}) \cong H^{i+j}(I_{-1,\bullet}) \cong \mathbb{R}^{i + j}p_*\calF_{\bullet},
\end{equation}
where the last isomorphism is the $(i+j)^{\Th}$ homology of Equation \ref{eq:simpCohcomputation}.

\end{remark}

Our later computations will rely on the following degenerate case of the preceding remark.

\begin{corollary}
\label{C:exactImpliesCD}

Let $p\colon X_{\bullet} \to S$ be an augmented simplicial site. Then the following are true.

\begin{itemize}
\item [(i)] Let $\calF_{\bullet} \in \Ab X_{\bullet}$ be a sheaf of abelian groups. Suppose that for $i \geq 0$ and $j > 0$, one has $\mathbb{R}^jp_{i*}\calF_i = 0$. There is a quasi-isomorphism $\mathbb{R}p_* \calF_{\bullet} \cong \ch r_* \calF_{\bullet}$.

\item [(ii)] Let $\calF \in \widetilde{S}$ be an abelian sheaf such that for $i \geq 0$ and $j > 0$, $\mathbb{R}^jp_{i*}p_i^*\calF = 0$, such that $\ch r_* p^*\calF$ is exact in positive degrees, and such that the adjunction $\calF \to \ker(\calF_0 \to \calF_1)$ is an isomorphism. Then $\calF \to \mathbb{R}p_*p^*\calF$ is a quasi-isomorphism.

\end{itemize}

\end{corollary}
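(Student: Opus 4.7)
The plan is to exploit the factorization $p = q \circ r$ with $r\colon \widetilde{X_\bullet} \to \widetilde{S_\bullet}$ and $q\colon \widetilde{S_\bullet} \to \widetilde{S}$ from Remark \ref{R:augmentedCohomology}, together with the explicit descriptions of both factors afforded there and in Example \ref{E:constantSimplicial}.

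For part (i), I would first observe that $r_*$ is computed fiber-wise: $(r_* \calF_\bullet)_n = p_{n*}\calF_n$, and the same holds for its higher derived functors, so that $(\mathbb{R}^j r_* \calF_\bullet)_n = \mathbb{R}^j p_{n*} \calF_n$. The vanishing hypothesis therefore yields $\mathbb{R}r_* \calF_\bullet \simeq r_* \calF_\bullet$ in $\mathbb{D}_+(\widetilde{S_\bullet})$. Composing with $\mathbb{R}q_*$ and invoking the identification $\mathbb{R}q_* \calG_\bullet \simeq \ch \calG_\bullet$ from Example \ref{E:constantSimplicial} yields the desired quasi-isomorphism $\mathbb{R}p_* \calF_\bullet \simeq \ch r_* \calF_\bullet$. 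Equivalently, in the spectral sequence $E_1^{i,j} = \mathbb{R}^j p_{i*}\calF_i \Rightarrow \mathbb{R}^{i+j} p_* \calF_\bullet$ of Remark \ref{R:augmentedCohomology} the hypothesis kills every row with $j > 0$, so the $E_1$ page collapses to the row $j = 0$ with differential the alternating sum giving the complex $\ch r_* \calF_\bullet$; to upgrade this cohomology computation to a quasi-isomorphism in the derived category, one notes that for an injective resolution $\calF_\bullet \to I_{\bullet,\bullet}$ each column $p_{i*} I_{i,\bullet}$ is acyclic in positive degrees by hypothesis, whence the natural inclusion $\ch r_* \calF_\bullet \hookrightarrow \Tot \ch r_* I_{\bullet,\bullet} \simeq I_{-1,\bullet} = \mathbb{R}p_* \calF_\bullet$ is a quasi-isomorphism.

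For part (ii), I would apply part (i) to $\calF_\bullet := p^* \calF$. The first hypothesis is exactly what part (i) requires, so we obtain $\mathbb{R}p_* p^* \calF \simeq \ch r_* p^* \calF$. The remaining two hypotheses then translate directly into the statement that $\ch r_* p^* \calF$ is quasi-isomorphic to $\calF$ placed in degree $0$: exactness in positive degrees kills $H^{>0}$, and the adjunction identification $\calF \simeq \ker(\calF_0 \to \calF_1) = H^0(\ch r_* p^* \calF)$ handles $H^0$. It remains only to check that the quasi-isomorphism produced this way agrees with the natural adjunction $\calF \to \mathbb{R}p_* p^* \calF$, which is immediate from the construction.

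The main technical obstacle is the fiber-wise description of $\mathbb{R}^j r_*$, which relies on the decomposition of the total topos $\widetilde{X_\bullet}$ across simplicial degrees and on the fact that an injective on the total topos restricts to injectives on each fiber; this is standard but warrants a citation to the Stacks Project. Once that is granted, both parts reduce to formal manipulation of the spectral sequence and the explicit description of $q_*$.
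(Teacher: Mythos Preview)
Your proposal is correct and follows essentially the same route as the paper. The paper chooses an injective resolution $\calF_\bullet \to I_{\bullet,\bullet}$, cites \cite{Conrad:cohDescent}*{Lemma 6.4} for the fact that each $I_{i,j}$ is injective (the same technical point you flag), deduces that each column $r_{i*}\calF_i \to r_{i*}I_{i,\bullet}$ is exact by the vanishing hypothesis, and concludes that $\ch r_*\calF_\bullet \to \Tot \ch r_* I_{\bullet,\bullet} = \mathbb{R}p_*\calF_\bullet$ is a quasi-isomorphism --- exactly your ``Equivalently'' paragraph; your first formulation via $\mathbb{R}r_*\calF_\bullet \simeq r_*\calF_\bullet$ followed by $\mathbb{R}q_* \simeq \ch$ is just a cleaner repackaging of the same computation.
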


\begin{proof}

The second claim is a special case of the first claim. By \cite{Conrad:cohDescent}*{Lemma 6.4}, for any $i,j$, the sheaf $I_{i,j}$ is injective, and thus the $i^{\, \Th}$ column of $I_{\bullet,\bullet}$ is an injective resolution of $\calF_i$. For $i \geq 0$, the $i^{\, \Th}$ column of $r_* I_{\bullet,\bullet}$ is the complex $\mathbb{R}p_{i*}\calF_i$. Thus by hypothesis the complex $r_{i*}\calF_i \to r_{i*} I_{i, \bullet}$ is exact and it follows that the map
\[
\ch r_* \calF_{\bullet} \to \Tot \ch r_* I_{\bullet, \bullet} =: \mathbb{R}p_* \calF_{\bullet}
\]
of Diagram \ref{M:simplicialCohomologyDiagram} is a quasi-isomorphism.

\end{proof}

\begin{remark}

Let $f\colon X_{\bullet} \to Y_{\bullet}$ be a map of simplicial sites, $\calF_{\bullet} \in \widetilde{X_{\bullet}}$ be a sheaf of abelian groups, and suppose that for every $i \geq 0$, the natural map $f_{i*} \calF_i \to \mathbb{R}f_{i_*}\calF_i$ is a quasi-isomorphism. Then the strategy used in the proof of Corollary \ref{C:exactImpliesCD} generalizes to prove that the natural map $f_*\calF_{\bullet} \to \mathbb{R}f_*\calF_{\bullet}$ is a quasi-isomorphism.

\end{remark}

Finally, we arrive at the main definition.

\begin{definition}
\label{D:cohomologicalDescent}
 Let $C$ be a site. We say that an augmented simplicial object $p\colon X_{\bullet} \to S$ of $C$ is \defi{of cohomological descent} if the adjunction $\id \to \mathbb{R} p_* p^*$ on $\mathbb{D}_+(S)$ is an isomorphism; equivalently, $p$ is of cohomological descent if and only if the map $p^*\colon \mathbb{D}_+(S) \to \mathbb{D}_+(X_{\bullet}) $ is fully faithful \cite{Conrad:cohDescent}*{Lemma 6.8} (this explains the analogue with classical descent theory). A morphism $X \to S$ of $C$ is of cohomological descent if the associated augmented simplicial site $X_{\bullet} \to S$ is of cohomological descent (this makes sense even when $C$ does not have fiber products, since we can work in $\widetilde{C}$ instead). We say that an augmented simplicial object $X_{\bullet} \to S$ of $C$ is \defi{universally} of cohomological descent if for every $S' \to S$, the base change $X_{\bullet} \times_S S' \to S'$ (viewed in the topos $\widetilde{C}$ in case $C$ fails to admit fiber products) is of cohomological descent.

Similarly, for a sheaf of abelian groups $\calF \in \widetilde{S}$ we say that $p$ is of cohomological descent \defi{with respect to} $\calF$ if $\calF \cong \mathbb{R}p_*p^*\calF$, that a morphism $X \to S$ is of cohomological descent with respect to $\calF$ if the same is true of the associated augmented simplicial space, and universally of cohomological descent with respect to $\calF$ if for every $f\colon S' \to S$, the map $X \times_S S' \to S'$ is of cohomological descent with respect to $f^*\calF$.

Finally, we say that $p$ is of cohomological descent \defi{with respect to a subcategory} $\calC \subset \widetilde{S}$ if $p$ is of cohomological descent with respect to every $\calF \in \calC$; we say that a morphism $X \to S$ is of cohomological descent with respect to $\calC$ if the same is true of the associated augmented simplicial space, and universally of cohomological descent with respect to $\calC$ if for every $f\colon S' \to S$ and $\calF \in \calC$, the map $X \times_S S' \to S'$ is of cohomological descent with respect to $f^*\calF$.

\end{definition}

\labelpar{}
Once one knows cohomological descent for all $\calF \in \Ab \widetilde{S}$, one can deduce it for all $\calF^{\bullet} \in \mathbb{D}_+(S)$ via application of the hypercohomology spectral sequence.
\vspace{7pt}

\labelpar{}
\label{p:hypercovering}
The charm of cohomological descent is that there are interesting and useful augmented simplicial sites \emph{other than} $0$-coskeletons which are of cohomological descent. Let $C$ be a category with finite limits and let \textbf{P} be a class of morphisms in $C$ which is stable under base change and composition and contains all isomorphisms. We say that a simplicial object $X_{\bullet}$ of $C$ is a \textbf{P}-\defi{hypercovering} if for all $n \geq 0$, the natural map
\[
X_{n+1} \to (\cosk_n(\sk_n X_{\bullet}))_{n+1}
\]
is in \textbf{P}. For an augmented simplicial object $X_{\bullet} \to Y$ we say that $X_{\bullet}$ is a \textbf{P}-hypercover of $Y$ if the same condition holds for $n \geq -1$.

\begin{example}
The $0$-coskeleton $\cosk_0(X / Y) \to Y$ of a cover $X \to Y$ is a \textbf{P}-hypercover of $Y$, where \textbf{P} is the class of covering morphisms.

\end{example}

We record here many examples of morphisms of cohomological descent.

\begin{theorem}
\label{T:CDexamples}
Let $C$ be a site. Then the following are true.
\begin{itemize}
\item [(i)] Let $p\colon X \to Y$ in $\widetilde{C}$ be a covering. Then $p$ is universally of cohomological descent. Moreover, for any sheaf $F\in \Ab \widetilde{Y}$, the \v{C}ech complex $F \to p_{\bullet *}p^*_{\bullet}F$ is exact
\item [(ii)] Any morphism in $C$ which has a section locally (in $C$) is universally of cohomological descent.
\item [(iii)] The class of morphisms in $C$ universally of cohomological descent form a
topology (in the strong sense of \cite{SGA4:I}*{Expos\'e II}). In
particular, the following are true.

\begin{itemize}
\item [(a)] For a cartesian diagram of objects
\[
\xymatrix{
X' \ar[r]^{\pi'_0} \ar[d]_{f'_0} & X \ar[d]^{f_0}\\
S' \ar[r]_{\pi_0} & S
}
\]
in $C$ with $\pi_0$ universally of cohomological descent, $f_0$ is universally of cohomological descent if and only if $f_0'$ is universally of cohomological descent.

\item [(b)] If $X \to Y$ and $Y \to Z$ are maps in $C$ such that the composition $X \to Z$ is universally of cohomological descent, then so is $Y \to Z$.

\item [(c)] If $X \to Y$ and $Y \to Z$ are maps in $C$ and are universally of cohomological descent, then so is the composition $X \to Z$.
\end{itemize}

\item [(iv)] More generally, let \textbf{P} be the class of morphisms in $C$ which are universally of cohomological descent. Then a \textbf{P}-hypercover is universally of cohomological descent.

\end{itemize}

\end{theorem}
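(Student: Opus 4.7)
My plan is to prove the four parts in the order (ii), (iii), (i), (iv), each building on the previous.

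For (ii), I first treat the case of a \emph{global} section. If $p_0 \colon X \to Y$ admits $s \colon Y \to X$ with $p_0 \circ s = \id_Y$, then the diagonals $s_n = (s,\ldots,s) \colon Y \to X_n$ form a retraction of the augmentation at the simplicial level. This ``extra degeneracy'' produces a cosimplicial contracting homotopy on $r_* p^* F$ (as in Equation \ref{e:cosimplicial}) for every $F \in \Ab \widetilde{Y}$; after term-by-term analysis on an injective resolution of $F$, the hypotheses of Corollary \ref{C:exactImpliesCD}(ii) are met and $F \simeq \mathbb{R}p_* p^* F$. Universality is automatic because sections pull back. The case of a merely local section is postponed until (iii)(a) is established.

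For (iii), the topology axioms in the strong sense of \cite{SGA4:I}*{Expos\'e II} are checked by combining the spectral sequence \ref{spectralSequence} with abstract base change. The crucial axiom is (a). Given the cartesian square with $\pi_0$ UCD and $f_0'$ UCD, one computes for $F \in \Ab \widetilde{S}$:
\[
F \simeq \mathbb{R}\pi_* \pi^* F \simeq \mathbb{R}\pi_* \mathbb{R}f'_* f'^* \pi^* F \simeq \mathbb{R}f_* \mathbb{R}\pi'_* \pi'^* f^* F \simeq \mathbb{R}f_* f^* F,
\]
using in turn UCD of $\pi_0$, UCD of $f_0'$ applied to $\pi^* F$, commutation of pushforward around the cartesian square combined with $f'^* \pi^* = \pi'^* f^*$, and UCD of the base change $\pi_0'$ of $\pi_0$. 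Universality follows by applying the same argument after any further base change. Axioms (b) and (c) follow formally from (a) by factoring the relevant compositions through cartesian squares. Now (i) and the local-section case of (ii) both follow quickly: any covering $p$ in $\widetilde{C}$ has sections locally on $Y$, and after base-changing to a cover of $Y$ where $p$ has an honest section, the global-section case of (ii) applies; (iii)(a) then descends the conclusion back to $Y$. The \v{C}ech-exactness statement in (i) is similarly local on $Y$ and follows from the extra-degeneracy homotopy of the (ii) argument.

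For (iv), given a \textbf{P}-hypercover $p \colon X_\bullet \to S$, I would induct on the skeletal filtration. Write $X_\bullet^{(n)} := \cosk_n \sk_n X_\bullet$; then $X_\bullet = \colim_n X_\bullet^{(n)}$, and the transition $X_\bullet^{(n)} \to X_\bullet^{(n+1)}$ modifies only level $n+1$ through the \textbf{P}-map $X_{n+1} \to (X_\bullet^{(n)})_{n+1}$. Using the spectral sequence \ref{spectralSequence} to compute each $\mathbb{R}p_{*}^{(n)}$ applied to pullbacks of $F$, and (iii)(a) to see that the modified column still contributes the correct $E_1$-term, one inductively obtains UCD of each $X_\bullet^{(n)}/S$; passing to the colimit (only finitely many simplicial levels contribute to any fixed cohomological degree) gives UCD of $X_\bullet/S$. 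Universality is preserved since both hypercovers and \textbf{P} are stable under base change. The main obstacle, and the heart of the argument, is the spectral-sequence bookkeeping needed to isolate the effect of each skeletal transition and argue that the \textbf{P}-property of the boundary map $X_{n+1} \to (X_\bullet^{(n)})_{n+1}$ exactly supplies the required descent; this is the abstract content of \cite{SGA4:II}*{V\textit{bis}, Section 3}.
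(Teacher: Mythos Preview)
The paper's own proof is entirely by citation: (i) is \cite{Olsson:Crystal}*{Lemma 1.4.24}, (ii) is deduced from (i) by observing that a morphism with local sections is an epimorphism of sheaves and hence a covering in the canonical topology, and (iii) and (iv) are \cite{Conrad:cohDescent}*{Theorems 7.5 and 7.10}. Your outline is therefore far more detailed than what the paper supplies, and it follows in substance the standard arguments in those references.

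There is, however, a genuine circularity in your deduction of (i). You propose to prove (i) by base-changing the covering $p\colon X \to Y$ along some cover $\pi_0\colon Y' \to Y$ over which $p$ acquires a section, applying the global-section case of (ii) to the base change, and then invoking (iii)(a) to descend. But the hypothesis of (iii)(a) is that $\pi_0$ is already universally of cohomological descent; the only $\pi_0$ generically available is $p$ itself (or a refinement thereof), whose UCD is precisely the content of (i). The repair is small but essential: rather than appealing to (iii)(a), argue directly that since $p_0$ is an epimorphism of sheaves, the functor $p_0^*$ is exact and conservative, and combine this with the base-change isomorphism $p_0^* \,\mathbb{R}p_{\bullet *} \cong \mathbb{R}p'_{\bullet *}\, (p_0')^*$ (valid because every map in sight is a localization morphism of topoi). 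This reduces one to showing the adjunction is an isomorphism for $p_0'\colon X\times_Y X \to X$, which has the diagonal as a section, so your global-section argument applies. This is the proof in Olsson's reference, and it is logically prior to (iii)(a); the paper's order---(i) first, then (ii) as an immediate corollary---reflects this dependence.

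A smaller point: your claim that (b) and (c) ``follow formally from (a) by factoring the relevant compositions through cartesian squares'' is too quick. For (c), the \v{C}ech nerve of the composite $X \to Z$ is not a fiber product of the nerves of $X\to Y$ and $Y\to Z$, and one genuinely needs the bisimplicial analysis you sketched for (a), applied now to the map of augmented simplicial objects $\cosk_0(X/Z) \to \cosk_0(Y/Z)$; (b) likewise requires its own short argument. Neither is difficult, but neither is a one-line corollary of (a).
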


\begin{proof}

Statement (i) is \cite{Olsson:Crystal}*{Lemma 1.4.24}, (ii) follows from (i) since any morphism with a section is a covering in the canonical topology, (iii) is \cite{Conrad:cohDescent}*{Theorem 7.5}, and (iv) is \cite{Conrad:cohDescent}*{Theorem 7.10}.

\end{proof}

A mild variant applicable to a particular sheaf (as opposed to the entire category of abelian sheaves) will be useful later.

\begin{theorem}
\label{T:topologySingle}

Let $C$ be a site. Then the following are true.

\begin{itemize}
\item [(a)] Consider a cartesian diagram
\[
\xymatrix{
X' \ar[r]^{\pi'_0} \ar[d]_{f'_0} & X \ar[d]^{f_0}\\
S' \ar[r]_{\pi_0} & S
}
\]
in $C$ and let $\calF \in \widetilde S$ be a sheaf of abelian groups. Suppose $\pi_0$ is universally of cohomological descent with respect to $\calF$. Then $f_0$ is universally of cohomological descent with respect to $\calF$ if and only if $f_0'$ is universally of cohomological descent with respect to $\pi_0^*\calF$.

\item [(b)] Let $f\colon X \to Y$ and $g\colon Y \to Z$ be maps in $C$ and let $\calF \in \widetilde{Z}$ be a sheaf of abelian groups. Suppose that the composition $X \to Z$ is universally of cohomological descent with respect to $\calF$. Then $Y \to Z$ is as well.

\item [(c)] Let $f\colon X \to Y$ and $g\colon Y \to Z$ be maps in $C$ and let $\calF \in \widetilde{Z}$ be a sheaf of abelian groups. If $g$ is universally of cohomological descent with respect to $\calF$ and $f$ is universally of cohomological descent with respect to $g^*\calF$, then the composition $g \circ f$ is universally of cohomological descent with respect to $\calF$.

\item [(d)] Let $f_i \colon X_i \to Y_i$ be maps in $C$ indexed by some arbitrary set $I$. For each $i \in I$ let $\calF_i \in \widetilde{Y_i}$ be a sheaf of abelian groups. Suppose that for each $i$, $f_i$ is of cohomological descent relative to $\calF_i$. Then $\coprod f_i \colon \coprod X_i \to \coprod Y_i$ is of cohomological descent relative to $\coprod \calF_i$ (where disjoint unions are taken in $\widehat{C}$).

\end{itemize}

\end{theorem}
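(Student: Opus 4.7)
The plan is to adapt the proof of Theorem \ref{T:CDexamples} (iii) (i.e., \cite{Conrad:cohDescent}*{Theorem 7.5}) by tracking a single abelian sheaf $\calF$ throughout. The arguments in loc.\ cit.\ proceed via unit/counit adjunctions, topos-theoretic base change, and Leray-type spectral sequences; none of these manipulations genuinely require the entire category $\Ab \widetilde{S}$, and each assertion localizes cleanly to the behavior of a fixed sheaf. Part (d) I would handle first, as it is essentially formal: a disjoint union of sites has total topos equal to the product of the component topoi, so abelian sheaves decompose as tuples $(\calF_i)$, and both pullback and derived pushforward along $\coprod f_i$ act componentwise. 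The adjunction $\calF \to \mathbb{R}(\coprod f_i)_*(\coprod f_i)^*\calF$ therefore splits as a product of componentwise adjunctions, each of which is an isomorphism by hypothesis.

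For (a), I would use the base change isomorphism $\pi_0^*\mathbb{R}f_{0*} \isomto \mathbb{R}f'_{0*}\pi'_0{}^*$ available at the topos level for any cartesian diagram of sites, and propagate it through the associated Čech nerves. This transports the adjunction $\calF \to \mathbb{R}\bar f_{0*}\bar f_0^*\calF$ into $\pi_0^*\calF \to \mathbb{R}\bar f'_{0*}\bar f'_0{}^*\pi_0^*\calF$ and conversely, so $f_0$ is of CD with respect to $\calF$ if and only if $f'_0$ is of CD with respect to $\pi_0^*\calF$. The universal quantifier is preserved because any further base change $S'' \to S'$ can be absorbed into a base change of the outer square.

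For (c), letting $h = g\circ f$, the key construction is a bisimplicial augmented site whose levels are simultaneously relative Čech nerves of $f$ over the levels of the Čech nerve $Y_\bullet^g \to Z$ of $g$. Applying $\mathbb{R}\bar g_*$ to the isomorphism $\bar g^*\calF \to \mathbb{R}\bar f_*^{Y_\bullet}\bar f^*\bar g^*\calF$ (the hypothesis that $f$ is of UCD against $g^*\calF$, applied levelwise after pulling back to each $Y_n$) and composing with $\calF \to \mathbb{R}\bar g_*\bar g^*\calF$ yields $\calF \to \mathbb{R}\bar h_*\bar h^*\calF$ via a Grothendieck composition spectral sequence. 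Part (b) then follows by a cancellation argument using the natural map of simplicial sites $W_\bullet \to Y_\bullet$ over $Z$ induced by the factorization $h = g \circ f$: the unit for $g$ is a retract of the unit for $h$, hence an isomorphism whenever the latter is. Universality in (b) and (c) is obtained by base-changing the hypotheses along any $Z' \to Z$ and invoking part (a).

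The main obstacle is the bookkeeping in (c): constructing the bisimplicial augmented site, verifying that the UCD hypothesis on $f$ yields the required levelwise CD statements after pullback to each $Y_n$ (which is precisely where the ``universally'' in the hypothesis is consumed), and checking that the Grothendieck composition spectral sequence degenerates compatibly with the constraint that we only control the single sheaf $\calF$ and its various pullbacks, rather than all of $\Ab\widetilde{Z}$. Once this scaffolding is in place each remaining step is simply the application of an $\mathbb{R}$-functor to an isomorphism.
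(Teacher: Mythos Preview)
Your proposal is correct and takes essentially the same approach as the paper. The paper's proof of (a)--(c) simply says the arguments are identical to those of \cite{Conrad:cohDescent}*{Theorem 7.5} (tracking a single sheaf instead of all of $\Ab\widetilde{S}$), and for (d) it observes that the simplicial topos is fibered over $I$ so the adjunction map decomposes componentwise---exactly your argument.
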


\begin{proof}

The proofs of (a) - (c) are identical to the proof of Theorem \ref{T:CDexamples} (iii) found in \cite{Conrad:cohDescent}*{Theorem 7.5}, and (d) follows from the fact that, setting $p_0 = \coprod f_i$, the induced morphism of simplicial topoi
\[
p\colon \widetilde{\left(\coprod X_i\right)}_{\bullet} \to \widetilde{\coprod Y_i}
\]
is also a morphism of topoi fibered over $I$, so that in particular the natural map
\[
\coprod \calF_i \to \mathbb{R}p_*p^*\coprod \calF_i
\]
is an isomorphism if and only if, for all $i \in I$, the map $\calF_i \to \mathbb{R}f_{i\bullet*}f_{i\bullet}^*\calF_i$ is an isomorphism.
\vspace{7pt}

\end{proof}

\begin{corollary}
\label{C:BCD}

Let
\[
\xymatrix{
\coprod X_i \ar[r] \ar[d] & X \ar[d]\\
\coprod Y_i \ar[r]^{\coprod v_i} & Y
}
\]
be a commutative diagram in $C$ and let $\calF \in \widetilde Y$ be a sheaf of abelian groups. Suppose that $\calF$ is universally of cohomological descent with respect to $\coprod Y_i \to Y$ and that for each $i$, $v_i^*\calF$ is universally of cohomological descent with respect to $X_i \to Y_i$. Then $\calF$ is universally of cohomological descent with respect to $X \to Y$.

\end{corollary}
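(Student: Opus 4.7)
The plan is a short formal reduction using the composition, base-change, and coproduct properties recorded in Theorem~\ref{T:topologySingle}. The commutative square gives two factorizations of the diagonal composition $\coprod X_i \to Y$: one along the left-then-bottom leg $\coprod X_i \to \coprod Y_i \to Y$, and one along the top-then-right leg $\coprod X_i \to X \to Y$. I would first establish universal cohomological descent for the composition along the first leg, then transfer the conclusion to $X \to Y$ via the second leg.

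First, I would promote the hypothesis on the individual $X_i \to Y_i$ to universal cohomological descent of the disjoint union $\coprod X_i \to \coprod Y_i$ with respect to $(\coprod v_i)^*\calF$. Theorem~\ref{T:topologySingle}(d) as stated gives only cohomological descent (not universal), so a brief verification is needed: for an arbitrary base change $T \to \coprod Y_i$ in $\widetilde C$, one uses that fiber products commute with coproducts in the topos to decompose $T \cong \coprod T_i$ with $T_i := T \times_{\coprod Y_i} Y_i$, whence the base change $T \times_{\coprod Y_i} \coprod X_i \to T$ identifies with the coproduct of the morphisms $T_i \times_{Y_i} X_i \to T_i$. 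Each factor is of cohomological descent with respect to the appropriate pullback of $v_i^*\calF$ by the universal hypothesis on $X_i \to Y_i$, and part~(d) applied to this disjoint union delivers the required cohomological descent over $T$.

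Next, I would invoke Theorem~\ref{T:topologySingle}(c) to compose the previous step with the hypothesis on $\coprod v_i$, obtaining that the composition $\coprod X_i \to \coprod Y_i \to Y$ is universally of cohomological descent with respect to $\calF$. By commutativity of the given square, this composition equals $\coprod X_i \to X \to Y$, and Theorem~\ref{T:topologySingle}(b) then forces $X \to Y$ to be universally of cohomological descent with respect to $\calF$.

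The argument is almost entirely formal diagram-chasing. The only step with any content is the verification that part~(d) is stable under arbitrary base change under the given hypotheses, which rests on the elementary fact that coproducts distribute over fiber products in a topos. Everything else is a direct invocation of the composition and factorization rules in Theorem~\ref{T:topologySingle}, so I do not anticipate a genuine obstacle beyond this bookkeeping.
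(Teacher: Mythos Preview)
Your proposal is correct and follows essentially the same approach as the paper, which simply cites Theorem~\ref{T:topologySingle} (b), (c), and (d). Your additional remark that part~(d) must be upgraded to a universal statement via distributivity of coproducts over fiber products in the topos is a valid refinement of the paper's terse one-line argument.
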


\begin{proof}

This follows immediately from Theorem \ref{T:topologySingle} (b), (c), and (d).

\end{proof}

\section{The Overconvergent Site}
\label{S:definitions}

In \cite{leStum:site}, le Stum associates to a variety $X$ over a field $k$ of characteristic $p$ a ringed site $(\AN_{\g}^{\dagger}(X), \calO_{X_{\g}}^{\dagger})$ and proves an equivalence $\Mod_{\fp}(\calO_{X_{\g}}^{\dagger}) \cong \Isoc^{\dagger}(X)$ between the category of finitely presented $\calO_{X_{\g}}^{\dagger}$-modules and the category of overconvergent isocrystals on $X$. Moreover, he proves that the cohomology of a finitely-presented $\calO_{X_{\g}}^{\dagger}$-module agrees with the usual rigid cohomology of its associated overconvergent isocrystal.
\\

In this section we recall the basic definitions of \cite{leStum:site}.

\subsection{The overconvergent site}
\label{S:overconvergentSite}

Following \cite{leStum:site}, we make the following series of definitions; see \cite{leStum:site} for a more detailed discussion of the definitions with some examples.

\begin{definition}[\cite{leStum:site}, 1.2]
\label{D:overconvergentVariety}

Define an \defi{overconvergent variety} over $\calV$ to be a pair $(X \subset P, V \xrightarrow{\lambda} P_{K})$, where $X \subset P$ is a locally closed immersion of an algebraic variety $X$ over $k$ into the special fiber $P_k$ of a formal scheme $P$ (recall our convention that all formal schemes are topologically finitely presented over $\Spf \calV$), and $V \xrightarrow{\lambda} P_K$ is a morphism of analytic varieties, where $P_K$ denotes the generic fiber of $P$, which is an analytic space.  When there is no confusion we will write $(X,V)$ for $(X \subset P, V \xrightarrow{\lambda} P_{K})$ and $(X,P)$ for $(X \subset P, P_K \xrightarrow{\id} P_{K})$.
Define a \defi{formal morphism} $(X',V') \to (X,V)$ of overconvergent varieties to be a commutative diagram
\[
\xymatrix{
X' \ar@{^(->}[r] \ar[d]^f & P' \ar[d]^v & P_K' \ar[l] \ar[d]^{v_K} & V' \ar[l] \ar[d]^u\\
X \ar@{^(->}[r] & P & P_K \ar[l] & V \ar[l]
}
\]
where $f$ is a morphism of algebraic varieties, $v$ is a morphism of formal schemes, and $u$ is a morphism of analytic varieties.

Finally, define $\AN(\calV)$ to be the category whose objects are overconvergent varieties and morphisms are formal morphisms. We endow $\AN(\calV)$ with the \defi{analytic topology}, defined to be the topology generated by families $\{(X_i,V_i) \to (X,V)\}$ such that for each $i$, the maps $X_i \to X$ and $P_i \to P$ are the identity maps, $V_i$ is an open subset of $V$, and $V = \bigcup V_i$ is an open covering (recall that an open subset of an analytic space is admissible in the $G$-topology and thus also an analytic space -- this can be checked locally in the $G$-topology, and for an affinoid this is clear because there is a basis for the topology of open affinoid subdomains).

\end{definition}

\begin{definition}[\cite{leStum:site}, Section 1.1]

The specialization map $P_K \to P_k$ induces by composition a map $V \to P_k$ and we define the \defi{tube} $]X[_V$ of $X$ in $V$ to be the preimage of $X$ under this map. The tube $]X[_{P_K}$ admits the structure of an analytic space and the inclusion $i_X\colon ]X[_{P_K}\, \hookrightarrow P_K$ is a locally closed inclusion of analytic spaces (and generally not open, in contrast to the rigid case). The tube $]X[_V$ is then the fiber product $]X[_{P_K}\times_{P_K}V$ (as analytic spaces) and in particular is also an analytic space. 

\end{definition}

\begin{remark}
A formal morphism $(f,u)\colon (X', V') \to (X, V)$ induces a morphism $]f[_u\colon ]X'[_{V'} \to ]X[_V$ of tubes. Since $]f[_u$ is induced by $u$, when there is no confusion we will sometimes denote it by $u$.
\end{remark}

The fundamental topological object in rigid cohomology is the tube $]X[_{V}$, in that most notions are defined only up to neighborhoods of $]X[_{V}$. We immediately make this precise by modifying $\AN(\calV)$.

\begin{definition}[\cite{leStum:site}, Definition 1.3.3]
Define a formal morphism
\[
(f,u)\colon (X', V') \to (X, V)
\]
to be a \defi{strict neighborhood} if $f$ and $]f[_u$ are isomorphisms and $u$ induces an isomorphism from $V'$ to a neighborhood $W$ of $]X[_V$ in $V$.

\end{definition}

\begin{definition}
\label{D:overconvergentSite}

We define the category $\AN^{\dagger}(\calV)$ of \defi{overconvergent varieties} to be the localization of $\AN(\calV)$ by strict neighborhoods (which is possible by \cite{leStum:site}*{Proposition 1.3.6}): the objects of $\AN^{\dagger}(\calV)$ are the same as those of $\AN(\calV)$ and a morphism $(X',V') \to (X,V)$ in $\AN^{\dagger}(\calV)$ is a pair of formal morphisms
\[
(X',V') \leftarrow (X',W) \to (X,V),
\]
where $(X',W) \to (X',V')$ is a strict neighborhood.

The functor $\AN(\calV) \to \AN^{\dagger}(\calV)$ induces the image topology on $\AN^{\dagger}(\calV)$ (i.e., the largest topology on $\AN^{\dagger}(\calV)$ such that the map from $\AN(\calV)$ is continuous). By \cite{leStum:site}*{Proposition 1.4.1}, the image topology on $\AN^{\dagger}(\calV)$ is generated by the pretopology of collections $\{(X,V_i) \to (X,V)\}$ with $\bigcup V_i$ an open covering of a neighborhood of $]X[_V$ in $V$ and $]X[_V = \bigcup \, ]X[_{V_i}$.

\end{definition}

\begin{remark}

From now on any morphism $(X',V') \to (X,V)$ of overconvergent varieties will denote a morphism in $\AN^{\dagger}(\calV)$. One can give a down to earth description of morphisms in $\AN^{\dagger}(\calV)$ \cite{leStum:site}*{1.3.9}: to give a morphism $(X', V') \to (X,V)$, it suffices to give a neighborhood $W'$ of $]X'[_{V'}$ in $V'$ and a pair $f\colon X' \to X, u\colon W' \to V$ of morphisms which are \emph{geometrically pointwise compatible}, i.e., such that $u$ induces a map on tubes and the outer square of the diagram
\[
\xymatrix{
W' \ar[r]^{u} & V \\
]X'[_{W'} \ar[r]^{]f[_u}
\ar@{}[u]|{\bigcup |} \ar[d] & ]X[_{V}
\ar@{}[u]|{\bigcup |} \ar[d] \\
X' \ar[r]^{f} & X
}
\]
commutes (and continues to do so after any base change by any isometric extension $K'$ of $K$).

\end{remark}

\begin{definition}

For any presheaf $T \in \widehat{\AN^{\dagger}(\calV)}$, we define $\AN^{\dagger}(T)$ to be the localized category $\AN^{\dagger}(\calV)_{/T}$ whose objects are morphisms $h_{(X,V)} \to T$ (where $h_{(X,V)}$ is the presheaf associated to $(X,V)$) and morphisms are morphisms $(X',V') \to (X,V)$ which induce a commutative diagram
\[
\xymatrix{
h_{(X',V')}\ar[rr]\ar[rd] && h_{(X,V)}\ar[ld]\\
&T&
}.
\]
We may endow $\AN^{\dagger}(T)$ with the induced topology 
(i.e., the smallest topology making continuous the projection functor $\AN^{\dagger}(T) \to \AN^{\dagger}(\calV)$; see \cite{leStum:site}*{Definition 1.4.7}); concretely, the covering condition is the same as in \ref{D:overconvergentSite}. When $T = h_{(X,V)}$ we denote $\AN^{\dagger}(T)$ by $\AN^{\dagger}(X,V)$. Since the projection $\AN^{\dagger} T \to \AN^{\dagger} \calV$ is a fibered category, the projection is also cocontinuous with respect to the induced topology. Finally, an algebraic space $X$ over $k$ defines a presheaf $(X',V') \mapsto \Hom(X',X)$, and we denote the resulting site by $\AN^{\dagger}(X)$.

\end{definition}

There will be no confusion in writing $(X,V)$ for an object of $\AN^{\dagger}(T)$. 
\vspace{7pt}

We use subscripts to denote topoi and continue the above naming conventions -- i.e., we denote the category of sheaves of sets on $\AN^{\dagger}(T)$ (resp. $\AN^{\dagger}(X,V), \AN^{\dagger}(X)$) by $T_{\AN^{\dagger}}$ (resp. $(X,V)_{\AN^{\dagger}}, X_{\AN^{\dagger}}$). Any morphism $f\colon T' \to T$ of presheaves on $\AN^{\dagger}(\calV)$ induces a morphism $f_{\AN^{\dagger}}\colon T'_{\AN^{\dagger}} \to T_{\AN^{\dagger}}$ of topoi. In the case of the important example of a morphism $(f,u)\colon (X',V') \to (X,V)$ of overconvergent varieties, we denote the induced morphism of topoi by $(u^*_{\AN^{\dagger}}, u_{\AN^{\dagger}*})$.

For an analytic space $V$ we denote by $\Open V$ the category of open subsets of $V$ and by $V_{\an}$ the associated topos of sheaves of sets on $\Open V$. Recall that for an analytic variety $(X,V)$, the topology on the tube $]X[_V$ is induced by the inclusion $i_X\colon ]X[_V \, \hookrightarrow V$.

\begin{definition}[\cite{leStum:site}*{Corollary 2.1.3}]
\label{D:sheafRealization}

Let $(X,V)$ be an overconvergent variety. Then there is a morphism of sites
\[
\varphi_{X,V}\colon \AN^{\dagger}(X,V) \to \Open \, ]X[_V.
\]
The notation as usual is in the `direction' of the induced morphism of topoi and in particular backward; it is associated to the functor $\Open \, ]X[_V \to \AN^{\dagger}(X,V)$ given by $U = W\cap\, ]X[_V\,
\mapsto \, (X,W)$ (and is independent of the choice of $W$ up to strict neighborhoods). This induces a morphism of topoi
\[
(\varphi^{-1}_{X,V}, \varphi_{X,V*}) \colon (X,V)_{\AN^{\dagger}} \to (]X[_V)_{\an}.
\]

\end{definition}

\begin{definition}[\cite{leStum:site}*{2.1.7}]
Let $(X,V) \in \AN^{\dagger}(T)$ be an overconvergent variety over $T$ and let $F \in T_{\AN^{\dagger}}$ be a sheaf on $\AN^{\dagger}(T)$. We define the \defi{realization} $F_{X,V}$ of $F$ on $]X[_V$ to be $\varphi_{(X,V)*}(F|_{(X,V)_{\AN^{\dagger}}})$, where $F|_{(X,V)_{\AN^{\dagger}}}$ is the restriction of $F$ to $\AN^{\dagger}(X,V)$.

\end{definition}

We can describe the category $T_{\AN^{\dagger}}$ in terms of realizations in a manner similar to sheaves on the crystalline or lisse-\'etale sites.

\begin{proposition}[\cite{leStum:site}, Proposition 2.1.8]
\label{P:sheafReal}

Let $T$ be a presheaf on $\AN^{\dagger}(\calV)$. Then the category $T_{\AN^{\dagger}}$ is equivalent to the following category :
\begin{enumerate}

\item An object is a collection of sheaves $F_{X,V}$ on $]X[_V$ indexed by $(X, V) \in \AN^\dagger(T)$ and, for each $(f, u) \colon (X', V') \to (X, V)$, a morphism $\phi_{f,u} : ]f[_u^{-1} F_{X,V} \to F_{X', V'}$, such that as $(f,u)$ varies, the maps $\phi_{f,u}$ satisfy the usual compatibility condition.

\item A morphism is a collection of morphisms $F_{X,V} \to G_{X,V}$ compatible with the morphisms $\phi_{f,u}$.

\end{enumerate}

\end{proposition}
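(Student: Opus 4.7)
The plan is to construct mutually inverse functors between $T_{\AN^{\dagger}}$ and the category of compatible families of realizations. The organizing principle is that $\AN^{\dagger}(T)$ is a fibered site over the collection of overconvergent varieties $(X,V)$ over $T$, and each fiber $\AN^{\dagger}(X,V)$ is controlled, modulo strict neighborhoods, by the category $\Open\,]X[_V$ via the morphism of sites $\varphi_{X,V}$ of Definition \ref{D:sheafRealization}.

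In one direction, I would send $F \in T_{\AN^{\dagger}}$ to the family of realizations $F_{X,V} := \varphi_{X,V*}(F|_{(X,V)_{\AN^{\dagger}}})$. A morphism $(f,u)\colon (X',V') \to (X,V)$ induces a morphism of sites $\AN^{\dagger}(X',V') \to \AN^{\dagger}(X,V)$ sitting over $]f[_u\colon\,]X'[_{V'} \to\,]X[_V$; transposing the canonical adjunction produces the transition morphism $\phi_{f,u}\colon\,]f[_u^{-1}F_{X,V} \to F_{X',V'}$, and functoriality of realization in $(f,u)$ yields the cocycle compatibility.

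Conversely, from a compatible family $(F_{X,V}, \phi_{f,u})$ I would define a presheaf $F$ on $\AN^{\dagger}(T)$ by $F(X,V) := \Gamma(]X[_V, F_{X,V})$, with the restriction map along $(f,u)$ obtained by composing the canonical pullback on global sections with $\Gamma(\phi_{f,u})$. The cocycle condition makes this a presheaf. For the sheaf condition, it suffices to test against the generating covers $\{(X, V_i) \to (X, V)\}$ of Definition \ref{D:overconvergentSite}; such a cover yields an open cover $]X[_V = \bigcup\,]X[_{V_i}$, under which the compatibility data canonically identifies $F_{X,V_i}$ with $F_{X,V}|_{]X[_{V_i}}$, so the descent condition reduces to the sheaf axiom for $F_{X,V}$ on $]X[_V$.

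The main obstacle is to verify that these two constructions are mutually inverse. This reduces to showing that sheaves on $\AN^{\dagger}(X,V)$ are determined by their restrictions to objects of the form $(X,W) \to (X,V)$ --- the objects indexed, after localization at strict neighborhoods, by open subsets of $]X[_V$. The key inputs are that every object $(X',V') \to (X,V)$ of $\AN^{\dagger}(X,V)$ admits a cover by objects of this form in the image topology, and that $\AN^{\dagger}(\calV)$ identifies two such objects whenever their tubes coincide up to strict neighborhood. Granted this equivalence of fibers, the global statement is a special case of the general principle, reviewed in \cite{SGA4:II}*{Expos\'e VI}, that sheaves on the total site of a fibered site are equivalent to cartesian sections of the associated fibered topos.
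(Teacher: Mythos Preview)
The paper does not give a proof; the proposition is quoted from \cite{leStum:site}*{Proposition 2.1.8}. Your two functors are the correct ones, and one composite is immediate: starting from $F \in T_{\AN^{\dagger}}$ and passing to realizations and back returns $F'(X,V) = \Gamma\bigl(]X[_V,\varphi_{X,V*}(F|_{(X,V)})\bigr) = F(X,V)$ directly from the definition of $\varphi_{X,V}$.

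Your treatment of the other composite, however, rests on a false claim. It is not true that every object $(X',V') \to (X,V)$ of $\AN^{\dagger}(X,V)$ is covered, in the analytic topology, by objects of the form $(X,W)$: a cover only replaces $V'$ by open pieces while keeping $X'$ fixed, so an object with $X' \not\cong X$ is never refined by such objects. Consequently $\varphi_{X,V}$ is not an equivalence of topoi, and a sheaf on $\AN^{\dagger}(X,V)$ is \emph{not} determined by its values on the $(X,W)$.

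Fortunately this step is also unnecessary. Passing from a family $(F_{X,V},\phi)$ to $F$ and back yields, at $(X,V)$, the sheaf $U \mapsto \Gamma(U,F_{X,W})$ for any open $W \subset V$ with $]X[_W = U$; identifying this with $F_{X,V}$ amounts precisely to requiring that the transition map $\phi$ attached to $(X,W) \hookrightarrow (X,V)$ be an isomorphism. That is the substantive content hidden behind ``the usual compatibility condition'', and it is also what your verification of the sheaf axiom silently uses. Once it is made explicit, both the sheaf property of $F$ and the remaining composite follow directly, with no need for the covering claim or for the fibered-topos formalism of \cite{SGA4:II}*{Expos\'e VI}.
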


To obtain a richer theory we endow our topoi with sheaves of rings and study the resulting theory of modules.

\begin{definition}[\cite{leStum:site}, Definition 2.3.4]

Define the \defi{sheaf of overconvergent functions} on $\AN^{\dagger}(\calV)$ to be the presheaf of rings
\[
\mathcal O_{\mathcal V}^\dagger \colon (X, V) \mapsto \Gamma(]X[_V, i_X^{-1}\mathcal
O_V)
\]
where $i_X$ is the inclusion of $]X[_V$ into $V$; this is a sheaf by \cite{leStum:site}*{Corollary 2.3.3}. For $T \in \widehat{\AN^{\dagger}(\calV)}$ a presheaf on $\AN^{\dagger}(\calV)$, define $\calO^{\dagger}_{T}$ to be the restriction of $\calO^{\dagger}_{\calV}$ to $\AN^{\dagger}(T)$.

\end{definition}

We follow our naming conventions above, for instance denoting by $\calO^{\dagger}_{(X,V)}$ the restriction of $\calO^{\dagger}_{\calV}$ to $\AN(X,V)$.

\begin{remark}
\label{R:ringedRealization}

By \cite{leStum:site}*{Proposition 2.3.5, (i)}, the morphism of topoi of Definition \ref{D:sheafRealization} can be promoted to a morphism of ringed sites
\[
(\varphi_{X,V}^*, \varphi_{X,V*}) \colon (\textrm{AN}^\dagger(X,V), \mathcal O_{(X,V)}^\dagger) \to (]X[_V, i_X^{-1}\mathcal O_V).
\]
In particular, for $(X,V) \in \AN^{\dagger} T$ and $M \in \calO^{\dagger}_T$, the realization $M_{X,V}$ is an $i^{-1}_{X}\calO_V$-module. For any morphism $(f,u)\colon (X',V') \to (X,V)$ in $\AN^{\dagger}(T)$, one has a map
\[
(]f[_{u}^\dagger, ]f[_{u*}) \colon (]X'[_{V'}, i_{X',V'}^{-1} \mathcal O_{V'}) \to (]X[_V, i_{X,V}^{-1} \mathcal O_V).
\]
of ringed sites, and functoriality gives transition maps
\[
\phi^{\dagger}_{f,u}\colon ]f[_{u}^{\dagger}M_{X,V} \to M_{X',V'}
\]
which satisfy the usual cocycle compatibilities.

\end{remark}

We can promote the description of $T_{\AN^{\dagger}}$ in Proposition \ref{P:sheafReal} to descriptions of the categories $\Mod \calO^{\dagger}_{T}$ of $\calO^{\dagger}_{T}$-modules, $\QCoh \calO^{\dagger}_{T}$ of quasi-coherent $\calO^{\dagger}_{T}$-modules (i.e., modules which locally have a presentation), and $\Mod_{\fp} \calO^{\dagger}_{T}$ of locally finitely presented $\calO^{\dagger}_{T}$-modules.

\begin{proposition}[\cite{leStum:site}, Proposition 2.3.6]
\label{P:moduleReal}

Let $T$ be a presheaf on $\AN^{\dagger}(\calV)$. Then the category $\Mod \calO^{\dagger}_T$ (resp. $\QCoh \calO^{\dagger}_T$, $\Mod_{\fp} \calO^{\dagger}_T$) is equivalent to the following category :
\begin{enumerate}

\item An object is a collection of sheaves $M_{X,V} \in \Mod i^{-1}_X \calO_V$ (resp. $\QCoh i^{-1}_X\calO_V$, $\Coh i^{-1}_X\calO_V$) on $]X[_V$ indexed by $(X, V) \in \AN^\dagger(T)$ and, for each $(f, u) \colon (X', V') \to (X, V)$, a morphism (resp. isomorphism) $\phi^{\dagger}_{f,u}\colon ]f[_u^{\dagger}M_{X,V} \to M_{X', V'}$, such that as $(f,u)$ varies, the maps $\phi^{\dagger}_{f,u}$ satisfy the usual compatibility condition.

\item A morphism is a collection of morphisms $M_{X,V} \to M'_{X,V}$ compatible with the morphisms $\phi_{f,u}^{\dagger}$.

\end{enumerate}

\end{proposition}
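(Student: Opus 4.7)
The plan is to extend the set-theoretic equivalence of Proposition \ref{P:sheafReal} by tracking the $\calO^\dagger_T$-module structure, then to carve out the quasi-coherent and finitely presented subcategories inside the resulting equivalence for $\Mod \calO^\dagger_T$.

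For the forward functor on modules, given $M \in \Mod \calO^\dagger_T$, Remark \ref{R:ringedRealization} says that $\varphi_{(X,V)}$ is a morphism of ringed topoi, so the realization $M_{X,V}$ is naturally an $i_X^{-1}\calO_V$-module. The set-theoretic transition $\phi_{f,u}$ from Proposition \ref{P:sheafReal} is then $]f[_u^{-1} i_X^{-1}\calO_V$-linear, and passing via the identification $]f[_u^\dagger (-) = i_{X'}^{-1}\calO_{V'} \otimes_{]f[_u^{-1} i_X^{-1}\calO_V} ]f[_u^{-1}(-)$ produces $\phi^\dagger_{f,u}$; the cocycle compatibility follows from that of Proposition \ref{P:sheafReal} combined with the canonical isomorphism of iterated $\dagger$-pullbacks. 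For the inverse, given a datum $\{M_{X,V},\phi^\dagger_{f,u}\}$, I would compose the $\phi^\dagger_{f,u}$ with the natural map $]f[_u^{-1} M_{X,V} \to ]f[_u^\dagger M_{X,V}$ to recover a set-theoretic datum, apply the inverse of Proposition \ref{P:sheafReal} to produce the underlying sheaf of sets, and note that $\calO^\dagger_T$ itself corresponds to the family $\{i_X^{-1}\calO_V\}$ with its canonical transitions, which allows the various $i_X^{-1}\calO_V$-actions to assemble into an $\calO^\dagger_T$-action. That these constructions are mutually inverse on both objects and morphisms reduces, at the level of underlying abelian sheaves, to Proposition \ref{P:sheafReal}.

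For the quasi-coherent and finitely presented cases, both directions use the right exactness of $]f[_u^\dagger$. In the forward direction, a local presentation $\bigoplus_I \calO^\dagger_{(X,V)} \to \bigoplus_J \calO^\dagger_{(X,V)} \to M|_{(X,V)} \to 0$ on $\AN^\dagger(X,V)$ pushes forward under $\varphi_{(X,V)*}$ to a local presentation of $M_{X,V}$, which is therefore quasi-coherent (and coherent if $I, J$ are finite); the same presentation pulled back under $]f[_u^\dagger$ matches the one for $M_{X',V'}$, forcing $\phi^\dagger_{f,u}$ to be an isomorphism. For the converse, given data with (quasi-)coherent realizations and isomorphic transitions, one must lift a local presentation of some $M_{X,V}$ on the tube $]X[_V$ to a local presentation of $M$ on $\AN^\dagger(X,V)$. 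I expect this last step to be the main obstacle, and I would handle it using the pretopology described after Definition \ref{D:overconvergentSite}: refine $(X,V)$ to some $(X,W)$ with $W$ a neighborhood of $]X[_V$ over which generating sections of $M_{X,V}$ extend to sections of $\calO^\dagger_{(X,W)}$-modules, then refine further so that the relations extend as well, with compatibility across every $\phi^\dagger_{f,u}$ automatic from the isomorphism hypothesis on the transition maps.
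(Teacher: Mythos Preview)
The paper does not supply its own proof of this proposition: it is stated with the citation \cite{leStum:site}*{Proposition 2.3.6} and no proof environment follows. So there is nothing to compare your argument against on the paper's side; any comparison would have to be with le~Stum's original argument, which lies outside this paper.

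That said, your sketch for the general $\Mod \calO^\dagger_T$ case is sound: it really is Proposition~\ref{P:sheafReal} plus bookkeeping of the ring action, exactly as you describe. For the quasi-coherent and finitely presented cases, however, there is a step you are gliding over. You write that a local presentation on $\AN^\dagger(X,V)$ ``pushes forward under $\varphi_{(X,V)*}$ to a local presentation of $M_{X,V}$''. The functor $\varphi_{(X,V)*}$ is a right adjoint, hence only left exact in general, so a priori you lose surjectivity on the right and exactness in the middle. You need an independent reason why the realized sequence remains a presentation. One clean way around this is to first verify that $M \in \Mod_{\fp}\calO^\dagger_T$ is a crystal (i.e., the $\phi^\dagger_{f,u}$ are isomorphisms), and only then invoke the equivalence of Remark~\ref{R:crystalEquivalence}, under which $\varphi_{(X,V)*}$ restricted to crystals is an exact equivalence; that gives coherence of $M_{X,V}$ for free. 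Alternatively, work in the other direction with the right-exact functor $\varphi_{(X,V)}^*$ and show it matches your construction. Either way, the forward step in your argument as written needs justification beyond ``push the presentation forward''.
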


\begin{definition}[\cite{leStum:site}, Definition 2.3.7]

Define the \defi{category of overconvergent crystals on $T$}, denoted $\Cris^{\dagger} T$, to be the full subcategory of $\Mod \calO^{\dagger}_T$ such that the transition maps $\phi_{f,u}^{\dagger}$ are isomorphisms.

\end{definition}

\begin{example}

The sheaf $\calO^{\dagger}_{T}$ is a crystal, and in fact $\QCoh \calO^{\dagger}_T \subset \Cris^{\dagger} T$.

\end{example}

\begin{remark}
\label{R:crystalEquivalence}

It follows immediately from the definition of the pair $(\varphi_{X,V}^*, \varphi_{X,V*})$ of functors that $\varphi_{X,V*}$ of a $\calO^{\dagger}_{(X,V)}$-module is a crystal, and that the adjunction $\varphi_{X,V}^* \varphi_{X,V*}E \to E$ is an isomorphism if $E$ is a crystal. If follows that the pair $\varphi_{X,V}^*$ and $\varphi_{X,V*}$ induce an equivalence of categories
\[
\Cris^{\dagger} (X,V) \to \Mod i_X^{-1}\mathcal O_V;
\]
see \cite{leStum:site}*{Proposition 2.3.8} for more detail.

\end{remark}

One minor subtlety is the choice of an overconvergent variety as a base.

\begin{definition}
\label{D:overconvergentBase}

Let $(C,O) \in \AN^{\dagger}(\calV)$ be an overconvergent variety and let $T \to C$ be a morphism from a presheaf on $\Sch_k$ to $C$. Then $T$ defines a presheaf on $\AN^{\dagger}(C,O)$ which sends $(X,V) \to (C,O)$ to $\Hom_C(X,T)$, which we denote by $T/O$. We denote the associated site by $\AN^{\dagger}(T/O)$, and when $(C,O) = (S_k,S)$ for some formal $\calV$-scheme $S$ we write instead $\AN^{\dagger}(T/S)$.

\end{definition}

The minor subtlety is that there is no morphism $T \to h_{(C,O)}$ of presheaves on $\AN^{\dagger}(\calV)$. A key construction is the following.

\begin{definition}[\cite{leStum:site}*{Paragraph after Corollary 1.4.15}]
\label{D:imagePresheaf}

Let $(X,V) \to (C,O) \in \AN^{\dagger}(\calV)$ be a morphism of overconvergent varieties. We denote by $X_V/O$ the image presheaf of the morphism $(X,V) \to X/O$, considered as a morphism of presheaves. Explicitly, a morphism $(X',V') \to X/O$ lifts to a morphism $(X',V') \to X_V/O$ if and only if there exists a morphism $(X',V') \to (X,V)$ over $X/O$, and in particular different lifts $(X',V') \to (X,V)$ give rise to the same morphism $(X',V') \to X_V/O$. When $(C,O) = (\Spec k, \calM(K))$, we may write $X_V$ instead $X_V/\calM(K)$.

\end{definition}

Many theorems will require the following extra assumption of \cite{leStum:site}*{Definition 1.5.10}. Recall that a morphism of formal schemes $P' \to P$ is said to be proper at a subscheme $X \subset P'_k$ if, for every component $Y$ of $\overline{X}$, the map $Y \to P_k$ is proper (see \cite{leStum:site}*{Definition 1.1.5}).

\begin{definition}
\label{D:realization}

Let $(C,O) \in \AN^{\dagger}(\calV)$ be an overconvergent variety and let $f\colon X \to C$ be a morphism of $k$-schemes. We say that a formal morphism $(f,u)\colon (X,V) \to (C,O)$, written as
\[
\xymatrix{
X \ar@{^(->}[r] \ar[d]^f & P \ar[d]^v & V\ar[d]^u \ar[l]\\
C \ar@{^(->}[r] & Q & O\ar[l]
},
\]
is a \defi{geometric realization} of $f$ if $v$ is proper at $X$, $v$ is smooth in a neighborhood of $X$, and $V$ is a neighborhood of $]X[_{P_K \times_{Q_K} O}$ in $P_K \times_{Q_K} O$. We say that $f$ is \defi{realizable} if there exists a geometric realization of $f$.

\end{definition}

\begin{example}

Let $Q$ be a formal scheme and let $C$ be a closed subscheme of $Q$. Then any projective morphism $X \to C$ is realizable.

\end{example}

We need a final refinement to $\AN^{\dagger}(\calV)$.

\begin{definition}
\label{D:good}

We say that an overconvergent variety $(X,V)$ is \defi{good} if there is a good neighborhood $V'$ of $]X[_V$ in $V$ (i.e., every point of $]X[_V$ has an affinoid neighborhood in $V$). We say that a formal scheme $S$ is good if the overconvergent variety $(S_k,S_K)$ is good. We define the \defi{good overconvergent site} $\AN^{\dagger}_{\g}(T)$ to be the full subcategory of $\AN^{\dagger}(T)$ consisting of good overconvergent varieties. Given a presheaf $T \in \AN^{\dagger}(\calV)$, we denote by $T_{\g}$ the restriction of $T$ to $\AN^{\dagger}_{\g}(\calV)$.

\end{definition}

Note that localization commutes with passage to good variants of our sites (e.g., there is an isomorphism $\AN^{\dagger}_{\g}(\calV)_{/T_{\g}} \cong \AN^{\dagger}_{\g}(T)$). When making further definitions we will often omit the generalization to $\AN^{\dagger}_{\g}$ when it is clear.
\vspace{7pt}

The following proposition will allow us to deduce facts about $\Mod_{\fp} \calO^{\dagger}_{X_g}$ from results about $(X,V)$ and $X_V$.

\begin{proposition}
\label{P:coverings}

Let $(C,O) \in \AN^{\dagger}_{\g}(\calV)$ be a good overconvergent variety and let $(X,V) \to (C,O)$ be a geometric realization of a morphism $X \to C$ of schemes. Then the following are true:
\begin{itemize}
\item [(i)] The map $(X,V)_{\g} \to (X/O)_{\g}$ is a covering in $\AN^{\dagger}_{\g}(\calV)$.
\item [(ii)] There is an equivalence of topoi $(X_V/O)_{\AN^{\dagger}_g} \cong (X/O)_{\AN^{\dagger}_g}$.
\item [(iii)] The natural pullback map $\Cris_{\g}^{\dagger} X/O \to \Cris_{\g}^{\dagger} X_{V}/O$ is an equivalence of categories.
\item [(iv)] Suppose that $(X,V)$ is good. Then the natural map $\Cris^{\dagger} X_V/O \to \Cris_{\g}^{\dagger} X_V/O$ is an equivalence of categories.
\end{itemize}

\end{proposition}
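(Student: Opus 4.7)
The plan is to establish the four claims sequentially, with (i) as the main technical input from which (ii)--(iv) follow by standard topos-theoretic manipulations; each part essentially repackages a theorem of le Stum, so the proof should cite \cite{leStum:site} at the appropriate places and only supply arguments for the glue.

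For (i), I would verify the covering condition in the induced topology on $\AN^{\dagger}_{\g}(X/O)$ directly. A morphism $(X',V') \to X/O$ in the presheaf category amounts to a $k$-morphism $X' \to X$ over $C$ together with the overconvergent structure on $(X',V')$. I claim that locally on $V'$ any such morphism lifts to a morphism $(X',V') \to (X,V)$ of overconvergent varieties. Properness of $v\colon P \to Q$ at $X$ ensures that the tube $]X[_{P_K \times_{Q_K} O}$ is independent of the choice of compactification and is compatible with base change, while smoothness of $v$ in a neighborhood of $X$ supplies the local lifting property via the infinitesimal lifting criterion applied analytically on a strict neighborhood. Goodness is preserved by shrinking $V'$ to an affinoid neighborhood of each point of $]X'[_{V'}$. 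The upshot is the covering property.

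For (ii), I would use (i) together with the fact that $X_V/O$ is \emph{by definition} the image presheaf of $(X,V) \to X/O$, so we obtain a canonical factorization
\[
(X,V) \To X_V/O \To X/O
\]
in which the first arrow is a covering by (i) and the second arrow is a monomorphism of presheaves (two formal morphisms into $X_V/O$ over $X/O$ agree once the underlying $k$-morphisms agree). Sheafifying with respect to the analytic topology, the first map remains surjective and the second remains a monomorphism; a surjection-that-factors-through-a-monomorphism forces the monomorphism to be an isomorphism of sheaves. Restricting to the good site (which is preserved by the covering argument of (i)) gives the desired equivalence of topoi. Part (iii) is then immediate: the equivalence of ringed topoi of (ii) restricts to an equivalence on the full subcategories of crystals, because the crystal condition depends only on the local data $(M_{X',V'}, \phi^{\dagger}_{f,u})$ in the sense of Proposition \ref{P:moduleReal}.

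Part (iv) requires showing that when $(X,V)$ is good, the restriction functor from $\Cris^{\dagger} X_V/O$ to $\Cris_{\g}^{\dagger} X_V/O$ is an equivalence. The key observation is that any overconvergent variety $(X',V')$ over $X_V/O$ admits, locally on $V'$, a lift to $(X,V)$ by construction of the image presheaf, and since $(X,V)$ is good we can further shrink $V'$ to a good affinoid neighborhood mapping into $V$. Thus every object of $\AN^{\dagger}(X_V/O)$ is covered by good objects, so the inclusion $\AN^{\dagger}_{\g}(X_V/O) \hookrightarrow \AN^{\dagger}(X_V/O)$ is a comorphism of sites inducing an equivalence of topoi, hence an equivalence on crystals.

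The main obstacle will be (i): the lifting property that underlies the covering claim is genuinely analytic-geometric and must be extracted from le Stum's tube calculus together with the smooth/proper assumption on the realization. In practice I expect to invoke the relevant statements from \cite{leStum:site}*{Section 1.5 and Proposition 1.5.14 ff.} directly rather than redo the analysis; once the covering property is in hand, (ii)--(iv) are formal consequences of site/topos localization and the crystal description of Proposition \ref{P:moduleReal}.
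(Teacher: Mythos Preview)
Your overall strategy matches the paper's: it simply cites \cite{leStum:site}*{1.5.14, 1.5.15} for (i) and (ii), says (iii) follows from (ii), and declares (iv) ``clear.'' Your sketch of (i)--(iii) is a reasonable unpacking of exactly those citations, and you correctly anticipate at the end that (i) should ultimately be quoted rather than reproved.

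There is, however, a gap in your argument for (iv). You claim that any $(X',V')$ over $X_V/O$ can be covered by good objects because it maps to the good $(X,V)$ and hence ``we can further shrink $V'$ to a good affinoid neighborhood mapping into $V$.'' But goodness of $(X,V)$ only says that points of $]X[_V$ have affinoid neighborhoods \emph{in $V$}; pulling such a neighborhood back along $u\colon V' \to V$ gives an open subset of $V'$, not an affinoid one, so you have not produced a good covering of $(X',V')$. In particular, the claimed equivalence of \emph{topoi} $\AN^{\dagger}(X_V/O) \simeq \AN^{\dagger}_{\g}(X_V/O)$ is not established by this reasoning.

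Fortunately the proposition only asks for an equivalence of \emph{crystals}, and for that there is a more direct route (which is presumably what the paper means by ``clear''). By definition of the image presheaf, every $(X',V')$ over $X_V/O$ admits a morphism to $(X,V)$, so the crystal condition forces $M_{X',V'} \cong\, ]f[_u^{\dagger} M_{X,V}$; thus a crystal on $X_V/O$ is determined by its realization on $(X,V)$ together with the descent datum on $(X,V)\times_{X_V/O}(X,V)$ (cf.\ Remark~\ref{R:crystalEquivalence}). When $(X,V)$ is good, these objects already lie in $\AN^{\dagger}_{\g}(X_V/O)$, so the same description computes $\Cris_{\g}^{\dagger} X_V/O$, and the restriction functor is an equivalence. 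Replace your covering argument for (iv) with this observation.
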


\begin{proof}

The first two claims are \cite{leStum:site}*{1.5.14, 1.5.15}, the third follows from the second, and the last is clear.

\end{proof}

In particular, the natural map $\Mod_{\fp} \calO^{\dagger}_{X_{\g}} \to \Mod_{\fp} \calO^{\dagger}_{(X_V)_{\g}} \cong \Mod_{\fp} \calO^{\dagger}_{X_V}$ is an equivalence of categories.
\vspace{7pt}

\subsection{Technical lemmas}
\label{S:siteLemmas}

We state here a few technical lemmas that will be useful in the proof of Theorem \ref{T:mainCohDescentTheoremPrelude}.

\begin{lemma}
\label{L:baseChange}

Let $(Y,W) \to (X,V)$ be a morphism of overconvergent varieties. Let $Y' = Y\times_X X'$ and $W' = W \times_V X'$. Then $(Y',W') \cong (Y,W)\times_{(X,V)} (Y',V')$.

\end{lemma}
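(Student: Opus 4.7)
The plan is to verify that $(Y', W')$, equipped with the evident projections, satisfies the universal property of the fibre product $(Y,W)\times_{(X,V)}(X',V')$ in $\AN^{\dagger}(\calV)$. (I read the target of the map $W' = W\times_V V'$, since base-changing along a morphism $(X',V')\to(X,V)$ is the content at issue; the displayed $X'$ in place of $V'$ appears to be a typographical slip.) The subtle point throughout is that morphisms in $\AN^{\dagger}(\calV)$ are equivalence classes of formal morphisms modulo strict neighborhoods, so all constructions need only be made up to a strict neighborhood of the tube.

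First I would make the construction explicit. Choose formal representatives: embeddings $X\hookrightarrow P_X$, $Y\hookrightarrow P_Y$, $X'\hookrightarrow P_{X'}$ with compatible morphisms $P_Y\to P_X$ and $P_{X'}\to P_X$ of topologically finitely presented formal $\calV$-schemes, together with the structure maps $V\to (P_X)_K$, $W\to (P_Y)_K$, $V'\to (P_{X'})_K$ inducing the given formal morphisms. Set $P_{Y'} := P_Y\times_{P_X}P_{X'}$, which is again a topologically finitely presented formal $\calV$-scheme since the property is preserved under fibre products. The scheme $Y' = Y\times_X X'$ embeds locally closedly into $(P_{Y'})_k = (P_Y)_k\times_{(P_X)_k}(P_{X'})_k$, and the analytic variety $W' = W\times_V V'$ (formed via the fibre product in analytic varieties over $(P_X)_K$) admits a canonical map to $(P_{Y'})_K = (P_Y)_K\times_{(P_X)_K}(P_{X'})_K$. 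This makes $(Y', W')$ an overconvergent variety, and the two projections define formal morphisms $(Y',W')\to(Y,W)$ and $(Y',W')\to(X',V')$ whose compositions to $(X,V)$ agree. A routine check, using that specialization and analytification commute with fibre products, gives the tube identification
\[
]Y'[_{W'} \;\cong\; ]Y[_W\times_{]X[_V}]X'[_{V'},
\]
which in particular shows the construction is independent, up to strict neighborhood, of the choices of $P_X, P_Y, P_{X'}$.

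Next I would verify the universal property. Let $(Z,U)$ be any overconvergent variety endowed with morphisms $(Z,U)\to(Y,W)$ and $(Z,U)\to(X',V')$ whose compositions to $(X,V)$ coincide. Using the explicit description of morphisms in $\AN^{\dagger}(\calV)$ recalled after Definition \ref{D:overconvergentSite}, each morphism is represented by a pair consisting of a $k$-scheme map and an analytic map defined on a strict neighborhood of $]Z[_U$, which are geometrically pointwise compatible. The universal properties of the fibre product in $k$-schemes (for the underlying scheme maps $Z\to Y$ and $Z\to X'$ over $X$) and in analytic varieties (for the analytic maps on a common strict neighborhood, using $W' = W\times_V V'$) produce a unique pair factoring both given morphisms through $(Y', W')$. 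The geometric pointwise compatibility for this induced morphism follows by restricting the diagram defining compatibility for the two given morphisms and using the tube identification above; compatibility after arbitrary isometric extensions $K'/K$ is similarly inherited.

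The only real obstacle is bookkeeping around strict neighborhoods: one must verify that replacing the chosen formal representatives by strictly equivalent ones does not change the resulting morphism in $\AN^{\dagger}(\calV)$, and that the analytic fibre product $W\times_V V'$ is well defined up to a strict neighborhood of $]Y'[_{W'}$. Both reduce to the compatibility of fibre products with open immersions of analytic varieties and with the tube formation $]\cdot[$, which is standard. This completes the verification of the universal property and hence the isomorphism.
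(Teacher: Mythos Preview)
Your proposal is correct and is essentially an explicit unpacking of what the paper invokes: the paper's own proof is a one-line citation (``This is the comment after \cite{leStum:site}*{Proposition 1.3.10}''), and that comment amounts to exactly the componentwise computation of fibre products in $\AN^{\dagger}(\calV)$ that you carry out. Your identification of the two typographical slips (that $W' = W\times_V V'$ and that the right-hand side should read $(Y,W)\times_{(X,V)}(X',V')$) is also correct.
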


\begin{proof}

This is the comment after \cite{leStum:site}*{Proposition 1.3.10}.

\end{proof}

\begin{lemma}
\label{L:closedExactness}

Let $p \colon (X',V') \to (X,V)$ be a morphism of overconvergent
varieties such that the induced map on tubes is an inclusion of a
closed subset. Then $p_*$ is exact.

\end{lemma}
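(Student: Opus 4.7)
The plan is to prove right exactness (left exactness is automatic since $p_{\AN^\dagger *}$ is a right adjoint) by showing that $p_*$ preserves epimorphisms of abelian sheaves. The key is to reduce the claim to a statement about pushforward along a closed immersion of topological spaces, which is classically exact.

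First, I would use the realization description of Proposition \ref{P:sheafReal} to check epimorphisms objectwise on tubes. A morphism $\calF \to \calG$ of sheaves on $\AN^\dagger(X,V)$ is an epimorphism if and only if, for every object $(Y,W) \in \AN^\dagger(X,V)$, the induced morphism $\calF_{Y,W} \to \calG_{Y,W}$ of sheaves on $]Y[_W$ is surjective; indeed, the coverings in $\AN^\dagger(X,V)$ are (up to strict neighborhoods) generated by open covers of the analytic part, so local surjectivity in the topos is the same as local surjectivity on each realization.

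Next, I would compute the realization $(p_*\calF)_{Y,W}$ for an arbitrary $(Y,W) \to (X,V)$. By Lemma \ref{L:baseChange}, the fiber product $(Y,W) \times_{(X,V)} (X',V')$ is $(Y',W') := (Y \times_X X', W \times_V V')$; let $q\colon (Y',W') \to (Y,W)$ be the base-change map. The restriction of $p_*\calF$ to $\AN^\dagger(Y,W)$ is identified with $q_*(\calF|_{\AN^\dagger(Y',W')})$, because for any $(Z,U) \to (Y,W)$, composition with $(Y,W) \to (X,V)$ followed by base change along $(X',V') \to (X,V)$ agrees with base change of $(Y',W') \to (Y,W)$ along $(Z,U) \to (Y,W)$. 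Checking commutativity of the square of morphisms of ringed sites formed by $\varphi_{Y,W}$, $\varphi_{Y',W'}$, $q$ and $]q[$ then yields the identification
\[
(p_*\calF)_{Y,W} \;\cong\; \, ]q[_* \calF_{Y',W'}.
\]

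To finish, I would invoke the fact that the tube functor commutes with base change, so $]Y'[_{W'} \cong \, ]Y[_W \times_{]X[_V} \, ]X'[_{V'}$ and $]q[$ is the pullback of the closed immersion of topological spaces $]p[\colon \, ]X'[_{V'} \hookrightarrow \, ]X[_V$. Since closed immersions are preserved under pullback, $]q[$ is itself a closed immersion, and pushforward along a closed immersion of topological spaces is exact (the stalks are either preserved or zero). Thus, applying $]q[_*$ to the surjection $\calF_{Y',W'} \twoheadrightarrow \calG_{Y',W'}$ yields a surjection, i.e., $(p_*\calF)_{Y,W} \twoheadrightarrow (p_*\calG)_{Y,W}$, which gives the required epimorphism $p_*\calF \twoheadrightarrow p_*\calG$.

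The main obstacle is the bookkeeping needed to establish the identification $(p_*\calF)_{Y,W} \cong \, ]q[_* \calF_{Y',W'}$: one must verify that restriction to a localized subcategory, pushforward along the site-morphism $q$, and realization via $\varphi_{-,-}$ all commute as expected. Once this is in place, the remaining topological input — exactness of $j_*$ for a closed immersion $j$ of topological spaces — is standard and closes the argument immediately.
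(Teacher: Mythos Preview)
Your proposal is correct and follows essentially the same approach as the paper's own proof: reduce to realizations via an arbitrary base change $(Y,W) \to (X,V)$, observe that the base-changed map on tubes is still a closed inclusion, and invoke exactness of pushforward along closed inclusions of topological spaces. The paper states this in two sentences without spelling out the identification $(p_*\calF)_{Y,W} \cong \, ]q[_* \calF_{Y',W'}$, which you have made explicit; your version is a fleshed-out form of the same argument.
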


\begin{proof}

It suffices to check that, for any cartesian diagram
\[
\xymatrix{
(Y', W') \ar[r] \ar[d]^{p'}
& \ar[d] (X', V')
\\
(Y, W) \ar[r]
& (X,V)
}
\]
the map induced on tubes by $p'$ is exact; the lemma follows since for
any base change of $p$, the induced map on tubes is also an inclusion of a
closed subset and such maps are exact.

\end{proof}

\section{Cohomological descent for overconvergent crystals}
\label{S:overCohDescent}

In this section we prove Theorem \ref{T:mainCohDescentTheoremPrelude}. The proof naturally breaks into cases: Zariski covers, modifications, finitely presented flat covers, and proper surjections. The full proof fails
without the goodness assumption, but many special cases (e.g., cohomological descent with respect to Zariski hypercovers) hold without the goodness assumption.

\subsection{Zariski Covers}
\label{S:etaleCohDescent}

We begin with the case of a Zariski cover. One can restate the main result of \cite{leStum:site}*{Section 3.6} as the statement that a Zariski covering is universally of cohomological descent (see Definition \ref{D:cohomologicalDescent}) with respect to crystals. Throughout this subsection we omit distinction between $\AN^{\dagger}$ and $\AN^{\dagger}_g$, but remark here that each result is true for either site.
\vspace{4pt}

Some care is needed to interpret le Stum's results in the language of cohomological descent; to that end, we first prove a few lemmas that will be useful in later proofs as well.

\begin{lemma}
\label{L:tubeIsoDescent}

Let $p_0 =(f_0,u_0) \colon (X_0,V_0) \to (X_{-1},V_{-1})$ be a morphism of overconvergent varieties such that
\begin{itemize}
\item [(i)] the induced map $\,]f_0[ \,\colon \, ]X_0[_{V_0} \to
\,]X_{-1}[_{V_{-1}}$ is an isomorphism, and
\item [(ii)] the natural map $]f_0[^{-1} i_{X_{-1}}^{-1}\calO_{V_{-1}}
\to i_{X_0}^{-1}\calO_{V_{0}} $
is an isomorphism.
\end{itemize}
Then $p_0$ is universally of cohomological descent with respect to crystals.

\end{lemma}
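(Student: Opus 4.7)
My plan is to apply Corollary \ref{C:exactImpliesCD}(ii) to the augmented simplicial overconvergent variety $p\colon (X_\bullet, V_\bullet) \to (X_{-1}, V_{-1})$ associated to $p_0$ (formed inside $\widetilde{\AN^{\dagger}(\calV)}$), applied to an arbitrary crystal $\calF$; universality will follow by checking that the two hypotheses of the lemma persist after arbitrary base change. Combined, the two hypotheses say exactly that $]f_0[$ is an isomorphism of ringed spaces $(]X_0[_{V_0}, i_{X_0}^{-1}\calO_{V_0}) \to (]X_{-1}[_{V_{-1}}, i_{X_{-1}}^{-1}\calO_{V_{-1}})$. Using Lemma \ref{L:baseChange} and the fact that tubes commute with fiber products, I would first verify that every $p_n\colon (X_n, V_n) \to (X_{-1}, V_{-1})$ in the coskeleton diagram induces an isomorphism of ringed tubes, and that this property is preserved under base change along any $(X'_{-1}, V'_{-1}) \to (X_{-1}, V_{-1})$.

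Under the realization equivalence of Remark \ref{R:crystalEquivalence}, $\Cris^{\dagger}(X_n, V_n) \simeq \Mod i_{X_n}^{-1}\calO_{V_n}$, and by Remark \ref{R:ringedRealization} the realization of $p_n^*\calF$ at $(X_n, V_n)$ is $]f_n[^{\dagger}E$ where $E$ is the realization of $\calF$ at $(X_{-1}, V_{-1})$. Since $]f_n[$ is an isomorphism of ringed tubes, $]f_n[^{\dagger}E$ is canonically identified with $E$. This identifies the cosimplicial sheaf $r_*p^*\calF$ termwise with $\calF$, and the face and degeneracy maps with identities; the associated alternating-sum complex $\ch r_*p^*\calF$ is therefore the standard contractible constant cosimplicial complex, which is exact in positive degrees with kernel equal to $\calF$. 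This gives conditions (b) and (c) of Corollary \ref{C:exactImpliesCD}(ii).

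The main obstacle is condition (a), namely the vanishing $R^j p_{n*}(p_n^*\calF) = 0$ for $j > 0$. Intuitively this holds because $p_n^*$ is an equivalence of crystal categories with quasi-inverse $p_{n*}$, so the latter is already exact on crystals; the issue is that one needs the vanishing in the full overconvergent topos, not merely on the subcategory of crystals. To handle this I would reduce to the tube side via the compatibility of the realization functor $\varphi_{X_{-1},V_{-1}*}$ with pushforward (Remark \ref{R:ringedRealization}), so that the realization of $R^j p_{n*}(p_n^*\calF)$ at any $(Y, W) \to (X_{-1}, V_{-1})$ is controlled by $R^j]f_n'[_*$ for the base-changed tube map $]f_n'[\,\colon\,]Y'[_{W'}\, \to\, ]Y[_W$; since the latter is again an isomorphism of ringed spaces (by the base-change stability established in the first paragraph), these higher direct images vanish.

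With (a), (b), (c) verified, Corollary \ref{C:exactImpliesCD}(ii) gives $\calF \isomto \mathbb{R}p_*p^*\calF$. Because the hypotheses of the lemma are preserved under arbitrary base change in $\widetilde{\AN^{\dagger}(\calV)}$, the same argument applied after base change yields universal cohomological descent with respect to crystals.
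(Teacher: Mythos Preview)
Your proposal is correct and follows essentially the same route as the paper: both verify the hypotheses of Corollary~\ref{C:exactImpliesCD}(ii), both exploit that every $p_n$ (and every face map $p_i^j$) induces an isomorphism of ringed tubes so that the \v{C}ech complex degenerates to the constant cosimplicial object whose alternating-sum complex is exact, and both argue that the hypotheses are stable under base change to get universality. The only cosmetic difference is in how you handle condition~(a): the paper invokes Lemma~\ref{L:closedExactness} (an isomorphism on tubes is in particular a closed inclusion, so $p_{i*}$ is exact on all abelian sheaves), whereas you reconstruct that lemma in the special case of a tube isomorphism by checking realizations after base change---the underlying mechanism is identical.
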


\begin{remark}
\label{R:quasiImmersionIsOK}

Note that condition (ii) of Lemma \ref{L:tubeIsoDescent} is satisfied if $u_0$ is a finite quasi-immersion and thus in particular is satisfied if $u_0$ is an isomorphism or if $V_0$ is a neighborhood of $\,]X_{-1}[_{V_{-1}}$. (Note that these are non-trivial conditions, since $f_0$ may not be an isomorphism.) Moreover, condition (i) holds if $f_0$ is surjective.

We also note that condition (ii) is necessary; in general, a morphism
\[
(X,]X[_V) \to (X,V)
\]
is not universally of cohomological descent for crystals, since the
\v{C}ech complex is not exact. For example, when $X = \A^1$ and $V =
\P^1_K$, condition (ii) fails, and indeed the \v{C}ech complex
\[
0 \to K\{t\}^{\dagger} \to K\{t\} \xrightarrow{0} K\{t\} \to \ldots
\]
is not exact.

\end{remark}

\begin{proof}

We check the hypotheses of Corollary \ref{C:exactImpliesCD} (ii). Denote by $p_i \colon(X_i,V_i) \to (X_{-1},V_{-1})$ the $i+1$ fold fiber product of the map $p_0\colon (X_0, V_0) \to (X_{-1},V_{-1})$. Noting that formation of tubes commutes with base change (and in particular that the map on tubes induced by $p_i$ is an isomorphism), it follows from Lemma \ref{L:closedExactness} that $p_{i,*}$ is exact.

For each $i \geq 0$ and $j > 0$, each projection $p_i^j\colon (X_i,V_i) \to (X_{i-1},V_{i-1})$ also induces an isomorphism $]X_i[_{V_i}\cong \, ]X_{i-1}[_{V_{i-1}} $ on tubes. Moreover, for a fixed $i$, the maps $p_i^j$ are all equal. Finally, note that by condition (ii), the natural maps $\calF \to p_{i,*}^jp_i^{j,*}\calF$ are all isomorphisms (contrast with Remark \ref{R:quasiImmersionIsOK}). It follows that the \v{C}ech complex $\calF \to p_*p^*\calF$ is exact (since the maps alternate between an isomorphism and the zero map). The lemma follows.

\end{proof}

\begin{lemma}
\label{L:openTubeDescent}

Let $\{(X_i,V_i) \to (X,V)\}$ be a collection of morphisms of overconvergent varieties such that each $V_i \to V$ is an open immersion and $\{\, ]X_i[_{V_i}\}$ is an open covering of $\,]X[_V$. Then
the map
\[
u_0\colon \coprod (X_i,V_i) \to (X,V)
\]
is universally of cohomological descent with respect to crystals.

\end{lemma}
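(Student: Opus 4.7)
The plan is to factor the map $\coprod_i (X_i, V_i) \to (X,V)$ through a covering of $(X,V)$ that fixes $X$, and then apply Lemma \ref{L:tubeIsoDescent} to the remaining factor. Since $\,]X_i[_{V_i}$ is by hypothesis open in $\,]X[_V$, and $i_X \colon \,]X[_V \hookrightarrow V$ is a locally closed immersion (so $\,]X[_V$ carries the subspace topology), for each $i$ I can choose an open subset $W_i \subset V$ with $W_i \cap \,]X[_V = \,]X_i[_{V_i}$. The family $\{(X,W_i) \to (X,V)\}$ then satisfies the pretopology of Definition \ref{D:overconvergentSite}: $\bigcup W_i$ is an open neighborhood of $\,]X[_V$ in $V$ because it contains $\bigcup \,]X_i[_{V_i} = \,]X[_V$, and $\bigcup \,]X[_{W_i} = \bigcup \,]X_i[_{V_i} = \,]X[_V$. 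Hence $\coprod_i (X, W_i) \to (X,V)$ is an analytic covering, and so is universally of cohomological descent for every abelian sheaf (in particular, every crystal) by Theorem \ref{T:CDexamples}(i).

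For the second factor, set $V_i' := V_i \cap W_i$, interpreted as an open subset of $V_i$ via the open immersion $V_i \hookrightarrow V$. A diagram chase shows that $\,]X_i[_{V_i} \subset V_i'$, so $V_i'$ is a neighborhood of the tube inside $V_i$, and the morphism $(X_i, V_i') \to (X_i, V_i)$ is a strict neighborhood; thus the two objects are isomorphic in $\AN^{\dagger}(\calV)$. The further open inclusion $V_i' \hookrightarrow W_i$ produces a morphism $(X_i, V_i') \to (X, W_i)$ whose induced map on tubes is the identity, since one checks directly that $\,]X_i[_{V_i'} = \,]X[_{W_i} = \,]X_i[_{V_i}$. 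Moreover, because $V_i' \subset W_i$ is an open neighborhood of $\,]X[_{W_i}$, the structure sheaf condition (ii) of Lemma \ref{L:tubeIsoDescent} holds by Remark \ref{R:quasiImmersionIsOK}. Lemma \ref{L:tubeIsoDescent} then shows that each $(X_i, V_i') \to (X, W_i)$ is universally of cohomological descent with respect to crystals.

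Combining via Theorem \ref{T:topologySingle}(d) gives that $\coprod_i (X_i, V_i') \to \coprod_i (X, W_i)$ is universally of cohomological descent with respect to crystals, and Theorem \ref{T:topologySingle}(c) then promotes this to the composition $\coprod_i (X_i, V_i) \cong \coprod_i (X_i, V_i') \to (X,V)$. Universality is preserved because, by Lemma \ref{L:baseChange}, the hypotheses (open immersion on the $V$-coordinates and open cover on tubes) are stable under base change in $\AN^{\dagger}$. I expect the main obstacle to be the bookkeeping required to verify the tube identities $\,]X_i[_{V_i'} = \,]X[_{W_i} = \,]X_i[_{V_i}$ and that the formal morphism $(X_i, V_i') \to (X, W_i)$ is well defined; once these point-set and diagram checks are in place, the proof is a formal assembly of the cited results.
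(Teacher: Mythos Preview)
Your proof is correct and follows essentially the same strategy as the paper's: factor through an analytic covering $\coprod_i (X,W_i)\to(X,V)$ and handle the remaining factor with Lemma~\ref{L:tubeIsoDescent}. The only cosmetic differences are that the paper takes the smaller open $W_i'\cap V_i$ (your $V_i'$) as the $V$-coordinate of the intermediate object, so that the map $(X_i,W_i)\to(X,W_i)$ is the identity on the analytic side, and it invokes Theorem~\ref{T:topologySingle}(b) rather than the strict-neighborhood isomorphism $(X_i,V_i')\cong(X_i,V_i)$; your choices work equally well. Your final sentence about universality via Lemma~\ref{L:baseChange} is unnecessary, since each ingredient you cite already gives \emph{universal} cohomological descent.
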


\begin{proof}

Let $W_i'$ be an open subset of $V$ such that $W_i'\, \cap \, ]X[_V = \, ]X_i[_{V_i}$ (which exists since $]X_i[_{V_i}$ is an open subset of $]X[_V$). Let $W_i$ be the preimage of $W_i'$ under the map $V_i \to V$. By Corollary \ref{T:topologySingle} (b) it suffices to prove that the map $u_0'\colon \coprod (X_i,W_i) \to (X,V)$ is universally of cohomological descent with respect to crystals.

Let $v_i$ denote the morphism $(X_i,\, W_i) \to (X,\, V)$. Then the morphism $u'_0$ factors as
\[
\coprod (X_i,\, W_i) \xrightarrow{\coprod{v_i}}
\coprod (X,\, W_i) \xrightarrow{w_0}
(X,\, V).
\]
The map $w_0$ is a covering in $\AN^{\dagger}(X,V)$ and thus universally of cohomological descent by \ref{T:CDexamples} (i). Since, by the construction of $W_i$, the induced map $]X_i[_{W_i} \to \, ]X[_{W_i}$ is an isomorphism, it follows from Lemma \ref{L:tubeIsoDescent} that each map $v_i$ is universally of cohomological descent with respect to crystals. By Theorem \ref{T:topologySingle} (d), $\coprod v_i$ is universally of cohomological descent with respect to crystals; by \ref{T:topologySingle} (c), the composition is also universally of cohomological descent with respect to crystals and the lemma follows.

\end{proof}

\begin{definition}
We say that a collection $\{X_i\}$ of subspaces of a topological space $X$ is a \defi{locally finite covering} if $X = \cup X_i$ and if each point $x$ of $X$ admits an open neighborhood $U_x$ on which $\{X_i \cap\, U_x\}$ admits a
finite refinement which covers $U_x$.

\end{definition}

\begin{lemma}
\label{L:closedTubeDescent}

Let $\{(f_i,u_i) \colon (X_i,V_i) \to (X,V)\}$ be a collection of morphisms of overconvergent varieties such that
\begin{itemize}
\item[(a)] the maps $]u_i[\colon ]X_i[_{V_i} \to \, ]X[_V$ are closed inclusions of topological spaces,
\item[(b)] $\{\,]X_i[_{V_i}\}$ is a locally finite covering of $\, ]X[_V$, and
\item [(c)] for each $i$, the natural map $]f_i[^{-1} i_{X}^{-1}\calO_{V}
\to i_{X_i}^{-1}\calO_{V_{i}} $
is an isomorphism.
\end{itemize}
Then the map
\[
p \colon \coprod (X_i,V_i) \to (X,V)
\]
is universally of cohomological descent with respect to crystals.

\end{lemma}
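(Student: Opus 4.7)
The plan is to apply Corollary \ref{C:exactImpliesCD}(ii) to the augmented simplicial object $p\colon X_\bullet \to (X,V)$ associated to $p_0 = \coprod (f_i, u_i)$, mimicking the proof of Lemma \ref{L:openTubeDescent} but with closed rather than open realizations on the tube. Fix a crystal $\calF$ with realization $\calG := \calF_{X,V}$, and for each tuple write $Z_{(i_0,\ldots,i_n)} := \,]X_{i_0}[_{V_{i_0}} \cap \cdots \cap\, ]X_{i_n}[_{V_{i_n}}$, with $j_{(i_0,\ldots,i_n)}\colon Z_{(i_0,\ldots,i_n)} \hookrightarrow \,]X[_V$ the (closed) inclusion. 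Since tubes commute with fiber products, every component of $p_n$ induces a closed immersion on its tube, so Lemma \ref{L:closedExactness} immediately gives $\mathbb{R}^j p_{n*} p_n^* \calF = 0$ for all $j > 0$ and $n \geq 0$, establishing the first hypothesis of Corollary \ref{C:exactImpliesCD}(ii).

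The second step is to verify exactness of the \v{C}ech complex of realizations on $]X[_V$, and this is where hypothesis (c) enters. Together with Proposition \ref{P:moduleReal} and Remark \ref{R:ringedRealization}, hypothesis (c) ensures that for a crystal the overconvergent pullback $]f[^\dagger$ agrees with ordinary inverse image $]f[^{-1}$, so the realization of $p_n^*\calF$ along the $(i_0,\ldots,i_n)$-component is precisely $\calG|_{Z_{(i_0,\ldots,i_n)}}$. Consequently the \v{C}ech complex translates to the concrete form
\[
0 \to \calG \to \prod_{i_0} (j_{i_0})_*\calG|_{Z_{i_0}} \to \prod_{(i_0, i_1)} (j_{(i_0, i_1)})_*\calG|_{Z_{(i_0, i_1)}} \to \cdots
\]
on $]X[_V$.

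To prove exactness I would check stalks. For $x \in \,]X[_V$, let $I_x := \{i : x \in \,]X_i[_{V_i}\}$, which is nonempty by the covering hypothesis (b), and finite by local finiteness (choose a neighborhood $U$ of $x$ meeting only finitely many $Z_i$, so on $U$ the products above are in fact finite). Using that for a closed inclusion $j_Z$ the stalk $((j_Z)_*\calH)_x$ equals $\calH_x$ if $x \in Z$ and vanishes otherwise, the stalk complex at $x$ collapses to
\[
0 \to \calG_x \to \prod_{i_0 \in I_x} \calG_x \to \prod_{(i_0, i_1) \in I_x^2} \calG_x \to \cdots,
\]
which is the augmented simplicial cochain complex of the simplex on $I_x$ with coefficients $\calG_x$. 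Contractibility of the simplex yields exactness, completing the hypotheses of Corollary \ref{C:exactImpliesCD}(ii) and giving cohomological descent with respect to $\calF$. For the ``universally'' assertion, all three hypotheses of the lemma transfer under an arbitrary base change $(Y,W) \to (X,V)$: tubes commute with fiber products so closed inclusions and local finiteness of the tube-level cover pull back, and the structure sheaf identity in (c) transfers by the same fiber-product compatibility of inverse images of $i_X^{-1}\calO_V$; the preceding argument then applies verbatim to $\coprod(X_i \times_X Y,\, V_i \times_V W) \to (Y,W)$ and the pullback of $\calF$.

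The main obstacle I anticipate is the translation in the second step, i.e., correctly identifying the abstract \v{C}ech complex on the overconvergent site with the explicit closed-restriction complex on the tube. Without hypothesis (c), the pullback of a crystal involves a non-trivial tensor product of sheaves of rings and the realizations would not reduce to restrictions of $\calG$; hypothesis (c) is precisely what forces this reduction. Once the identification is in hand, the combinatorial stalk argument (contractibility of the simplex) and the base-change bookkeeping are straightforward.
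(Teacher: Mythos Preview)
Your approach is essentially the paper's: both invoke Corollary \ref{C:exactImpliesCD}(ii), use Lemma \ref{L:closedExactness} for the vanishing of $\mathbb{R}^q p_{n*}$, and verify \v{C}ech exactness via the locally finite closed cover of the tube (the paper cites \cite{leStum:site}*{Proposition 3.1.4} for this last step, whereas you spell out the stalk computation directly). One organizational point worth tightening: exactness of $\ch r_* p^*\calF$ in Corollary \ref{C:exactImpliesCD}(ii) is a statement about sheaves on $\AN^\dagger(X,V)$, so it must be checked on realizations at \emph{every} $(X',V') \to (X,V)$, not just at $(X,V)$ itself---your base-change paragraph supplies exactly this, so fold it into the \v{C}ech-exactness step (as the paper does) rather than treating it as a separate ``universality'' afterthought.
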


\begin{proof}

Let $F \in \Cris^{\dagger}(X,V)$ be a crystal. By Corollary \ref{C:exactImpliesCD}, it suffices to prove that (i) $\mathbb{R}^qp_{\bullet,*}p_{\bullet}^*F = 0$ for $q > 0$, and (ii) the \v{C}ech complex $F \to p_{\bullet,*}p_{\bullet}^*F$ is exact. Let $p_j$ be the $j$-fold fiber product of $p$. By the spectral sequence (Equation \ref{spectralSequence}), it suffices to prove that $\mathbb{R}^qp_{j*}p_j^*F = 0$ for $ j \geq 0$ and $q > 0$; since for each $j$, $p_j$ is a disjoint union of maps which induce closed inclusions on tubes, this follows from Lemma \ref{L:closedExactness}.

For (ii), it suffices to check that, for every map $\pi\colon (X',V') \to (X,V)$, the realization with respect to $(X',V')$ of the \v{C}ech complex of $\pi^{-1}F$ with respect to
\[
p' \colon \coprod (X'_i,V'_i) \to (X',V')
\]
(where $X'_i = X_i \times_X X'$ and $V_i' = V_i \times_V V'$) is exact, which (noting that our hypotheses are stable under base change) since the tubes form a locally finite closed covering, follows from condition (c) and the proof of \cite{leStum:site}*{Proposition 3.1.4}.

\end{proof}

\begin{corollary}
\label{C:formalLocal}

Let $(X \hookrightarrow P \leftarrow V)$ be an overconvergent variety and let $\{P_i\}_{i \in I}$ be a collection of Zariski open formal subschemes of $P$. Let $(X_i,V) = (X\times_P P_i \hookrightarrow P \leftarrow V)$ and let $(X_i,V_i) = (X\times_P P_i \hookrightarrow P_i \leftarrow V\times_{P_K}(P_i)_K)$. Suppose that $\{X_i\}$ forms a locally finite Zariski open cover of $X$. Then the following are true.
\begin{itemize}
\item[(1)] The map
$\coprod (X_i, V) \to (X,V)$ is universally of cohomological descent with respect to crystals.
\item[(2)] Suppose $(X,V)$ is good and that the tubes
$\{\,]X_i[_{V_i}\}$ cover a neighborhood of $\,]X[_V$ in $V$.
Then $\coprod (X_i, V_i) \to (X,V)$ is universally of
cohomological descent with respect to finitely presented crystals.
\end{itemize}

\end{corollary}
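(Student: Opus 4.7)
The plan is to derive both statements as direct applications of Lemma \ref{L:openTubeDescent}, whose hypotheses I will verify in each case. Recall that lemma requires (i) each $V_i \to V$ to be an open immersion of analytic varieties, and (ii) the collection of tubes $\{\,]X_i[_{V_i}\}$ to form an open cover of $\,]X[_V$.

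For part (1), take $V_i = V$ so that $V_i \to V$ is trivially an open immersion. Here $\,]X_i[_V$ is the preimage of $X_i$ under the specialization $V \to P_K \to P_k$; since $P_i \subset P$ is Zariski open and $X \hookrightarrow P$ is a locally closed immersion, $X_i = X \times_P P_i$ is Zariski open in $X$. By continuity of specialization, $\,]X_i[_V$ is open in $\,]X[_V$. The locally finite Zariski cover $\{X_i\}$ of $X$ then yields the open cover $\{\,]X_i[_V\}$ of $\,]X[_V$. Applying Lemma \ref{L:openTubeDescent} gives the desired universal cohomological descent with respect to crystals.

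For part (2), the map $V_i = V \times_{P_K} (P_i)_K \to V$ is an open immersion since $(P_i)_K \to P_K$ is an open immersion of analytic varieties (the generic fiber of an open formal subscheme embeds as an open analytic subspace). Moreover $\,]X_i[_V \subseteq V_i$, because any point of $V$ specializing to $X_i \subseteq (P_i)_k$ already lies in the preimage $V_i$ of $(P_i)_K$. Consequently $\,]X_i[_{V_i} = \,]X_i[_V$, which is open in $\,]X[_V$ as in (1). The hypothesis that these tubes cover a neighborhood of $\,]X[_V$ in $V$ implies in particular that their union contains $\,]X[_V$, so that $\{\,]X_i[_{V_i}\}$ is an open cover of $\,]X[_V$. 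Lemma \ref{L:openTubeDescent} again applies, yielding universal descent with respect to all crystals, from which the claim for finitely presented crystals follows a fortiori.

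There is no serious obstacle; the main point is the verification of the tube identity $\,]X_i[_{V_i} = \,]X_i[_V$ in part (2), which reduces to compatibility of the tube construction with the cartesian square defining $V_i$ and the observation that $V_i$ already contains the relevant tube.
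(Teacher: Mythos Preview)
Your argument rests on a topological claim that is false in the Berkovich setting: specialization is \emph{anti}-continuous, not continuous. Concretely, if $X_i \subset X$ is Zariski open, then the tube $\,]X_i[_V$ is a \emph{closed} subset of $\,]X[_V$, not an open one. (Think of $P = \Spf \calV\{t\}$: the preimage under specialization of the open set $\{t \neq 0\} \subset \A^1_k$ is the annulus $\{|t| = 1\}$, which is closed in the Berkovich closed disc.) The same issue undermines part (2): for a Zariski open formal subscheme $P_i \subset P$, the induced map $(P_i)_K \to P_K$ on Berkovich generic fibers is \emph{not} an open immersion; it identifies $(P_i)_K$ with a closed analytic domain of $P_K$. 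So in both parts the hypotheses of Lemma~\ref{L:openTubeDescent} are simply not met, and the lemma cannot be invoked.

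The paper proceeds differently for exactly this reason. For (1) it observes that anti-continuity makes $\{\,]X_i[_V\}$ a locally finite \emph{closed} cover of $\,]X[_V$, so Lemma~\ref{L:closedTubeDescent} applies (the required compatibility of structure sheaves holds because the maps $V \to V$ are identities). For (2) the goodness hypothesis is used essentially: one reduces to $V$ affinoid, and then the extra assumption that the tubes $\{\,]X_i[_{V_i}\}$ cover a \emph{neighborhood} of $\,]X[_V$ in $V$ lets one invoke Tate's acyclicity theorem. Your proof discards both the goodness assumption and the ``neighborhood'' strengthening (using only that the tubes cover $\,]X[_V$ itself), which is a sign that something has gone wrong; indeed Remark~\ref{R:quasiImmersionIsOK} after the corollary explains why these hypotheses are genuinely needed.
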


\begin{remark}
By Remark \ref{R:quasiImmersionIsOK}, the extra hypothesis on the tubes in claim (2) is necessary.
\end{remark}

\begin{proof}

Since specialization is anti-continuous, the tubes form a locally finite closed covering and claim (1) thus follows from Lemma \ref{L:closedTubeDescent}. For claim (2), since $(X,V)$ is good we may assume that $V$ is affinoid. The claim then follows by Tate's Acylicity Theorem \cite{BGR}*{8.2, Corollary 5}.

\end{proof}

We say that a morphism of schemes $X \to Y$ over $k$ is universally of cohomological descent (resp., with respect to a sheaf $\calF \in \Ab(Y_{\AN^{\dagger}})$) if the associated morphism $X_{\AN^{\dagger}} \to Y_{\AN^{\dagger}}$ is universally of cohomological descent (resp., with respect to $\calF$).

\begin{theorem}
\label{T:zariskiDescent}

Let $(C,O)$ be an overconvergent variety and let $X \to C$ be a morphism of algebraic varieties. Let $\{U_i\}_{i \in I}$ be a locally finite covering of $X$ by open subschemes (resp., a covering of $X$ by closed subschemes) and denote by $\alpha_0\colon U = \coprod_{i \in I} U_i \to X$ the induced morphism of schemes. Denote by $\alpha\colon U_{\bullet} \to X$ the 0-coskeleton of $\alpha_0$. Then the morphism of topoi $U_{\bullet}/O_{\AN^{\dagger}} \to X/O_{\AN^{\dagger}}$ is universally of cohomological descent with respect to $\calF$.

\end{theorem}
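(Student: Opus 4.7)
The plan is to reduce, via the definition of ``universally'' and base change, to an application of Corollary \ref{C:formalLocal}. Because universal cohomological descent is preserved under arbitrary base change, it suffices to verify, for every morphism $(X', V') \to X/O$ in $\AN^\dagger(\calV)$, that the base-changed morphism
\[
\coprod_{i \in I} (X'_i, V'_i) \longrightarrow (X', V'), \qquad X'_i := U_i \times_X X',
\]
is of cohomological descent with respect to $\calF|_{(X',V')}$, for a suitable choice of analytic spaces $V'_i$. This reduction matches the hypothesis format of Corollary \ref{C:formalLocal}.

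I would next realize the cover by opens in the formal scheme $P'$ underlying $(X', V') = (X' \subset P' \leftarrow V')$. In the closed-cover case, each $X'_i$ is closed in $X'$, and simply taking $V'_i := V'$ suffices; hypothesis (c) of Lemma \ref{L:closedTubeDescent} holds tautologically because the map $V'_i \to V'$ is the identity. In the open-cover case, since $X' \hookrightarrow P'$ is by definition locally closed, after replacing $P'$ by a smaller open I may assume $X'$ is closed in $P'_k$; then each open $X'_i \subset X'$ has the form $X' \cap (P'_i)_k$ for the open formal subscheme $P'_i := P' \setminus \overline{X' \setminus X'_i}$, and I set $V'_i := V' \times_{P'_K} (P'_i)_K$, an open of $V'$.

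Finally I would apply Corollary \ref{C:formalLocal}: part~(1) in the closed case and part~(2) in the open case. The local finiteness of $\{X'_i\}$ is inherited from $\{U_i\}$, and by anti-continuity of specialization, $\{\,]X'_i[_{V'_i}\}$ is a locally finite closed covering of $]X'[_{V'}$ in the closed case, and covers a neighborhood of $]X'[_{V'}$ in the open case. Part~(2) requires a further passage to the good site, which is harmless when $\calF$ is a locally finitely presented crystal by Proposition \ref{P:coverings}. The main obstacle is the open case: lifting the open cover $\{U_i\}$ of $X$ to open formal subschemes of $P'$ requires careful use of the locally closed embedding $X' \hookrightarrow P'$, in particular the ability to shrink $P'$ so as to present the complements $X' \setminus X'_i$ as honest closed subsets of $P'_k$. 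Once this formal-local cover is constructed, cohomological descent for the $0$-coskeleton $U_\bullet \to X$ follows directly from Corollary \ref{C:formalLocal}.
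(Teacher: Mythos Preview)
Your proposal has a genuine gap: the two cases are swapped relative to the lemmas that actually apply. Specialization is anti-continuous, so an \emph{open} subscheme $U_i \subset X$ gives a \emph{closed} inclusion of tubes $]U'_i[_{V'} \hookrightarrow\,]X'[_{V'}$, while a \emph{closed} subscheme gives an \emph{open} inclusion of tubes. Hence in the closed-cover case your appeal to Lemma~\ref{L:closedTubeDescent} fails at hypothesis~(a): the maps on tubes are open inclusions, not closed ones. Likewise, Corollary~\ref{C:formalLocal} is stated only for Zariski open covers (both parts arise from open formal subschemes $P_i$), so invoking part~(1) ``in the closed case'' is not valid. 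The paper handles the closed-subscheme case via Lemma~\ref{L:openTubeDescent} and the open-subscheme case via Lemma~\ref{L:closedTubeDescent}, exactly the reverse of your assignment.

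There is a second, smaller issue. In the open-subscheme case you pass to $(X'_i,V'_i)$ with $V'_i = V'\times_{P'_K}(P'_i)_K$ and appeal to Corollary~\ref{C:formalLocal}(2). This does work (your $P'_i$ do cover $P'$, so the $V'_i$ cover $V'$ and hence a neighborhood of the tube), but it forces the goodness hypothesis and restricts the conclusion to finitely presented crystals. The paper's argument is both simpler and stronger: it keeps the analytic space fixed, using $(U'_i,V)$ rather than $(U'_i,V'_i)$, and applies Lemma~\ref{L:closedTubeDescent} directly (equivalently, Corollary~\ref{C:formalLocal}(1)). Since the map $V\to V$ is the identity, hypothesis~(c) is automatic, and one obtains descent for arbitrary crystals with no goodness assumption. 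Your construction of the open formal subschemes $P'_i$ is therefore unnecessary; the entire proof goes through with the single analytic space $V$ and the commutative square
\[
\xymatrix{
\coprod_{\AN^{\dagger}(X/O)} \coprod_i (U'_i, V) \ar[r] \ar[d] & \coprod_i U_i \ar[d]\\
\coprod_{\AN^{\dagger}(X/O)} (X', V) \ar[r] & X
}
\]
together with Corollary~\ref{C:BCD}.
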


\begin{proof}

The proof for $\alpha_{\AN^{\dagger}_{\g}}$ is identical to the proof for $\alpha_{\AN^{\dagger}}$. We note that the map $\coprod (X',V) \to X$, where the coproduct is taken over $\AN^{\dagger}(X/O)$, is a covering in the canonical topology on $\AN^{\dagger}(X/O)$ and thus universally of cohomological descent. Setting $U_i' = X' \times_X U_i$, the diagram (of sheaves on $\AN^{\dagger} O$)
\[
\xymatrix{
\coprod_{\AN^{\dagger}(X/O)} \coprod_i (U'_i, V) \ar[r] \ar[d]
& \ar[d] \coprod_i U_i
\\
\coprod_{\AN^{\dagger}(X/O)} (X', V) \ar[r]
& X
}
\]
commutes. By Lemma \ref{L:closedTubeDescent} (resp., Lemma \ref{L:openTubeDescent}) the maps $\coprod (U'_i,V) \to (X',V)$ are universally of cohomological descent with respect to crystals; the theorem thus follows from Corollary \ref{C:BCD}.

\end{proof}

\begin{remark}
\label{R:schemeVsheafUnion}

Let $\{X_i\}$ be a collection of schemes. Then the presheaf on $\AN^{\dagger} \calV$ represented by the disjoint union $\coprod X_i$ (as schemes) is \emph{not} equal to the disjoint union (as presheaves) of the presheaves represented by each $X_i$. Nonetheless, Theorem \ref{T:topologySingle} (d) also holds for the map in $\AN^{\dagger} \calV$ represented by a disjoint union $\coprod Y_i \to \coprod X_i$ of morphisms of schemes (taken as a disjoint union of schemes instead of as presheaves on $\AN^{\dagger} \calV$); indeed, the sheafification of $\coprod X_i$ is the same in each case, and in general for a site $C$ and a presheaf $F \in \widehat{C}$ with sheafification $F^a$, there is a natural equivalence
\[
\widetilde{C_{/F}} \cong \widetilde{C}_{/F^a}
\]
of topoi.

\end{remark}

\begin{corollary}
\label{C:zariskiLocal}

Let $(C,O)$ be an overconvergent variety and let $X \to Y$ be a morphism of algebraic varieties over $C$. Let $\{Y_i\}$ be a locally finite open cover of $Y$ and denote by $X_i$ the fiber product $X \times_Y Y_i$. Then $X/O_{\AN^{\dagger}} \to Y/O_{\AN^{\dagger}}$ is universally of cohomological descent with respect to crystals if and only if for each $i$, the map $X_i/O_{\AN^{\dagger}} \to Y_i/O_{\AN^{\dagger}}$ is universally of cohomological descent with respect to crystals.

\end{corollary}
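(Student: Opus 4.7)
The plan is to apply Theorem \ref{T:zariskiDescent} to pass to the Zariski covering $\pi_0\colon \coprod_i Y_i \to Y$, combine this with the base-change machinery of Theorem \ref{T:topologySingle}(a) and Corollary \ref{C:BCD}, and then split the disjoint union fibrewise using Theorem \ref{T:topologySingle}(d). The point is that the corollary is really a statement about one crystal at a time: if we fix a crystal $\calF$ on $Y/O$, everything reduces to chasing it around the cartesian square
\[
\xymatrix{
\coprod_i X_i \ar[r] \ar[d] & X \ar[d] \\
\coprod_i Y_i \ar[r]^-{\pi_0} & Y.
}
\]
By Theorem \ref{T:zariskiDescent}, the bottom horizontal arrow $\pi_0$ is universally of cohomological descent with respect to every crystal $\calF$ on $Y/O$, and the pullback $\pi_0^*\calF$ is identified in the natural way with the disjoint union of the restrictions $\calF|_{Y_i}$, each of which is again a crystal by Proposition \ref{P:moduleReal}.

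For the ``local $\Rightarrow$ global'' implication I would fix a crystal $\calF$ on $Y/O$. The local hypothesis says that each $X_i \to Y_i$ is universally of cohomological descent with respect to the crystal $v_i^*\calF$, where $v_i\colon Y_i \hookrightarrow Y$ is the open immersion. Combined with the cohomological descent of $\pi_0$ with respect to $\calF$, Corollary \ref{C:BCD} applied to the square above yields that $X \to Y$ is universally of cohomological descent with respect to $\calF$, and since $\calF$ was arbitrary one obtains UCD with respect to crystals.

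For the ``global $\Rightarrow$ local'' implication I would run the same square backwards. Given $\calF$, Theorem \ref{T:topologySingle}(a) combined with the assumption on $X \to Y$ and the UCD of $\pi_0$ forces $\coprod X_i \to \coprod Y_i$ to be UCD with respect to $\coprod \calF|_{Y_i}$. Inspecting the proof of Theorem \ref{T:topologySingle}(d), which exhibits the simplicial morphism of topoi as fibered over the indexing set $I$, this is equivalent to each $X_i \to Y_i$ being UCD with respect to $\calF|_{Y_i}$. Since every crystal on $Y_i/O$ arises as the restriction of a crystal on $Y/O$ (for instance by using the realization description of Proposition \ref{P:moduleReal} and extending the realization data along the open immersion of tubes), this proves the remaining direction.

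I expect the main obstacle to be the essentially bookkeeping step of showing that the converse to Theorem \ref{T:topologySingle}(d) holds componentwise for $\pi_0 = \coprod v_i$, so that UCD for the coproduct with respect to $\coprod \calF|_{Y_i}$ really is equivalent to UCD componentwise; once this is in place the corollary becomes essentially a diagram chase combining Theorem \ref{T:zariskiDescent}, Theorem \ref{T:topologySingle}(a), and Corollary \ref{C:BCD}.
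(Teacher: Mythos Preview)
Your overall strategy matches the paper's: apply Theorem \ref{T:zariskiDescent} and Theorem \ref{T:topologySingle}(a),(d) to the cartesian square over $\coprod Y_i \to Y$. Your use of Corollary \ref{C:BCD} for the ``local $\Rightarrow$ global'' direction is a harmless repackaging of the same ingredients.

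The problematic step is the extension claim at the end of your ``global $\Rightarrow$ local'' argument. It is not true in general that every crystal on $Y_i/O$ arises as the restriction of a crystal on $Y/O$, and the gesture toward Proposition \ref{P:moduleReal} does not supply one: you would have to produce, for every overconvergent variety $(X',V')$ over $Y$ whose structure map does not factor through $Y_i$, a realization compatible with the given data over $Y_i$, and there is no canonical way to do this (the situation is analogous to extending a coherent sheaf across an open immersion). Fortunately the step is unnecessary. Throughout the paper, ``universally of cohomological descent with respect to crystals'' is used in the sense that after \emph{any} base change $S' \to Y$ one has cohomological descent for every crystal on $S'$; this is how Lemmas \ref{L:tubeIsoDescent}--\ref{L:closedTubeDescent} and Theorem \ref{T:zariskiDescent} are actually proved and applied. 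With that reading, the ``global $\Rightarrow$ local'' direction is immediate: $X_i \to Y_i$ is the base change of $X \to Y$ along the open immersion $Y_i \hookrightarrow Y$, and any further base change $S' \to Y_i$ together with a crystal on $S'$ is already handled by the hypothesis on $X \to Y$. This is precisely what the combination of Theorem \ref{T:topologySingle}(a) and (the ``if and only if'' established in the proof of) (d) records, so you can simply delete the extension step.
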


\begin{proof}

This follows from Theorems \ref{T:zariskiDescent} and \ref{T:topologySingle} (a) and (d) applied to the diagram of sheaves on $\AN^{\dagger} (Y/O)$ induced by the cartesian diagram of schemes
\[
\xymatrix{
\coprod X_i \ar[r] \ar[d]
& \ar[d] X
\\
\coprod Y_i \ar[r]
& Y
}.
\]

\end{proof}

The following direct corollary to Theorem \ref{T:zariskiDescent} allows us to reduce to the integral case.

\begin{corollary}
\label{C:components}

Let $Y$ be an algebraic variety. Let $\{Y_i'\}$ be the set of irreducible components of $Y$ and let $Y_i := (Y_i')_{\text{red}}$ be the reduction of $Y_i'$. Then the morphism $\coprod Y_i \to Y$ is universally of cohomological descent with respect to crystals.

\end{corollary}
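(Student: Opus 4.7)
The plan is to apply Theorem \ref{T:zariskiDescent} directly in its closed--covering form, with the collection $\{Y_i\}$ playing the role of the closed cover of $X = Y$, so that the statement will follow with essentially no additional work beyond verifying the hypotheses.

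The first step is to check that $\{Y_i\}$ is a locally finite closed covering of $Y$. Each $Y_i$ is a closed subscheme of $Y$ by construction (the reduction of an irreducible component is a closed subscheme of $Y$). Their topological union is $Y$, since passing to the reduction does not alter the underlying topological space and the irreducible components of $Y$ cover $Y$ set-theoretically. For local finiteness, I would use the hypothesis that $Y$ is an algebraic variety: by definition $Y$ admits a locally finite cover by schemes of finite type over $k$, and each such finite-type scheme, being Noetherian, has only finitely many irreducible components. Hence each point of $Y$ has an open neighborhood meeting only finitely many of the $Y_i$.

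Once this topological/geometric input is in hand, Theorem \ref{T:zariskiDescent}, applied with $X = Y$ and the closed covering $\{Y_i\}$ (over any chosen base $(C,O)$, e.g.\ $(\Spec k, \calM(K))$), gives that the associated $0$-coskeleton $U_\bullet \to Y$ of $\coprod Y_i \to Y$ is universally of cohomological descent with respect to crystals, which is precisely the conclusion of the corollary.

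I do not anticipate a genuine obstacle. Behind Theorem \ref{T:zariskiDescent} in the closed case lies Lemma \ref{L:closedTubeDescent}, whose structure--sheaf hypothesis (c) is the only nontrivial input; but in the proof of Theorem \ref{T:zariskiDescent} one reduces to a situation where the closed pieces $Y_i \times_Y X'$ all embed in a common ambient overconvergent variety $(X',V)$, so condition (c) on the tubes' structure sheaves is automatic. Thus the whole argument amounts to verifying the local finiteness of the covering by irreducible components and invoking Theorem \ref{T:zariskiDescent}.
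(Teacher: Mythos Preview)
Your proposal is correct and matches the paper's approach: the paper gives no separate proof and simply labels the statement a ``direct corollary to Theorem~\ref{T:zariskiDescent},'' which is exactly what you do by applying that theorem with the closed covering $\{Y_i\}$.

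One small inaccuracy in your commentary (not in the argument itself): in the proof of Theorem~\ref{T:zariskiDescent} the two lemmas are used in the opposite order from what you wrote. Because specialization is anti-continuous, a \emph{closed} subscheme of $X$ yields an \emph{open} tube, so the closed-cover case is handled by Lemma~\ref{L:openTubeDescent}, not Lemma~\ref{L:closedTubeDescent}. Consequently the local-finiteness hypothesis you verify is not actually needed for the closed case (note the phrasing of Theorem~\ref{T:zariskiDescent}: ``a locally finite covering of $X$ by open subschemes (resp., a covering of $X$ by closed subschemes)''), and there is no structure-sheaf condition (c) to worry about. This does not affect the validity of your proof, only the side remarks.
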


\subsection{Modifications}
\label{ss:blowups}

In order to apply Raynaud-Gruson's theorem on `flattening stratifications', we now address cohomological descent for modifications. The following lemma is a translation of \cite{Tsuzuki:rigidDescent}*{Lemma 3.4.5}, with a minor variation in that we work with non-archimedean analytic spaces. Note also that some care (e.g., the use of Lemma \ref{L:tubeIsoDescent}) is necessary to apply his argument to the overconvergent site.

\begin{lemma}[\cite{Tsuzuki:rigidDescent}*{Lemma 3.4.5}]
\label{L:CDblowUp}

Let $Y$ be a scheme and let $Z$ be a closed subscheme whose sheaf of ideals $I$ is generated by two elements $f$ and $g$. Then the blow up $X \to Y$ of $Y$ with respect to $I$ is universally of cohomological descent with respect to crystals.

\end{lemma}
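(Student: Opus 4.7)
The plan is to reduce, via the Zariski-local nature of cohomological descent and the standard Zariski cover of a blow-up by two affine charts, to an explicit Cech computation on an analytic model. By Corollary \ref{C:zariskiLocal} the question is local on $Y$, so I may assume $Y$ is affine and admits a good geometric realization $(Y, V) = (Y \hookrightarrow Q \leftarrow V)$ in which $f$ and $g$ lift to global sections $\tilde f, \tilde g$ of $\mathcal O_Q$, with $V$ affinoid. The blow-up $X = \Proj \bigoplus_{n\geq 0} I^n$ with $I = (f,g)$ then has the well-known Zariski cover $X = X_0 \cup X_1$ with
\[
X_0 = \Spec \mathcal{O}_Y[u]/(fu - g), \qquad X_1 = \Spec \mathcal{O}_Y[v]/(gv - f),
\]
each embedded as a closed subscheme of $\mathbb{A}^1_Y$ cut out by one equation in $f, g$.

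By Theorem \ref{T:zariskiDescent} the Zariski covering $X_0 \sqcup X_1 \to X$ is universally of cohomological descent with respect to crystals, so by Theorem \ref{T:topologySingle}(b) it suffices to prove that the composition $p\colon X_0 \sqcup X_1 \to Y$ is universally of cohomological descent with respect to every crystal $F$ on $Y_{\AN^\dagger}$. The associated Cech nerve decomposes as
\[
(X_0 \sqcup X_1)^{n+1}_Y = \bigsqcup_{(i_0, \ldots, i_n) \in \{0,1\}^{n+1}} X_{i_0} \times_Y \cdots \times_Y X_{i_n},
\]
and each summand is the explicit closed subscheme of $\mathbb{A}^{n+1}_Y$ cut out by $n+1$ equations in $f$ and $g$. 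Lifting these equations to $\tilde f, \tilde g$ furnishes each summand with a natural geometric realization as a closed analytic subspace of $V \times \mathbb{D}^{n+1}$ (with $\mathbb{D}$ the closed unit disk, coming from the formal model $\widehat{\mathbb{P}}^1_Q$), and by Proposition \ref{P:moduleReal} the Cech complex realizes on $\,]Y[_V$ to an explicit complex of $i_Y^{-1}\mathcal O_V$-modules whose terms are direct images from these tubes.

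By Corollary \ref{C:exactImpliesCD}, the proof then reduces to verifying that this realized complex is a resolution of $F_{Y,V}$ and that the higher direct images $\mathbb{R}^q(p_i)_* p_i^* F$ vanish on each simplicial level. This analytic computation is the main obstacle. Following the strategy of \cite{Tsuzuki:rigidDescent}*{Lemma 3.4.5}, I would argue via the natural stratification $Y = (Y \setminus V(f)) \cup (Y \setminus V(g)) \cup Z$: on each of the first two strata one of the charts $X_i$ becomes an isomorphism onto the base, so the corresponding portion of the Cech complex collapses to an acyclic one by Tate's acyclicity theorem as in Corollary \ref{C:formalLocal}; on a suitable overconvergent neighborhood of $\,]Z[_V$, the tube of $X$ is identified with the analytic blow-up of $V$ along $\,]Z[_V$ and exactness of the Cech complex becomes a direct multivariable power-series computation that is tractable precisely because $I$ has only two generators, so the local model is the hypersurface $\tilde f u = \tilde g$ in $V \times \mathbb{D}$ rather than a higher-codimension complete intersection. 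The entire construction is stable under arbitrary base change $Y' \to Y$, which supplies the universal statement.
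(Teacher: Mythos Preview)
Your approach and the paper's diverge at the key structural step. You cover $X$ by its two standard Zariski charts $X_0, X_1$ and aim to analyze the \v{C}ech nerve of $X_0 \sqcup X_1 \to Y$ directly on realizations. The paper instead covers a neighborhood of the tube of $Y$ \emph{analytically}: after embedding $Y \hookrightarrow P$ with $P$ proper over $\calV$ and passing to an affine cover $\{P_i\}$, it splits the tube into $U_{i,1} = \,]\overline{Z_i}[_{(P_i)_K}$ and $U_{i,2} = \{|\hat f_i| > \lambda \text{ or } |\hat g_i| > \lambda\}$. Over $U_{i,1}$ the blow-up sits in $\mathbb{P}^1_{U_{i,1}}$, so the map $(X_i,V_{i,1}) \to (Y_i,U_{i,1})$ factors through a projection with a section and a map inducing an isomorphism on tubes; over $U_{i,2}$ the formal model $R_i \to P_i$ is already an isomorphism on generic fibers. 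Both pieces are then handled by the abstract Lemma~\ref{L:tubeIsoDescent} together with Theorem~\ref{T:CDexamples}(ii), and one concludes via Corollary~\ref{C:BCD}. No explicit \v{C}ech or power-series computation is carried out.

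Your sketch, by contrast, postpones everything to an unperformed computation. The vanishing of $\mathbb{R}^q(p_i)_* p_i^* F$ for the simplicial pieces $X_{i_0} \times_Y \cdots \times_Y X_{i_n} \to Y$ is not automatic: these are affine but not finite over $Y$, their fibers jump between points and affine lines, and the overconvergent higher direct images are not governed by analytic Stein-type vanishing alone. Your proposed ``direct multivariable power-series computation'' over a neighborhood of $\,]Z[_V$ is exactly what the paper's argument is designed to avoid; the remark preceding the lemma warns that adapting Tsuzuki's rigid computation to the overconvergent site requires the extra input of Lemma~\ref{L:tubeIsoDescent}, and your proposal does not engage with this. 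The tripartite decomposition $(Y \setminus V(f)) \cup (Y \setminus V(g)) \cup Z$ is the right heuristic, but its correct implementation is the paper's \emph{analytic open} cover $U_{i,1} \cup U_{i,2}$ of the base, not a Zariski stratification of $Y$ combined with a Zariski cover of $X$.
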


\begin{proof}

By Corollary \ref{C:zariskiLocal}, we may assume that $Y$ is affine and thus admits an embedding $Y \hookrightarrow P$ into a formal scheme $P$ which is proper over $\calV$. Let $\{P_i\}$ be a locally finite open affine cover of $P$ and let $Y_i = Y\times_P P_i$. Let $\overline{Y_i}$ be the closure of $Y_i$ in $P_i$.

Let $f_i$ (resp. $g_i$) denote the restriction of $f$ (resp. $g$) to $Y_i$ and denote by $Z_i$ the subscheme defined by $f_i$ and $g_i$. Choose lifts $\overline{f_i}$ and $\overline{g_i}$ of $f_i$ and $g_i$ to $\Gamma(\calO_{\overline{Y_i}})$ and define $\overline{Z_i}$ be the subscheme of $\overline{Y_i}$ defined by $\overline{f_i}$ and $\overline{g_i}$. Let $\overline{X_i}$ be the blow up of $\overline{Y_i}$ along $\overline{Z_i}$. Then $\overline{Z_i} \times_{\overline{Y_i}} Y_i = Z_i$ and $\overline{X_i} \times_{\overline{Y_i}} Y_i = X_i$.

Let $U_{i,1}$ be the tube $]\overline{Z_i}[_{(P_i)_K}$ of $\overline{Z_i}$ in $(P_i)_K$. Fix a rational number $\lambda$ in $(0,1)$, let $\hat{f_i}$ and $\hat{g_i}$ be lifts of $\overline{f_i}$ and $\overline{g_i}$ to $\Gamma(\calO_{P_i})$, and define
\[
U_{i,2} = \{x \in \, ]\overline{X_i}[_{(P_i)_K} : |\hat{f_i}| > \lambda \text{ or } |\hat{g_i}| > \lambda \};
\]
by construction $U_{i,1} \cup U_{i,2}$ is a cover of $]\overline{X_i}[_{(P_i)_K}$.

The scheme $\overline{X_i}$ is a subscheme of $\P^1_{P_i}$ (indeed, if $s$ and $t$ are coordinates for $\P^1$, then $\overline{X_i}$ is defined by the equation $\overline{f_i}t - \overline{g_i}s$). Set $V_{i,1} := U_{i,1} \times_{(P_i)_K} (\P^1_{P_i})_K \cong \P^1_{U_{i,1}}$. The map $(X_i,V_{i,1}) \to (Y_i,U_{i,1})$ of overconvergent varieties factors as $(X_i,V_{i,1}) \to (Y_i,V_{i,1})\to (Y_i,U_{i,1})$. The second map has a section and is thus universally of cohomological descent by Theorem \ref{T:CDexamples} (ii), and the first map is universally of cohomological descent with respect to crystals by Lemma \ref{L:tubeIsoDescent} (for the first map, note that since $X_i \to Y_i$ is surjective, the map on tubes is an isomorphism); we conclude that $(X_i,V_{i,1}) \to (Y_i,U_{i,1})$ universally of cohomological descent with respect to crystals by Theorem \ref{T:topologySingle} (c).

Let $R_i$ be the closed formal subscheme of $\P^1_{P_i}$ defined by the equation $\hat{f}_it - \hat{g}_is$. Then $(R_i)_K \to (P_i)_K$ is an isomorphism away from the vanishing locus of $\hat{f_i}$ and $\hat{g_i}$ in $(P_i)_K$. Denote by $V_{i,2}$ the pre-image of $U_{i,2}$ under the map $(R_i)_K \to (P_i)_K$. Then the map $(X_i,V_{i,2}) \to (Y_i,U_{i,2})$ of overconvergent varieties factors as $(X_i,V_{i,2}) \to (Y_i,V_{i,2})\to (Y_i,U_{i,2})$; the second map is an isomorphism, and the first map is universally of cohomological descent with respect to crystals by Lemma \ref{L:tubeIsoDescent} (again, since $X_i \to Y_i$ is surjective, the map on tubes is an isomorphism); we conclude that $(X_i,V_{i,2}) \to (Y_i,U_{i,2})$ is universally of cohomological descent with respect to crystals by Theorem \ref{T:topologySingle} (c).

We get a diagram
\[
\xymatrix{
\coprod\left( (X_i,V_{i,1})\coprod(X_i,V_{i,2}) \right)\ar[d]\ar[rrr]&&&X \ar[d]\\
\coprod\left( (Y_i,U_{i,1})\coprod(Y_i,U_{i,2}) \right)\ar[r] &\coprod (Y_i,(P_i)_K) \ar[r] &(Y,P_K) \ar[r]&Y
}.
\]
The middle horizontal map is universally of cohomological descent with respect to crystals by Lemma \ref{C:formalLocal} and the left and right horizontal maps are universally of cohomological descent by Theorems \ref{P:coverings} and \ref{T:CDexamples} (i); thus the composition is universally of cohomological descent with respect to crystals by Theorem \ref{T:topologySingle} (c). By the previous two paragraphs and Theorem \ref{T:topologySingle} (d), the left vertical map is universally of cohomological descent with respect to crystals; the lemma thus follows from Theorem \ref{T:topologySingle} (b).

\end{proof}

The next lemma lets us reduce the case of a general blow up to the situation of Lemma \ref{L:CDblowUp}.

\begin{lemma}
\label{L:blowupStructureTheorem}

Let $Y$ be a Noetherian integral scheme, let $I \subset \calO_Y$ be a sheaf of ideals globally generated by $r \geq 2$ many elements and let $X \to Y$ be the blow up of $Y$ along $I$. Then there exists a map $X' \to X$ such that the composition $X' \to Y$ factors as
\[
X' = X_{r'} \to X_{r'-1} \to \cdots \to X_i \to X_{i-1} \to \cdots \to
X_0 = Y,
\]
where each map $X_i \to X_{i-1}$ is a blow up centered at an ideal which is globally generated by two elements.

\end{lemma}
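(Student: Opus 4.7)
The plan is to dominate $X$ by an iterated blow-up along two-generated ideals, built pair by pair from the generators of $I$, and then use the universal property of $\mathrm{Bl}_I(Y)$ to obtain the map $X' \to X$. Write $I = (f_1, \ldots, f_r)$ with $f_k \in \Gamma(Y, \calO_Y)$, and enumerate the $N = \binom{r}{2}$ pairs $(i,j)$ with $1 \leq i < j \leq r$ in any order as $(i_1, j_1), \ldots, (i_N, j_N)$. I set $X_0 := Y$ and inductively
\[
X_k := \mathrm{Bl}_{(f_{i_k}, f_{j_k})\calO_{X_{k-1}}}(X_{k-1}),
\]
so each $X_k \to X_{k-1}$ is a blow-up centered at an ideal globally generated by two sections, and each $X_k$ remains Noetherian and integral by induction (as $Y$ is so and each ideal blown up is nonzero).

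Setting $X' := X_N$, by the universal property of $X = \mathrm{Bl}_I(Y)$ it suffices to check that $I\calO_{X'}$ is invertible in order to obtain a $Y$-morphism $X' \to X$. The key intermediate claim is that $(f_i, f_j)\calO_{X'}$ is invertible for every pair $(i,j)$. I would prove this by observing that at step $k$ the blow-up $X_k \to X_{k-1}$ has two standard affine charts on which $(f_{i_k}, f_{j_k})$ becomes principal, generated by $f_{i_k}$ in one chart and by $f_{j_k}$ in the other, thereby producing an explicit divisibility relation $f_{j_k} = a \cdot f_{i_k}$ (or vice versa) holding throughout the chart. This divisibility pulls back to all later $X_\ell$, and integrality of $X_\ell$ guarantees that the resulting principal pullbacks stay invertible even though blow-ups fail to be flat.

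From this, on a suitable Zariski cover of $X'$ we have, for every pair $(i,j)$, either $f_i \mid f_j$ or $f_j \mid f_i$ in the local ring of sections. Transitivity of divisibility turns this into a total preorder on $\{f_1, \ldots, f_r\}$, which after passing to associate classes becomes a total order; a minimum representative $f_m$ then divides every $f_k$ and generates $I\calO_{X'}$ on the chart. Integrality of $X'$ upgrades this local principality to invertibility, and the universal property of $\mathrm{Bl}_I(Y)$ supplies the morphism $X' \to X$ over $Y$.

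The main point requiring care is the non-flatness of blow-up morphisms, which prevents Cartier-ness of an ideal sheaf from automatically persisting under pullback by subsequent blow-ups. This is sidestepped by working with the explicit global sections $f_i, f_j$ to exhibit principal local generators of $(f_i, f_j)\calO_{X'}$ directly. A secondary subtlety is that the chart-wise pairwise divisibility a priori only defines a tournament on $\{1, \ldots, r\}$; cycles in the tournament force the members to be mutually associates (since, say, $f_2 = a f_1$, $f_3 = b f_2$, $f_1 = c f_3$ in an integral ring yields $abc = 1$), which is harmless and still yields local principality of $I\calO_{X'}$.
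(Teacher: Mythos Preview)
Your argument is correct. The construction of $X'$ as the iterated blow-up along all $\binom{r}{2}$ two-generated ideals $(f_i,f_j)$, together with the tournament/associate argument showing $I\calO_{X'}$ is locally principal and hence (by integrality and $I \neq 0$) invertible, does exactly what is needed to invoke the universal property of $\mathrm{Bl}_I(Y)$. One small point you should make explicit: you may assume each $f_i \neq 0$ (drop the zero generators; if this leaves $r \leq 1$ the statement is trivial), so that every intermediate ideal $(f_{i_k},f_{j_k})\calO_{X_{k-1}}$ is nonzero and each $X_k$ is integral and nonempty, as you assert.

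The paper itself does not give an argument here: its proof consists entirely of the citation ``This is a special case of \cite{Tsuzuki:rigidDescent}*{Lemma 3.4.4}.'' So your write-up actually supplies the details that the paper omits. Whether your specific construction (blowing up all $\binom{r}{2}$ pairs) matches Tsuzuki's is not visible from the paper, but your approach is self-contained, elementary, and transparent; the only cost is that $r' = \binom{r}{2}$ rather than the $r-1$ one might hope for from a more careful inductive scheme, which is irrelevant for the application.
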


\begin{proof}

This is a special case of \cite{Tsuzuki:rigidDescent}*{Lemma 3.4.4}.

\end{proof}

Recall that a morphism $p\colon X \to Y$ is a \defi{modification} if it is an isomorphism over a dense open subscheme of $Y$. The next proposition shows that modifications are universally of cohomological descent with respect to crystals.

\begin{proposition}
\label{P:CDmodification}
Let $p\colon X \to Y$ be a modification. Then $p$ is universally of cohomological descent with respect to crystals.

\end{proposition}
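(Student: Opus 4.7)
The plan is to dominate $p$ by a blow-up and then reduce to Lemma \ref{L:CDblowUp} via the structural result of Lemma \ref{L:blowupStructureTheorem}. First I would apply Corollary \ref{C:components} to replace $Y$ by the disjoint union of the reductions of its irreducible components, and correspondingly replace $X$ by its base change; since the resulting maps remain modifications and the statement is local on the target, this reduces the problem to the case where $Y$ is integral, and hence (since $p$ is birational and proper) where $X$ is integral as well. Next, Corollary \ref{C:zariskiLocal} together with the fact that an algebraic variety admits a locally finite cover by affine schemes of finite type permits me to shrink $Y$ and assume that it is an affine Noetherian integral scheme.

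At this point I would invoke the classical result (a consequence of the Raynaud--Gruson flattening machinery already used in the introduction) that any modification of a Noetherian integral scheme is dominated by a blow-up along a coherent sheaf of ideals: there exist a coherent ideal $I \subset \calO_Y$ and a morphism $X' \to X$ such that the composition $\pi\colon X' \to Y$ is the blow-up of $Y$ along $I$. Shrinking $Y$ further if necessary, I may assume that $I$ is globally generated by finitely many elements, and then Lemma \ref{L:blowupStructureTheorem} produces a further dominating morphism $X'' \to X'$ such that the composition $X'' \to Y$ factors as an iterated sequence
\[
X'' = X_{r} \to X_{r-1} \to \cdots \to X_0 = Y
\]
in which each step is a blow-up along an ideal globally generated by two elements.

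By Lemma \ref{L:CDblowUp} each step $X_i \to X_{i-1}$ is universally of cohomological descent with respect to crystals, so iterated application of Theorem \ref{T:topologySingle} (c) shows that $X'' \to Y$ is as well. Since this map factors through $p\colon X \to Y$, the desired descent for $p$ follows from Theorem \ref{T:topologySingle} (b). The main obstacle is the step of dominating $p$ by a blow-up; it is the only non-formal input, everything else being bookkeeping with the composition, base change, and disjoint union stability of universal cohomological descent already collected in Theorem \ref{T:topologySingle}.
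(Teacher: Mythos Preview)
Your argument is correct and follows the same overall strategy as the paper: reduce to the integral base via Corollary~\ref{C:components}, localize via Corollary~\ref{C:zariskiLocal}, reduce to a blow-up, and then feed Lemma~\ref{L:blowupStructureTheorem} into Lemma~\ref{L:CDblowUp} together with Theorem~\ref{T:topologySingle}~(b),~(c).

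The one substantive difference is in the reduction to a blow-up. The paper first invokes Chow's lemma to make $p$ projective, and then uses the classical fact (cited from \cite{liu:algebraicGeometry}*{Section~8, Theorem~1.24}) that a projective birational morphism to an integral Noetherian scheme is, Zariski-locally on the target, a blow-up. You instead argue that the modification is \emph{dominated} by a blow-up, appealing to Raynaud--Gruson flattening: the flattening blow-up $Y'\to Y$ makes the strict transform $X'\to Y'$ flat, proper, and birational, hence an isomorphism, so $Y'\to Y$ factors through $p$. Both routes are valid; the paper's is slightly more elementary (it avoids checking that flat + proper + birational over an integral base forces an isomorphism), while yours avoids the extra localization step after Chow's lemma.

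One small imprecision: your parenthetical claim that $X$ is integral once $Y$ is integral is not automatic for a modification (there could be embedded or extra components over the non-isomorphism locus). Fortunately you never use this, so the argument is unaffected; just drop the remark.
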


\begin{proof}

By Chow's lemma \cite{EGAII}*{Theorem 5.6.1} and \ref{T:topologySingle} (b), we may assume that $p$ is projective. By Corollary \ref{C:components} and Theorem \ref{T:topologySingle} (b) and (c) we may assume that $Y$ is integral, and then by \cite{liu:algebraicGeometry}*{Section 8, Theorem 1.24}, there exists an affine open cover $\{Y_i\}$ of $Y$ such that for each $i$, $Y_i\times_YX \to Y_i$ is a blow up of $Y_i$ along a closed subscheme; by Corollary \ref{C:zariskiLocal} we may thus assume that $p$ is a blow up. By the structure lemma for blow ups (Lemma \ref{L:blowupStructureTheorem}) we may reduce to the case of a codimension one blow up which is Lemma \ref{L:CDblowUp}.

\end{proof}

\subsection{Flat Covers}
\label{S:flatOverCohDescent}

In this section we prove Theorem \ref{T:mainCohDescentTheoremPrelude} (i) -- that finitely presented crystals are universally cohomologically descendable with respect to fppf (faithfully flat locally finitely presented) morphisms of schemes.

\begin{definition}
\label{D:finiteOverconvergent}

A map $(f,u) \colon (X',V') \to (X,V)$ of overconvergent varieties is said to be \defi{finite} (see \cite{leStum:site}*{Definition 3.2.3}) if, up to strict neighborhoods, $u$ is finite (see \cite{Berkovich:nonArchEtaleCoh}*{paragraph after Lemma 1.3.7}) and $u^{-1}(]X[_V) = \, ]X'[_{V'}$. Moreover, $u$ is said to be \defi{universally flat} if $u$ is quasi-finite and, locally for Grothendieck topology on $V'$ and $V$, $u$ is of the form $\calM(A') \to \calM(A)$ with $A \to A'$ flat (see \cite{Berkovich:nonArchEtaleCoh}*{Definition 3.2.5}).

\end{definition}

\begin{proposition}
\label{P:finiteDescent}

Let $(f,u)\colon (X',V') \to (X,V)$ be a finite map of overconvergent varieties and suppose that, after possibly shrinking $V'$ and $V$, $u$ is universally flat and surjective. Then $(f,u)$ is universally of cohomological descent with respect to finitely presented overconvergent crystals.

\end{proposition}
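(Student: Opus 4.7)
The plan is to reduce the statement to a question about realizations on tubes and then invoke faithfully flat descent and the vanishing of higher direct images under finite morphisms of analytic spaces.

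First, let $p_0 := (f,u)$, let $(X_n,V_n)$ denote the $(n+1)$-fold fiber product of $p_0$, and let $p_n \colon (X_n,V_n) \to (X,V)$ be the induced simplicial map. Formation of tubes commutes with fiber products, so by Definition \ref{D:finiteOverconvergent} the induced map $]f_n[_{u_n}\colon \,]X_n[_{V_n}\, \to \,]X[_V$ is again finite and, locally in the Grothendieck topology on $V$, of the form $\calM(A') \to \calM(A)$ with $A \to A'$ flat; combined with surjectivity (which is stable under base change on the target) this is faithfully flat. By Proposition \ref{P:moduleReal} and Remark \ref{R:crystalEquivalence}, a finitely presented crystal $\calF$ on $(X,V)$ corresponds to a coherent $i_X^{-1}\calO_V$-module $M = \calF_{X,V}$, and $p_n^*\calF$ has realization $]f_n[_{u_n}^\dagger M$.

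Second, I would verify the hypotheses of Corollary \ref{C:exactImpliesCD}(ii) for the augmented simplicial map $p\colon (X_\bullet,V_\bullet) \to (X,V)$ with respect to $\calF$. For the vanishing $\mathbb{R}^j p_{n*}p_n^*\calF = 0$ ($j > 0$), I would pass to realizations and use that a finite morphism of analytic spaces is affinoid-locally of the form $\calM(A') \to \calM(A)$ with $A \to A'$ a finite $A$-algebra, so $]f_n[_{u_n}$ is affine and has vanishing higher direct images on coherent sheaves (Kiehl). For exactness of $\ch r_* p^*\calF$ in positive degrees and the isomorphism $\calF \xrightarrow{\sim} \ker(\calF_0 \rightrightarrows \calF_1)$, I would check these after realization: Lemma \ref{L:baseChange} reduces the computation to, locally, the standard Amitsur complex
\[
0 \to M \to M \tensor_A A' \to M \tensor_A (A' \tensor_A A') \to \cdots,
\]
which is exact because $A \to A'$ is faithfully flat. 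Sheafifying this local exactness on $V$ (using that being faithfully flat can be checked on a Grothendieck-topology cover) gives the desired global exactness on $]X[_V$.

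Finally, since being finite, universally flat, and surjective for a morphism of overconvergent varieties is preserved under arbitrary base change in $\AN^\dagger(\calV)$ (by Lemma \ref{L:baseChange} and the definitions), the same argument applied after base change yields \emph{universal} cohomological descent with respect to finitely presented crystals.

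The main obstacle I expect is the sheafification step in the second paragraph: the hypotheses in Definition \ref{D:finiteOverconvergent} only give flatness of $A \to A'$ Grothendieck-locally on $V$ and $V'$, and surjectivity of $u$ on points, and one must carefully combine these to get that the Amitsur complex of coherent sheaves on $]X[_V$ is exact (rather than just exact after localizing). Care is also needed because $M$ is a module over $i_X^{-1}\calO_V$ rather than over $\calO_V$, but since $]X[_V$ sits as a locally closed subspace and the maps on tubes respect the restriction, this is a mild bookkeeping issue rather than a substantive difficulty.
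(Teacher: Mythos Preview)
Your approach is essentially the same as the paper's: reduce to Corollary \ref{C:exactImpliesCD}, verify the vanishing of higher direct images via finiteness, and verify exactness of the \v{C}ech complex via faithfully flat descent on realizations. Two points where the paper is sharper than your sketch are worth noting. First, the concern you flag at the end---that $M = \calF_{X,V}$ is an $i_X^{-1}\calO_V$-module rather than an $\calO_V$-module---is not mere bookkeeping: the paper resolves it by invoking \cite{leStum:site}*{Proposition 2.2.10} to (after shrinking $V$) write $F_{X,V} \cong i_X^{-1}G$ for some $G \in \Coh \calO_V$, and then computes everything with $G$ on $V$ (using the identity $\mathbb{R}^q]u_i[_* ]u_i[^* F_{X,V} = i_X^{-1}\mathbb{R}^q u_{i*} u_i^* G$), so that Berkovich's theorems on analytic spaces apply directly. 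Second, rather than sheafifying the local Amitsur complex by hand, the paper invokes \cite{Berkovich:nonArchEtaleCoh}*{Proposition 4.1.2} (coherent sheaves are sheaves for the flat quasi-finite topology) together with Theorem \ref{T:CDexamples} (i), and then uses goodness of $(X,V)$ to conclude exactness in the ordinary topology; this cleanly handles the globalization step you worried about.
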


\begin{proof}

To ease notation we set $p := (f,u)$. Let $F \in \Mod_{\fp}(X,V)$. By Corollary \ref{C:exactImpliesCD}, it suffices to prove that (i) $\mathbb{R}^qp_{\bullet,*}p_{\bullet}^*F = 0$ for $q > 0$, and (ii) the \v{C}ech complex $F \to p_{\bullet,*}p_{\bullet}^*F$ is exact. Let $p_i := (f_i,u_i)\colon (X_i,V_i) \to (X,V)$ be the $i$-fold fiber product of $p$; $p_i$ also satisfies the hypotheses of this proposition. By the spectral sequence (Equation \ref{spectralSequence}), it suffices to prove that $\mathbb{R}^qp_{i*}p_i^*F = 0$ for $ i \geq 0$ and $q > 0$.

Shrink $V$ and $V_i$ such that $u_i$ is finite and such that $F_{X,V}$ is isomorphic to $i_X^{-1}G$ for some $G \in \Coh \calO_V$ (which is possible by \cite{leStum:site}*{Proposition 2.2.10}). To prove (i), one can work with realizations as in \cite{leStum:site}*{Proof of Proposition 3.2.4}; it thus suffices to prove that $\mathbb{R}^q]u_i[_*]u_i[^*F_{X,V} = 0$ for $q > 0$. Then $\mathbb{R}^q]u_i[_*]u_i[^*F_{X,V} = i_X^{-1}\mathbb{R}^qu_{i*}u_i^*G$; by \cite{Berkovich:nonArchEtaleCoh}*{Corollary 4.3.2} $\mathbb{R}^qu_{i*}u_i^*G = 0$ and (i) follows. 

For (ii), since one can check exactness of a complex of abelian sheaves on the collection of all good realizations and since our hypotheses are stable under base change, it suffices to prove that the \v{C}ech complex of $F_{X,V}$ with respect to $]u[$ is exact. Since $i_X^{-1}$ is exact, it suffices to prove that the \v{C}ech complex of $G$ with respect to $u$ is exact.

By \cite{Berkovich:nonArchEtaleCoh}*{Proposition 4.1.2}, $G$ is a sheaf in the flat quasi-finite topology, so by Theorem \ref{T:CDexamples} (i), $G \to u_{\bullet,*}u^*_{\bullet}G$ is exact in the flat quasi-finite topology; since $G$ is coherent and $(X,V)$ is good, this is exact in the usual topology.


\end{proof}

 Recall that a \defi{monogenic} map of rings is a map of the form $A \to   A[t]/f(t)$, where $f \in A[t]$ is a monic polynomial, and a map of   affine formal schemes is said to be monogenic if the associated map   on rings is monogenic.

\begin{proof}[Proof of Theorem \ref{T:mainCohDescentTheoremPrelude} (i)]

By Theorem \ref{T:zariskiDescent} and Corollary \ref{C:BCD}, we may assume that everything is affine.
\vspace{4pt}

Step 0: (Reduction to the finite and locally free case). Let $p\colon X \to Y$ be an fppf cover. By \cite{stacks-project}*{Lemma \href{http://math.columbia.edu/algebraic_geometry/stacks-git/locate.php?tag=05WN}{05WN}}, there exists a map $X' \to X$ such that the composition $X' \to Y$ is a composition of surjective finite locally free morphisms and Zariski coverings; by Theorem \ref{T:zariskiDescent} and Corollary \ref{C:BCD}, we may assume that $X \to Y$ is finite and locally free.
\vspace{5pt}

Step 1: (Monogenic case). Suppose that $X \to Y$ is monogenic and choose a closed embedding $Y \hookrightarrow \A^n_{\calV}$ (which exists since $Y$ is affine and of finite type) and then an open immersion $\A^n_{\calV}  \subset P := {\mathbb{P}}^n_{\calV}$. The polynomial defining $X \to Y$ lifts to a monic polynomial with coefficients, giving a monogenic (and thus finite and flat) map $X_0 \to \A^n_{\calV}$, and then homogenizing this polynomial gives a map $\pi\colon P' \to P$ of schemes over $\Spec \calV$ and an embedding $X \hookrightarrow P'$ which is compatible with the embedding $Y \hookrightarrow P$. The map $\pi$ may not be finite or flat (see Remark \ref{R:notFlat} below), but (noting that $\pi$ is projective) by \cite{GrusonR:flatification}*{Th\'eor\`em 5.2.2}, there exists a modification $\widetilde{P} \to P$, centered away from $X$,  such that the strict transform $\widetilde{P'} \to \widetilde{P}$ of $P' \to P$ is flat and (since it is generically finite, flat, and proper) finite.

Replacing $P' \to P$ with the formal completion of $\widetilde{P'} \to \widetilde{P}$, we thus have a finite flat map $P' \to P$ of formal schemes and an embedding $X \hookrightarrow P'$ which is compatible with the embedding $Y \hookrightarrow P$. Consider the diagram
\[
\xymatrix{
(X, P'_K) \ar[r]\ar[d] &
X\ar[d] \\
(Y, P_K) \ar[r]&
Y}.
\]
By Theorems \ref{P:coverings} and \ref{T:CDexamples} (i), $(Y, P_K) \to Y$ is universally of cohomological descent with respect to crystals, so by Corollary \ref{C:BCD} it suffices to prove that $(X,P'_K) \to (Y,P_K)$ is universally of cohomological descent with respect to finitely presented crystals. Since $X = Y \times_P P'$, $(X,P'_K) \to (Y,P_K)$ satisfies the hypotheses of Proposition \ref{P:finiteDescent} and step 1 follows.
\vspace{7pt}

Step 2: (Base extension). Now let $k \subset k'$ be a finite field extension of the residue
field. We claim that it suffices to check that $X_{k'} \to Y_{k'}$ is universally of cohomological descent with respect to finitely
presented crystals. Indeed, let $k = k_0 \subset k_1 \subset \ldots \subset
k_{n-1} \subset k_n = k'$ be a sequence of field extensions such that
$k_i = k_{i-1}(\alpha_i)$ for some $\alpha_i \in k_i$ (note that one may not be able to choose $n
= 1$ since $k \subset k'$ may not be separable). Consider the diagram
\[
\xymatrix
{
X_{k_n} \ar[rrrr] \ar[d] &&&& X \ar[d] \\
Y_{k_n} \ar[r] & Y_{k_{n-1}} \ar[r] & \cdots \ar[r] & Y_{k_{1}} \ar[r]
&Y_{k_0}
}.
\]
Each map $Y_{k_i} \to Y_{k_{i-1}}$ is monogenic and thus universally of cohomological descent with respect to finitely presented crystals, the claim thus follows from Theorem \ref{T:topologySingle} (b), (c), and (d).
\vspace{7pt}

Step 3: (Reduction to the monogenic case). Let $\kbar$ be the algebraic closure of $k$ and let $p_{\kbar}\colon X_{\kbar} \to Y_{\kbar}$ be the base change of $p$ to $\kbar$. Let $x \in X_{\kbar}$ be a closed point and set $y = p_{\kbar}(x)$. Let $\kbar(x)$ and $\kbar(y)$ denote the residue fields of $x$ and $y$; since $\kbar$ is algebraically closed, $\kbar(x) = \kbar(y)$. In particular, $\kbar(y)$ is a separable extension of $\kbar(x)$, and thus, by the argument of \cite{BoschLR:Neron}*{2.3, Proposition 3}, there exists a (generally non-cartesian) commutative diagram
\[\xymatrix{
X_x \ar[r] \ar[d]& X_{\kbar} \ar[d] \\
Y_y \ar[r] & Y_{\kbar}
}
\]
where $X_x$ (resp. $Y_y$) is an affine open neighborhood of $x$ (resp. $y$) and $X_x \to Y_y$ is monogenic. By quasi-compactness of $Y_{\kbar}$, there thus exists a (generally non-cartesian) commutative diagram
\[\xymatrix{
\coprod X_i \ar[r] \ar[d]^{\coprod f_i} & X_{\kbar} \ar[d] \\
\coprod Y_i \ar[r] & Y_{\kbar}
}
\]
such that $\{Y_i\}$ is a finite cover of $Y_{\kbar}$ by affine open subschemes of finite type over $\kbar$, $X_i$ is an affine open subscheme of $X_{\kbar}$, and each map $f_i\colon X_i \to Y_i$ is monogenic. Since the covering is finite, there exists a finite field extension $k \subset k'$ and a (generally non-cartesian) commutative diagram
\[\xymatrix{
\coprod X'_i \ar[r] \ar[d]^{\coprod f_i} & X_{k'} \ar[d] \\
\coprod Y'_i \ar[r] & Y_{k'}
}
\]
with the same properties. By step 2, it suffices to check that $X_{k'} \to Y_{k'}$ is universally of cohomological descent with respect to finitely presented crystals. By Corollary \ref{C:BCD}, it suffices to prove this for each $i$, the map $X'_i \to Y'_i$, which follows from step 1.

\end{proof}

\begin{remark}
\label{R:notFlat}
The modification in step 1 of the proof is necessary. Indeed, the monogenic map $X \to \A^2$ given by $t^2 + x_1x_2t + x_1 + x_2$ homogenizes to the map $X' \to \P^2$ given by $x_0^2t^2 + x_1x_2ts + (x_1 + x_2)x_0s^2$ which is not flat, since it is generically quasi-finite but not quasi-finite (since the fiber over $x_0 = x_1 = 0$ is $\P^1$).

\end{remark}

\subsection{Proper surjections}
\label{S:properOverCohDescent}

The proper case of the main theorem will now follow from Chow's lemma and the Raynaud-Gruson theorem on `Flattening Blow Ups'.

\begin{proof}[Proof of Theorem \ref{T:mainCohDescentTheoremPrelude} (ii)]
\label{p:mainTheoremProper}

Let $p\colon X \to Y$ be a proper surjection. By Chow's lemma \cite{EGAII}*{Theorem 5.6.1} and Theorem \ref{T:topologySingle} (b), we may assume that $p$ is projective. By Corollary \ref{C:zariskiLocal} we may assume that $Y$ is affine and thus by \cite{GrusonR:flatification}*{Th\'eor\`em 5.2.2}, there exists a modification $Y' \to Y$ such that the strict transform $X' \to Y'$ is flat. By Theorem \ref{T:mainCohDescentTheoremPrelude} (i) (resp. \ref{P:CDmodification}) $X' \to Y'$ (resp. $Y' \to Y$) is universally of cohomological descent with respect to finitely presented crystals. By \ref{T:topologySingle} (c), the composition $X' \to Y$ is universally of cohomological descent, and the proper case of the main theorem follows from \ref{T:topologySingle} (b).

\end{proof}

\begin{bibdiv}
\begin{biblist}

\bib{Berthelot:rigidFirst}{article}{
      author={Berthelot, Pierre},
       title={G\'eom\'etrie rigide et cohomologie des vari\'et\'es
  alg\'ebriques de caract\'eristique {$p$}},
        date={1986},
        ISSN={0037-9484},
     journal={M\'em. Soc. Math. France (N.S.)},
      number={23},
       pages={3, 7\ndash 32},
        note={Introductions aux cohomologies $p$-adiques (Luminy, 1984)},
      review={\MR{MR865810 (88a:14020)}},
}

\bib{Berkovich:nonArchEtaleCoh}{article}{
      author={Berkovich, Vladimir~G.},
       title={\'{E}tale cohomology for non-{A}rchimedean analytic spaces},
        date={1993},
        ISSN={0073-8301},
     journal={Inst. Hautes \'Etudes Sci. Publ. Math.},
      number={78},
       pages={5\ndash 161 (1994)},
         url={http://www.numdam.org/item?id=PMIHES_1993__78__5_0},
      review={\MR{MR1259429 (95c:14017)}},
}

\bib{Berkovich:vanishingFormalI}{article}{
      author={Berkovich, Vladimir~G.},
       title={Vanishing cycles for formal schemes},
        date={1994},
        ISSN={0020-9910},
     journal={Invent. Math.},
      volume={115},
      number={3},
       pages={539\ndash 571},
         url={http://dx.doi.org/10.1007/BF01231772},
      review={\MR{MR1262943 (95f:14034)}},
}

\bib{berthelot:dualite}{article}{
      author={Berthelot, Pierre},
       title={Dualit\'e de {P}oincar\'e et formule de {K}\"unneth en
  cohomologie rigide},
        date={1997},
        ISSN={0764-4442},
     journal={C. R. Acad. Sci. Paris S\'er. I Math.},
      volume={325},
      number={5},
       pages={493\ndash 498},
         url={http://dx.doi.org/10.1016/S0764-4442(97)88895-7},
      review={\MR{MR1692313 (2000c:14023)}},
}

\bib{berthelot:finitude}{article}{
      author={Berthelot, Pierre},
       title={Finitude et puret\'e cohomologique en cohomologie rigide},
        date={1997},
        ISSN={0020-9910},
     journal={Invent. Math.},
      volume={128},
      number={2},
       pages={329\ndash 377},
         url={http://dx.doi.org/10.1007/s002220050143},
        note={With an appendix in English by Aise Johan de Jong},
      review={\MR{MR1440308 (98j:14023)}},
}

\bib{Berkovich:contractiblity}{article}{
      author={Berkovich, Vladimir~G.},
       title={Smooth {$p$}-adic analytic spaces are locally contractible},
        date={1999},
        ISSN={0020-9910},
     journal={Invent. Math.},
      volume={137},
      number={1},
       pages={1\ndash 84},
         url={http://dx.doi.org/10.1007/s002220050323},
      review={\MR{MR1702143 (2000i:14028)}},
}

\bib{BGR}{book}{
      author={Bosch, S.},
      author={G{\"u}ntzer, U.},
      author={Remmert, R.},
       title={Non-{A}rchimedean analysis},
      series={Grundlehren der Mathematischen Wissenschaften [Fundamental
  Principles of Mathematical Sciences]},
   publisher={Springer-Verlag},
     address={Berlin},
        date={1984},
      volume={261},
        ISBN={3-540-12546-9},
        note={A systematic approach to rigid analytic geometry},
      review={\MR{MR746961 (86b:32031)}},
}

\bib{BoschLR:Neron}{book}{
      author={Bosch, Siegfried},
      author={L{\"u}tkebohmert, Werner},
      author={Raynaud, Michel},
       title={N\'eron models},
      series={Ergebnisse der Mathematik und ihrer Grenzgebiete (3) [Results in
  Mathematics and Related Areas (3)]},
   publisher={Springer-Verlag},
     address={Berlin},
        date={1990},
      volume={21},
        ISBN={3-540-50587-3},
      review={\MR{MR1045822 (91i:14034)}},
}

\bib{Brown:RigidStacks}{article}{
      author={Brown, David},
       title={Rigid cohomology of algebraic stacks},
        date={2010},
     journal={Thesis, UC Berkeley},
}

\bib{Conrad:cohDescent}{article}{
      author={Conrad, Brian},
       title={Cohomological descent},
         url={http://math.stanford.edu/~conrad/papers/hypercover.pdf},
}

\bib{ChiarellottoT:etaleRigidDescent}{article}{
      author={Chiarellotto, Bruno},
      author={Tsuzuki, Nobuo},
       title={Cohomological descent of rigid cohomology for \'etale coverings},
        date={2003},
        ISSN={0041-8994},
     journal={Rend. Sem. Mat. Univ. Padova},
      volume={109},
       pages={63\ndash 215},
      review={\MR{MR1997987 (2004d:14016)}},
}

\bib{DavisLZ:Rham}{article}{
      author={Davis, Christopher},
      author={Langer, Andreas},
      author={Zink, Thomas},
       title={Overconvergent de {R}ham-{W}itt cohomology},
        date={2011},
        ISSN={0012-9593},
     journal={Ann. Sci. \'Ec. Norm. Sup\'er. (4)},
      volume={44},
      number={2},
       pages={197\ndash 262},
      review={\MR{2830387 (2012j:14029)}},
}

\bib{MeD:RhamWittStacks}{article}{
      author={Davis, Chris},
      author={Zureick-Brown, David},
       title={Overconvergent de {R}ham-{W}itt cohomology for stacks},
     journal={in preparation},
}

\bib{Deligne:Hodge3}{article}{
      author={Deligne, Pierre},
       title={Th\'eorie de {H}odge. {III}},
        date={1974},
        ISSN={0073-8301},
     journal={Inst. Hautes \'Etudes Sci. Publ. Math.},
      number={44},
       pages={5\ndash 77},
      review={\MR{MR0498552 (58 \#16653b)}},
}

\bib{EGAI}{article}{
label={EGA I},
      author={Grothendieck, A.},
       title={\'{E}l\'ements de g\'eom\'etrie alg\'ebrique. {I}. {L}e langage
  des sch\'emas},
        date={1960},
        ISSN={0073-8301},
     journal={Inst. Hautes \'Etudes Sci. Publ. Math.},
      number={4},
       pages={228},
      review={\MR{MR0163908 (29 \#1207)}},
}

\bib{EGAII}{article}{
label={EGA II},
      author={Grothendieck, A.},
       title={\'{E}l\'ements de g\'eom\'etrie alg\'ebrique. {II}. \'{E}tude
  globale \'el\'ementaire de quelques classes de morphismes},
        date={1961},
        ISSN={0073-8301},
     journal={Inst. Hautes \'Etudes Sci. Publ. Math.},
      number={8},
       pages={222},
      review={\MR{MR0163909 (29 \#1208)}},
}

\bib{Hartshorne:AG}{book}{
      author={Hartshorne, Robin},
       title={Algebraic geometry},
   publisher={Springer-Verlag},
     address={New York},
        date={1977},
        ISBN={0-387-90244-9},
        note={Graduate Texts in Mathematics, No. 52},
      review={\MR{MR0463157 (57 \#3116)}},
}

\bib{kedlaya:finitenessCoefficients}{article}{
      author={Kedlaya, Kiran~S.},
       title={Finiteness of rigid cohomology with coefficients},
        date={2006},
        ISSN={0012-7094},
     journal={Duke Math. J.},
      volume={134},
      number={1},
       pages={15\ndash 97},
         url={http://dx.doi.org/10.1215/S0012-7094-06-13412-9},
      review={\MR{MR2239343 (2007m:14021)}},
}

\bib{liu:algebraicGeometry}{book}{
      author={Liu, Qing},
       title={Algebraic geometry and arithmetic curves},
      series={Oxford Graduate Texts in Mathematics},
   publisher={Oxford University Press},
     address={Oxford},
        date={2002},
      volume={6},
        ISBN={0-19-850284-2},
        note={Translated from the French by Reinie Ern{\'e}, Oxford Science
  Publications},
      review={\MR{MR1917232 (2003g:14001)}},
}

\bib{leStum:rigidBook}{book}{
      author={le~Stum, Bernard},
       title={Rigid cohomology},
      series={Cambridge Tracts in Mathematics},
   publisher={Cambridge University Press},
     address={Cambridge},
        date={2007},
      volume={172},
        ISBN={978-0-521-87524-0},
      review={\MR{MR2358812 (2009c:14029)}},
}

\bib{leStum:site}{article}{
      author={le~Stum, Bernard},
       title={The overconvergent site},
        date={2010},
     journal={M\'em. Soc. Math. Fr.},
}

\bib{leStum:constructibleNabla}{misc}{
      author={le~Stum, Bernard},
       title={Constructible nabla-modules on curves},
        date={2012},
}

\bib{Olsson:Crystal}{book}{
      author={Olsson, Martin~C.},
       title={Crystalline cohomology of algebraic stacks and {H}yodo-{K}ato
  cohomology},
        date={2007},
      number={316},
        ISBN={978-2-85629-249-5},
      review={\MR{MR2451400}},
}

\bib{GrusonR:flatification}{article}{
      author={Raynaud, Michel},
      author={Gruson, Laurent},
       title={Crit\`eres de platitude et de projectivit\'e. {T}echniques de
  ``platification'' d'un module},
        date={1971},
        ISSN={0020-9910},
     journal={Invent. Math.},
      volume={13},
       pages={1\ndash 89},
      review={\MR{0308104 (46 \#7219)}},
}

\bib{SGA4:I}{book}{
label={SGA4, I},
       title={Th\'eorie des topos et cohomologie \'etale des sch\'emas. {T}ome
  1: {T}h\'eorie des topos},
      series={Lecture Notes in Mathematics, Vol. 269},
   publisher={Springer-Verlag},
     address={Berlin},
        date={1972},
}

\bib{SGA4:II}{book}{
label={SGA4, II},
       title={Th\'eorie des topos et cohomologie \'etale des sch\'emas. {T}ome
  2},
      series={Lecture Notes in Mathematics, Vol. 270},
   publisher={Springer-Verlag},
     address={Berlin},
        date={1972},
}

\bib{stacks-project}{article}{
  label={STACKS},
      author={Authors, The Stacks~Project},
       title={{\it Stacks Project}},
         how={\url{http://math.columbia.edu/algebraic_geometry/stacks-git}},
}

\bib{Tsuzuki:properRigidDescent}{article}{
      author={Tsuzuki, Nobuo},
       title={Cohomological descent of rigid cohomology for proper coverings},
        date={2003},
        ISSN={0020-9910},
     journal={Invent. Math.},
      volume={151},
      number={1},
       pages={101\ndash 133},
      review={\MR{MR1943743 (2004b:14031)}},
}

\bib{Tsuzuki:rigidDescent}{incollection}{
      author={Tsuzuki, Nobuo},
       title={Cohomological descent in rigid cohomology},
        date={2004},
   booktitle={Geometric aspects of {D}work theory. {V}ol. {I}, {II}},
   publisher={Walter de Gruyter GmbH \& Co. KG, Berlin},
       pages={931\ndash 981},
      review={\MR{MR2099093 (2005g:14041)}},
}

\end{biblist}
\end{bibdiv}

\bibliographystyle{alpha}

\end{document}